\newcommand{\R}{{\mathbb R}}
\newcommand{\tr}{\operatorname{tr}}
\newcommand{\ren}{{\mathbb R}^N}
\newcommand{\N}{{\mathbb N}}
\newcommand{\be}[1]{\begin{equation}\label{#1}}
\newcommand{\ee}{\end{equation}}
\renewcommand{\(}{\left(}
\renewcommand{\)}{\right)}
\newcommand{\prf}{\par\smallskip\noindent{\sl Proof. \/}}
\newcommand{\finprf}{\unskip\null\hfill$\;\square$\vskip 0.3cm}
\newenvironment{proof}{\prf}{\finprf}
\newtheorem{theorem}{Theorem}[section]
\newtheorem{lemma}{Lemma}[section]
\newtheorem{corollary}[theorem]{Corollary}
\newtheorem{proposition}[theorem]{Proposition}
\newtheorem{remark}[theorem]{Remark}
\newtheorem{definition}{Definition}[section]
\numberwithin{equation}{section}
\newcommand{\Dom}{\operatorname{Dom}}
\def\qed{\,\unskip\kern 6pt \penalty 500
\raise -2pt\hbox{\vrule \vbox to8pt{\hrule width 6pt
\vfill\hrule}\vrule}\par}
\definecolor{darkblue}{rgb}{0.05, .05, .65}
\definecolor{darkgreen}{rgb}{0.1, .65, .1}
\definecolor{darkred}{rgb}{0.8,0,0}
\begin{document}
\title{\textbf{ Symmetrization for fractional Neumann problems}\\[7mm]}

\author{\Large Bruno Volzone\footnote{Dipartimento per le Tecnologie, Facolt\`a di Ingegneria, Università degli Studi di
Napoli ``Parthenope'', 80143  Italia. \    E-mail: {\tt bruno.volzone@uniparthenope.it}} }

\date{} %%  this cancels date in article format

\maketitle

\begin{abstract}
In this paper we complement the program concerning the application of symmetrization methods to nonlocal PDEs by providing new estimates, in the sense of mass concentration comparison, for solutions to linear fractional elliptic and parabolic PDEs with Neumann boundary conditions. These results are achieved by employing suitable symmetrization arguments to the Stinga-Torrea local extension problems, corresponding to the fractional boundary value problems considered. Sharp estimates are obtained first for elliptic equations and a certain number of consequences in terms of regularity estimates is then established. Finally, a parabolic symmetrization result is covered as an application of the elliptic concentration estimates in the implicit time discretization scheme.
\end{abstract}

\

\setcounter{page}{1}
%%%%%%%%%%%%%%%%%%%%%%%%%%%%%%%%%%%%%%%%%%%%%%%%%%%%%%%%%%%%%%%%%
\section{Introduction}\label{sec.intro}
Following the study initiated in the work \cite{dBVol} and continued in \cite{VazVol1}, \cite{VazVol2}, \cite{VazVolSire},  the spirit of this note is to
provide
a further insight on applications of classical symmetrization techniques to PDEs involving fractional Laplacian operators. In particular, we will focus on
these
methods with the aim of deriving optimal estimates, in the sense of mass concentration comparisons and their consequences, for solutions of nonlocal
elliptic
PDEs with \emph{Neumann} boundary conditions, of the type
\begin{equation} \label{eq.0}
\left\{
\begin{array}
[c]{lll}%
\left(  -\Delta\right)^{\sigma}u+cu=f\left(  x\right)   &  & in\text{ }%
\Omega,\\[10pt]
\dfrac{\partial u}{{\partial\nu}}=0 &  & on\text{ }\partial\Omega,
\end{array}
\right. %
\end{equation}
for all the exponents $\sigma\in(0,1)$.
Problem \eqref{eq.0} is posed in a smooth domain $\Omega$ of $\R^{N}$ ($N\geq2$), $\nu$ is the outward unit normal vector to $\partial \Omega$, $c$ is a
nonnegative constant and the source term $f=f(x)$ is assumed to belong (for instance) to $L^{p}(\Omega)$ for suitable $p>1$. When $c=0$, we will require the
natural compatibility condition
\begin{equation}
\int_{\Omega}f dx=0.\label{compatib}
\end{equation}
Using the results we shall achieve in the elliptic framework, we will determine also a comparison result for solutions to \emph{Cauchy-Neumann} linear
parabolic
problems of the form
\begin{equation}\label{linearparabintr}
\left\{
\begin{array}
[c]{lll}%
u_{t}+(-\Delta)^{\sigma}u =f &  & in\text{
}\Omega\times(0,T)\\[15pt]
\dfrac{\partial u}{\partial\nu}=0 &  & on\text{ }\partial \Omega\times[0,T],\\[15pt]
u(x,0)=u_{0}(x)   & &
in\text{ }\Omega,
\end{array}
\right.  %
\end{equation}
being $T>0$ and the data $f=f(x,t)$, $u_{0}=u_{0}(x)$ belonging to suitable functional spaces.\\

\noindent The first application of symmetrization techniques to linear Neumann elliptic problems goes back to the classical paper by Maderna-Salsa
\cite{MadSalsa}. Let us briefly describe the main result achieved in their paper. Let us consider a second order linear elliptic Neumann
problem of
the form
\begin{equation} \label{SalsaNeum}
\left\{
\begin{array}
[c]{lll}%
Lu=f   &  & in\text{ }%
\Omega,\\[10pt]
\dfrac{\partial u}{\partial\nu}=0 &  & on\text{ }\partial\Omega,
\end{array}
\right. %
\end{equation}
where
\begin{equation}
Lu=-\sum_{i,j} \partial_i(a_{ij}\partial_j u)\,,\label{ellipticop}
\end{equation}
posed in a smooth bounded domain $\Omega\subseteq \ren$;  the coefficients $\{a_{ij}\}$ are assumed to be bounded, measurable and satisfy the usual normalized
ellipticity condition
\begin{equation}
\sum_{i,j}a_{ij}(x)\xi_{i}\xi_{j}\geq|\xi|^{2}\quad \text{for a.e. }x\in\Omega,\,\forall\xi\in\R^{N};\label{elliptcond}
\end{equation}
finally, we impose the compatibility condition \eqref{compatib}. The absence of homogeneous Dirichlet boundary condition prevents us from using some features of the classical
analysis
introduced by Talenti \cite{Talenti1}, leading to pointwise comparison between the symmetrized version of the actual solution of the problem $u(x)$ and
the radially symmetric solution $v(|x|)$ of some radially symmetric model problem which is posed in a ball with the same volume as $\Omega$: indeed, a
rough
explanation of such an issue is that the Neumann boundary conditions imply that the level sets $\left\{x\in\Omega:|u(x)|>t\right\}$ of a solution $u$ are not
compactly contained in $\Omega$ (as for the zero Dirichlet data case), thus a part of the boundary of such sets may be contained in $\partial\Omega$. This
forces the
use of the \emph{relative} isoperimetric inequality, saying that for any measurable subset $E$ of $\Omega$ one has
\begin{equation}
\left[\min\left\{|E|,|\Omega\setminus E|\right\}\right]^{1-\frac{1}{N}}\leq Q P_{\Omega}(E)\label{relisop}
\end{equation}
being $\left\vert
E\right\vert $ the $N$-dimensional Lebesgue measure of $E$, $P_{\Omega}(E)$ the perimeter (in the De Giorgi sense) of $E$ in $\Omega$ and  the best value
of the
constant $Q$ in \eqref{relisop} depends on $\Omega$. Then, the choice of the classical truncation functions introduced in \cite{Talenti1} as test functions in the weak formulation
of
\eqref{SalsaNeum} leads to the issue of choosing the minimum value in \eqref{relisop}, being $E$ the level set $\left\{x\in\Omega:|u(x)|>t\right\}$ for
some
$t\geq0$. A key role for solving such problem is played by the so called \emph{median} $\mathsf{m}(u)$ defined by
\begin{equation}
\mathsf{m}(u)=\inf\left\{k\in\R:|\left\{x\in\Omega:u(x)>k\right\}|\leq |\Omega|/2\right\}:\label{median}
\end{equation}
indeed, the function $u_{1}=u-\mathsf{m}(u)$ is still a solution to the homogeneous Neumann problem \eqref{SalsaNeum}, and its remarkable property relies in
the
fact that
\[
|\text{sprt }u_{1}^{+}|\leq|\Omega|/2\,\quad |\text{sprt }u_{1}^{-}|\leq|\Omega|/2,
\]
where $u_{1}^{+}$, $u_{1}^{-}$ are the positive and the negative parts of $u_{1}$ respectively. Then testing the first equation in \eqref{SalsaNeum} with
the test
functions of \cite{Talenti1} constructed on the level sets of $u_{1}^{\pm}$ gives that the minimum in \eqref{relisop} is achieved by $|E|$, where
$E=\left\{x\in\Omega: u_{1}^{\pm}(x)>t\right\}$: then the classical method shown in \cite{Talenti1} takes naturally to the following two pointwise estimates in the
ball
$B$ centered at the origin such that $|B|=|\Omega|/2$:
\begin{equation}
(u_{1}^{+})^{\#}(x)\leq v_{1}(x)\label{firstpointest}
\end{equation}
\begin{equation}
(u_{1}^{-})^{\#}(x)\leq v_{2}(x),\label{secondpointest}
\end{equation}
 where $(u_{1}^{\pm})^{\#}$ is the \emph{Schwarz} decreasing rearrangement of $(u_{1}^{\pm})$ (see Section 2 for its precise definition and related
 properties)
 and $v_{i}$, $i=1,2$ is the solution of the \emph{Dirichlet} problem
\begin{equation*} \label{DirichNeum}
\left\{
\begin{array}
[c]{lll}%
-\gamma\Delta v_{i}=f_{i}^{\#}\left(  x\right)   &  & in\text{ }%
B,\\[10pt]
v_{i}=0 &  & on\text{ }\partial B,
\end{array}
\right. %
\end{equation*}
where $f_{1}$, $f_{2}$ are the positive and negative part of $f$ respectively and $\gamma=1/(N\omega_{N}^{1/N}Q)^{2}$, with $\omega_{N}$ being the measure of the unit ball in $\R^{N}$, which implies that $N\omega_{N}^{1/N}$
is the best constant in the classical isoperimetric inequality.
The meaningful result represented by estimates \eqref{firstpointest}-\eqref{secondpointest} shows on one hand that there is no ``worst'' problem
(unlike the
Dirichlet case) among the class of Neumann problems, defined by fixing the distribution function of $f$ and the measure of the ground domain, which problem
\eqref{SalsaNeum} could be compared to, in the sense of a pointwise comparison; on the other hand, the same inequalities  \eqref{firstpointest}-\eqref{secondpointest}, along with the estimates with
sharp
constants obtained for the Dirichlet problem in \cite{Talenti1}, easily allow to derive some optimal regularity estimates for the original solution $u$ in
terms
of the data $f$.\\

\noindent Since the paper \cite{MadSalsa}, many other works dealing with symmetrization in Neumann problems enriched the already existing literature with
interesting developments and sharper results: among them, it is worth mentioning the contributions given in \cite{AlTrMat}, \cite{Ferone},
\cite{BettaNeum},
\cite{FeroneMercaldoStein} for the linear case, in the paper \cite{AlCianMAz} dealing with the nonlinear elliptic framework. Furthermore,
we
refer to \cite{Bramanti} for an interesting treatment of the linear parabolic case.

\noindent Proceeding as in the papers \cite{dBVol}, \cite{VazVol1}, \cite{VazVol2}, \cite{VazVolSire}, due to the nonlocal nature of problem \eqref{eq.0},
it
will be essential to link such a problem to a suitable, local \emph{extension problem}, whose solution $w(x,y)$, a \emph{harmonic extension} of $u$, is
defined
on the infinite cylinder $\mathcal{C}_{\Omega}=\Omega\times(0,\infty)$, to which classical symmetrization techniques (with respect to the $x\in\Omega$-
variable)
can be applied: the issues arising in this approach will be mainly due to the Neumann boundary conditions and the presence of the ``extra'' variable
$y\geq0$,
which is fixed in the symmetrization arguments, an important detail implying that reaching a pointwise comparison is hopeless. Then an integral (or mass
concentration) comparison is expected, and since $u$ is the trace of $w$ over $\Omega\times\left\{0\right\}$, once we obtain a comparison result for the
extension $w$ of $u$, an estimate for $u$ is immediately derived.\\[1pt]
\noindent We notice that problem \eqref{eq.0} can be rewritten as
\[
(-\Delta_{N})^{\sigma}u+cu=f
\]
where the operator $(-\Delta_{N})^{\sigma}$ is the so called \emph{spectral Neumann fractional Laplacian}, whose definition and domain encode in particular
the
homogeneous Neumann boundary conditions: then the existence of an extension problem, associated to $(-\Delta_{N})^{\sigma}$,  with zero Neumann boundary
conditions
on the lateral surface $\partial_{L}\mathcal{C}_{\Omega}$ of the cylinder $\mathcal{C}_{\Omega}$, follows by
\cite[Theorem 1.1]{Stinga-Torrea},
generalizing the by now classical result by Caffarelli and Silvestre \cite{Caffarelli-Silvestre}. We also refer to \cite{Gale-Miana-Stinga} for a result of this nature in an even more general setting.

\medskip

\noindent {\sc Organization of the paper and main results.} Section \ref{Sec2} contains the preliminaries about symmetrization and mass
concentration that we will largely use throughout the paper. In Section \ref{Sec3} we give all the necessary functional background related to problem \eqref{eq.0}, which is
naturally connected to the very definition of the
operator $(-\Delta_{N})^{\sigma}$. Section \ref{Sec4} is entirely devoted to the introduction and the proof of the main result, consisting in comparing the solution
$u$ to
\eqref{eq.0} with $c=0$ to the following Dirichlet radial problem
\begin{equation*} \label{symmetriz}
\left\{
\begin{array}
[c]{lll}%
\left(  -\gamma\Delta\right)^{\sigma}v=f_{1}^{\#}(x)+f_{2}^{\#}\left(  x\right)   &  & in\text{ }%
B,\\ [10pt]
v=0  &  & on\text{ }\partial B
\end{array}
\right. %
\end{equation*}
where the operator $(-\Delta)^{\sigma}$ is the so called \emph{spectral Dirichlet fractional Laplacian} $(-\Delta_{D})^{\sigma}$, and  $f_{1}$, $f_{2}$ are the
positive and negative part of $f$ respectively. Moreover, making use of some results of \cite{dBVol}, a number of important
regularity estimates of $u$ in terms of the data $f$ are then derived. Section \ref{zeroordersec} provides the generalization of the comparison result shown in Section \ref{Sec4}
for problems appearing in the form
\eqref{eq.0} with a positive constant $c$. This last result is applied in Section \ref{parabolic} in the iterations of the parabolic implicit time discretization scheme, that
allows to establish an interesting concentration comparison for solutions of linear parabolic problems with Neumann boundary conditions of the form \eqref{linearparabintr}.

%%%%%%%%%%%%%%%%%%%%%%%%%%%%%%%%%%%%%%%%%%%%%%%%%%%%%%%%%%%%%%%%%%%%%%%%%%%%%%%%%%%%%%%%%%%%%%%%%%%%%%%%%%%%%%%%%%%%%%%%%%%%%%%%%%%%%%%%%%%%%%%%%%%%%%%%%%%%%%%%%

\section{On symmetrization and related properties} \label{Sec2}
In this Section we briefly recall the basic notions of Schwarz symmetrization and some related fundamental properties. Readers who are interested in more
details
of the theory are warmly addressed to the classical monographs \cite{Hardy}, \cite{Bennett},  \cite{Kesavan}, \cite{Bandle} or in the paper
\cite{Talentirearrinv}.\\

\noindent A measurable real function $f$ defined on $\R^{N}$ is called \emph{radially symmetric} (or \emph{radial}) if there is a function
$\widetilde{f}:[0,\infty)\rightarrow \R$ such that $f(x)=\widetilde{f}(|x|)$ for all $x\in \R^{N}$. We will often write $f(x)=f(r)$,
$r=|x|\ge0$ for such functions by abuse of notation. We say that $f$ is \emph{rearranged} if it is radial, nonnegative and $\widetilde{f}$ is a
right-continuous, non-increasing function of $r>0$. A similar definition can be applied for real functions defined on a ball
$B_{R}(0)=\left\{x\in\R^{N}:|x|<R\right\}$.

Now, let $\Omega$ be an open set of $%
%TCIMACRO{\U{211d} }%
%BeginExpansion
\mathbb{R}
%EndExpansion
^{N}$ and $f$ be a real measurable function on $\Omega$. We then define the
\emph{distribution function} $\mu_{f}$ of $f$ as%
\[
\mu_{f}\left(  k\right)  =\left\vert \left\{  x\in\Omega:\left\vert f\left(
x\right)  \right\vert >k\right\}  \right\vert \text{ , }k\geq0,
\]
and the \emph{one dimensional decreasing rearrangement} of $f$ as%
\[
f^{\ast}\left(  s\right)  =\sup\left\{ k\geq0:\mu_{f}\left(  k\right)
>s\right\}  \text{ , }s\in\left(  0,\left\vert \Omega\right\vert \right).
\]
We may also  think of extending $f^{\ast}$ as the  zero function in $[|\Omega|,\infty)$ if $\Omega$ is bounded. From this definition it follows
that $\mu_{f^{\ast}}=\mu_{f}$ (\emph{i.\,e.\,,} $f$, and $f^{\ast}$ are equi-distributed) and $f^{\ast}$ is exactly  the \emph{generalized right
inverse function} of
$\mu_{f}$.
Furthermore, if $\Omega^{\#}$ is the ball of $%
%TCIMACRO{\U{211d} }%
%BeginExpansion
\mathbb{R}
%EndExpansion
^{N}$ centered at the origin having the same Lebesgue measure as $\Omega,$ we define the
function
\[
f^{\#}\left(  x\right)  =f^{\ast}(\omega_{N}\left\vert x\right\vert
^{N})\text{ \ , }x\in\Omega^{\#},
\]
that will be called \emph{spherical decreasing rearrangement}, or \emph{Schwarz decreasing rearrangement}, of $f$. We easily infer that $f$ is rearranged
if and
only if
$f=f^{\#}$.

The only properties which will turn useful for what follows are
the conservation of the
$L^{p}$
norms: for all $p\in[1,\infty]$
\[
\|f\|_{L^{p}(\Omega)}=\|f^{\ast}\|_{L^{p}(0,|\Omega|)}=\|f^{\#}\|_{L^{p}(\Omega^{\#})}\,,
\]
as well as the classical Hardy-Littlewood inequality
\begin{equation}
\int_{\Omega}\left\vert f\left(  x\right)  g\left(
x\right)  \right\vert dx\leq\int_{0}^{\left\vert \Omega\right\vert }f^{\ast
}\left(  s\right)  g^{\ast}\left(  s\right)  ds=\int_{\Omega^{\#}}f^{\#}(x)\,g^{\#}(x)\,dx\,,
\label{HardyLit}%
\end{equation}
where $f,g$ are measurable functions on $\Omega$.

\noindent $\bullet$ We will often deal with two-variable functions of the type
\begin{equation}\label{f}%
f:\left(  x,y\right)  \in\mathcal{C}_{\Omega}\rightarrow f\left(  x,y\right)
\in{\mathbb{R}}
\end{equation}
defined on the cylinder  $\mathcal{C}%
_{\Omega}:=\Omega\times\left(  0,+\infty\right)  $, and measurable with respect to
$x.$ In that case, it will be convenient to define the so-called {\sl Steiner symmetrization} of $\mathcal{C}_{\Omega}$ with
respect to the variable $x$, namely the set \ \hbox{$\mathcal{C}_{\Omega}^{\#}:=\Omega
^{\#}\times\left(  0,+\infty\right).$} In addition, we will denote by $\mu
_{f}\left(  k,y\right)  $ and $f^{\ast}\left(  s,y\right)  $ the distribution
function and the decreasing rearrangement of (\ref{f}), with respect to $x$
for $y$ fixed, and we will also define the function%
\[
f^{\#}\left(  x,y\right)  =f^{\ast}(\omega_{N}|x|^{N},y)
\]
which is called the \emph{Steiner symmetrization} of $f$, with respect to the line
$x=0.$ Clearly, $f^{\#}$ is a spherically symmetric and decreasing function
with respect to $x$, for any fixed $y$.

\noindent $\bullet$ We recall now two important differentiation formulas that will prominently come into play in our arguments. They are basically used
when
dealing with the derivation of sharp estimates for the rearrangement $u^{\ast}$ of a solution $u$ to a certain parabolic problem, since in that context
it is essential to differentiate with respect to the extra variable $y$ under the integral symbol, for functions defined in the form
\[
\int_{\left\{x:u(x,y)>u^{*}(s,y)\right\}}\frac{\partial u}{\partial y}(x,y)\,dx\,.
\]
Here we recall two formulas, of first and second order, available in literature. The following proposition can be
found
in
\cite{Mossino}, and is a generalization of a well-known result by C. Bandle (see \cite{Band2}).
\begin{proposition}\label{BANDLE}
Suppose that $f\in H^{1}(0,T;L^{2}(\Omega))$ is nonnegative, for some $T>0$. Then $$f^{*}\in H^{1}(0,T;L^{2}(0,|\Omega|))$$ and if \
$|\left\{x:f(x,t)=f^{*}(s,t)\right\}|=0$ \
for
a.e. $(s,t)\in(0,|\Omega|)\times(0,T)$, the following differentiation formula holds:
\begin{equation}
\int_{\left\{x:f(x,y)>f^{*}(s,y)\right\}}\frac{\partial f}{\partial y}(x,y)\,dx=\int_{0}^{s}\frac{\partial f^{*}}{\partial y}(\tau,y)\,d\tau.\label{Rakotoson}
\end{equation}
\end{proposition}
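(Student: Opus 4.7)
The plan is in three steps: first transfer the Sobolev regularity from $f$ to $f^{*}$, then rewrite the partial integral $\int_{0}^{s}f^{*}(\tau,y)\,d\tau$ as an integral of $f$ over a suitable superlevel set, and finally differentiate via a sandwich argument that neutralizes the boundary terms using the measure-zero assumption.

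\textbf{Step 1 (regularity of $f^{*}$).} I would invoke the classical $L^{2}$-nonexpansivity of the decreasing rearrangement, valid for nonnegative functions,
\[
\|f^{*}(\cdot,y_{1}) - f^{*}(\cdot,y_{2})\|_{L^{2}(0,|\Omega|)} \le \|f(\cdot,y_{1}) - f(\cdot,y_{2})\|_{L^{2}(\Omega)},
\]
to conclude that $f^{*}\in H^{1}(0,T;L^{2}(0,|\Omega|))$ with $\|\partial_{y}f^{*}(\cdot,y)\|_{L^{2}}\le\|\partial_{y}f(\cdot,y)\|_{L^{2}}$ for a.e. $y$. In particular, $\partial_{y}\!\int_{0}^{s}f^{*}(\tau,y)\,d\tau = \int_{0}^{s}\partial_{y}f^{*}(\tau,y)\,d\tau$ in the distributional sense on $(0,T)$.

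\textbf{Step 2 (variational reformulation).} Set $\Phi(s,y)=\int_{0}^{s}f^{*}(\tau,y)\,d\tau$. The classical identity
\[
\Phi(s,y) = \sup\!\left\{\int_{E}f(x,y)\,dx : E\subset\Omega,\ |E|=s\right\}
\]
combined with the assumption $|\{x:f(x,y)=f^{*}(s,y)\}|=0$ implies that the supremum is attained (a.e.) at the superlevel set $E(y):=\{x:f(x,y)>f^{*}(s,y)\}$, which then has measure exactly $s$. Hence
\[
\Phi(s,y) = \int_{E(y)}f(x,y)\,dx.
\]

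\textbf{Step 3 (sandwich and passage to the limit).} For $h>0$, since $|E(y)|=|E(y+h)|=s$, the variational characterization yields
\[
\int_{E(y)}f(x,y+h)\,dx \le \Phi(s,y+h),\qquad \int_{E(y+h)}f(x,y)\,dx \le \Phi(s,y).
\]
Subtracting the two representations of $\Phi(s,y+h)$ and $\Phi(s,y)$ produces the two-sided bound
\[
\int_{E(y)}\frac{f(x,y+h)-f(x,y)}{h}\,dx \le \frac{\Phi(s,y+h)-\Phi(s,y)}{h} \le \int_{E(y+h)}\frac{f(x,y+h)-f(x,y)}{h}\,dx.
\]
Letting $h\to 0^{+}$ (and analogously $h\to 0^{-}$), both extremes converge to $\int_{E(y)}\partial_{y}f(x,y)\,dx$ for a.e. $y$, provided $\chi_{E(y+h)}\to\chi_{E(y)}$ in $L^{2}(\Omega)$. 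Combined with Step 1, this identifies $\int_{0}^{s}\partial_{y}f^{*}(\tau,y)\,d\tau = \int_{E(y)}\partial_{y}f(x,y)\,dx$, which is exactly \eqref{Rakotoson}.

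\textbf{Main obstacle.} The delicate point is the continuity $\chi_{E(y+h)}\to\chi_{E(y)}$, which makes the sandwich collapse. Here the hypothesis $|\{f(\cdot,y)=f^{*}(s,y)\}|=0$ is decisive: it rules out plateaus of the distribution function at level $f^{*}(s,y)$, so that a small perturbation of both $f$ and the threshold alters $E(y)$ only by a small symmetric difference. Rigorously one may first approximate $f$ by smooth functions, where the coarea formula gives the identity directly, and then pass to the limit using the $L^{2}$-Lipschitz bound of Step 1 together with the absence-of-plateau property that propagates stability of $E(y)$ along the approximation.
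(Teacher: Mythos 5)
You are proving a statement that the paper itself does not prove: Proposition~\ref{BANDLE} is stated with an attribution to Mossino--Rakotoson (\cite{Mossino}) and to Bandle (\cite{Band2}), and no argument is given in the text. So there is no ``paper proof'' to compare against; what follows is an assessment of your argument on its own terms and relative to the cited sources.

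Your three steps are sound and reproduce, in outline, the classical route. Step~1 (contractivity of the rearrangement map in $L^{2}$ transferred to difference quotients in $y$) is the standard way to get $f^{*}\in H^{1}(0,T;L^{2}(0,|\Omega|))$ with $\|\partial_{y}f^{*}(\cdot,y)\|_{L^{2}}\leq\|\partial_{y}f(\cdot,y)\|_{L^{2}}$. Step~2 (Hardy--Littlewood variational identity and the fact that the no-plateau hypothesis forces $|E(y)|=s$) is correct. Step~3 is precisely Bandle's ``sandwich'' device: the two one-sided bounds obtained by freezing the optimal set at either end of the interval $[y,y+h]$, with the middle term $(\Phi(s,y+h)-\Phi(s,y))/h$ squeezed between them. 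This is the same mechanism used in~\cite{Band2} for Lipschitz data, and Mossino--Rakotoson extend it to the $H^{1}(0,T;L^{2})$ setting by an approximation argument, essentially as you indicate in your last paragraph.

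The one place where your argument is stated a bit loosely is the passage to the limit on the right extreme, for which you invoke $\chi_{E(y+h)}\to\chi_{E(y)}$ in $L^{2}(\Omega)$. This is correct in substance, but it is worth recording that this convergence is only obtained a.e.\ along subsequences: one first fixes $y$ among the a.e.\ points where $\Phi(s,\cdot)$ is differentiable and where the difference quotients of $f$ converge strongly in $L^{2}(\Omega)$, then passes to a subsequence on which $f(\cdot,y+h)\to f(\cdot,y)$ a.e.\ in $x$ and $f^{*}(\cdot,y+h)\to f^{*}(\cdot,y)$ a.e.\ in $\tau$ (including at $\tau=s$, since $s$ ranges over a full-measure set). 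On this subsequence, for $x$ outside the null set $\{f(\cdot,y)=f^{*}(s,y)\}$ the sign of $f(x,y+h)-f^{*}(s,y+h)$ eventually stabilises, so $\chi_{E(y+h)}\to\chi_{E(y)}$ a.e.\ and thus in $L^{2}$; then, since the middle term of the sandwich already has a limit $\partial_{y}\Phi(s,y)$ along the full sequence, the subsequential identification suffices. With this small supplement, your proof is complete and matches the spirit of the references the paper cites.
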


Moreover, the following second order differentiation formula (which was also proved in \cite{AlLionTrom} in a more regular framework) is due to Mercaldo and
Ferone
(see \cite{FeroneMercaldoStein}):

\begin{proposition}
 Let us choose a nonnegative function $f\in W^{2,\infty}\left(  \mathcal{C}_{\Omega}\right)  $. Then for almost every
$y\in(0,+\infty)$
the following
 differentiation formula holds:
\begin{align}
\int_{\left\{x:f\left(  x,y\right)  >f^{\ast}\left(  s,y\right)\right\}  }\frac{\partial^{2}%
f}{\partial y^{2}}\left(  x,y\right)  dx  &  =\frac{\partial^{2}}{\partial
y^{2}}\int_{0}^{s}f^{\ast}\left(  \tau,y\right)  d\tau-\int_{\left\{x:f\left(
x,y\right)  =f^{\ast}\left(  s,y\right)\right\}  }\frac{\left(  \frac{\partial
f}{\partial y}\left(  x,y\right)  \right)  ^{2}}{\left\vert \nabla
_{x}f\right\vert }\,d\mathcal{H}^{N-1}\left(  x\right)
 \nonumber\\
&  \!\!\!+\left(  \int_{\left\{x:f\left(  x,y\right)  =f^{\ast}\left(  s,y\right)\right\}}
\!\frac{\frac{\partial f}{\partial y}\left(  x,y\right)  }{\left\vert
\nabla_{x}f\right\vert }\,d\mathcal{H}^{N-1}\left(  x\right)  \!\right)
^{2}\!\left(  \!\int_{\left\{x:f\left(  x,y\right)  =f^{\ast}\left(  s,y\right)\right\}
}\!\frac{1}{\left\vert \nabla_{x}f\right\vert }\,d\mathcal{H}^{N-1}\left(
x\right)  \!\right)  ^{-1}\!
.\label{Ferone-Mercaldo}
\end{align}
\end{proposition}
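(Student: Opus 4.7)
The plan is to obtain the second-order formula by differentiating the first-order Bandle--Mossino identity (\ref{Rakotoson}) once more with respect to $y$, and carefully accounting for the motion of the superlevel set $E_{s,y}:=\{x:f(x,y)>f^{\ast}(s,y)\}$. Since $f\in W^{2,\infty}$, for a.e.\ $y$ the function $f(\cdot,y)$ has level sets of zero measure and $\{|\nabla_{x}f|=0\}$ is negligible on $\{f=f^{\ast}(s,y)\}$, so all the surface integrals below are well defined via the coarea formula. Applying Reynolds' transport theorem to the moving domain $E_{s,y}$ with integrand $\partial_y f$ yields
\begin{equation*}
\frac{d}{dy}\int_{E_{s,y}}\partial_{y}f\,dx \;=\;\int_{E_{s,y}}\partial_{yy}f\,dx+\int_{\{f=f^{\ast}(s,y)\}}\partial_{y}f\cdot v_{n}\,d\mathcal{H}^{N-1},
\end{equation*}
where the outward normal velocity of $\partial E_{s,y}$, found by implicit differentiation of $f(x(y),y)=f^{\ast}(s,y)$ on the boundary, equals $v_{n}=(\partial_{y}f-\partial_{y}f^{\ast})/|\nabla_{x}f|$. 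Using (\ref{Rakotoson}) on the left-hand side transforms the derivative into $\partial_{yy}\int_{0}^{s}f^{\ast}(\tau,y)d\tau$.

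Next I would identify $\partial_{y}f^{\ast}(s,y)$ explicitly in terms of surface integrals on $\{f=f^{\ast}(s,y)\}$. Differentiating the relation $\int_{E_{s,y}}\partial_{y}f\,dx=\int_{0}^{s}\partial_{y}f^{\ast}(\tau,y)\,d\tau$ with respect to $s$, and using the coarea identity
\begin{equation*}
\frac{d}{ds}\int_{E_{s,y}}g\,dx \;=\;-\,\partial_{s}f^{\ast}(s,y)\int_{\{f=f^{\ast}(s,y)\}}\frac{g}{|\nabla_{x}f|}\,d\mathcal{H}^{N-1},
\end{equation*}
first with $g\equiv 1$ (giving $-\partial_{s}f^{\ast}\cdot\int\!|\nabla_{x}f|^{-1}=1$) and then with $g=\partial_{y}f$, one obtains
\begin{equation*}
\partial_{y}f^{\ast}(s,y)\;=\;\frac{\displaystyle\int_{\{f=f^{\ast}(s,y)\}}\frac{\partial_{y}f}{|\nabla_{x}f|}\,d\mathcal{H}^{N-1}}{\displaystyle\int_{\{f=f^{\ast}(s,y)\}}\frac{1}{|\nabla_{x}f|}\,d\mathcal{H}^{N-1}}.
\end{equation*}

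Substituting the expression for $v_{n}$ into the Reynolds formula and plugging in this identity for $\partial_{y}f^{\ast}$ immediately produces the three terms on the right-hand side of (\ref{Ferone-Mercaldo}): the pure second derivative of $\int_{0}^{s}f^{\ast}$, the negative surface integral $\int (\partial_{y}f)^{2}/|\nabla_{x}f|\,d\mathcal{H}^{N-1}$, and the ``cross'' term given by the squared first-moment divided by the zeroth moment of $|\nabla_{x}f|^{-1}$ along $\{f=f^{\ast}(s,y)\}$.

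The main obstacle is the rigorous justification of differentiating under the integral sign when the domain of integration depends on $y$: one must show that the boundary $\{f=f^{\ast}(s,y)\}$ is regular enough for the coarea formula and that $v_{n}$ is well defined a.e.\ on it. This is where the $W^{2,\infty}$ hypothesis and the measure-zero condition on level sets (implicit already in Proposition~\ref{BANDLE}) play the decisive role; one typically argues first on the smooth set $\{|\nabla_{x}f|>\delta\}$ and then passes to the limit $\delta\to 0^{+}$, controlling the critical set via Sard-type arguments. Once this technical point is handled, the algebraic manipulation above gives the formula directly.
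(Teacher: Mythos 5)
The paper does not reprove this identity; it simply quotes it with references to Ferone--Mercaldo and to Alvino--Lions--Trombetti, so there is no ``paper's own proof'' to compare against. That said, your algebraic scheme is correct: the Reynolds transport identity on $E_{s,y}=\{f(\cdot,y)>f^{\ast}(s,y)\}$ with integrand $\partial_{y}f$, the expression $v_{n}=(\partial_{y}f-\partial_{y}f^{\ast})/|\nabla_{x}f|$ for the normal boundary velocity, the first-order formula \eqref{Rakotoson} to recognize the left-hand side as $\partial_{yy}\int_{0}^{s}f^{\ast}$, and finally the coarea relation
\[
\partial_{y}f^{\ast}(s,y)=\left(\int_{\{f=f^{\ast}\}}\frac{\partial_{y}f}{|\nabla_{x}f|}\,d\mathcal{H}^{N-1}\right)\left(\int_{\{f=f^{\ast}\}}\frac{d\mathcal{H}^{N-1}}{|\nabla_{x}f|}\right)^{-1}
\]
do combine exactly into the three terms of \eqref{Ferone-Mercaldo}. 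This is, in spirit, the Alvino--Lions--Trombetti route, which they carry out under stronger smoothness assumptions.

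The gap --- which you yourself flag --- is that the derivation as written is heuristic at precisely the points where the $W^{2,\infty}$ hypothesis must do its work. Reynolds' transport theorem requires that $\{f(\cdot,y)=f^{\ast}(s,y)\}$ be a Lipschitz hypersurface moving measurably in $y$ and that $|\nabla_{x}f|>0$ $\mathcal{H}^{N-1}$-a.e.\ on it; for a mere $W^{2,\infty}$ function neither of these is automatic, and the set $\{\nabla_{x}f=0\}\cap\{f=f^{\ast}(s,y)\}$ can have positive $\mathcal{H}^{N-1}$-measure for a non-negligible set of $(s,y)$, which would make the surface integrals in \eqref{Ferone-Mercaldo} ill-defined or the coarea computation fail. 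The Ferone--Mercaldo proof handles exactly this by working at the level of the distribution function and its (absolutely continuous) inverse, proving differentiability in $y$ of $s\mapsto\int_{0}^{s}f^{\ast}(\tau,y)\,d\tau$ directly via difference quotients and Lebesgue-point arguments rather than by a moving-domain argument, and then identifying the extra terms through coarea only on the regular part of the level set. Your sentence about truncating to $\{|\nabla_{x}f|>\delta\}$ and letting $\delta\to0^{+}$ points in the right direction, but it is not trivial: one must show that the contribution of the critical set vanishes in the limit, which uses that $f^{\ast}(\cdot,y)$ is locally absolutely continuous in $s$ with $\partial_{s}f^{\ast}\neq0$ for a.e.\ $s$ under the equimeasurability hypothesis. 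So the outline is right and leads to the correct formula, but as a proof it would need the rigorous justification of those two analytic facts rather than an appeal to Reynolds' theorem.
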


%%%%%%%%%%%%%%%%%%%%%%%%%%%%%%%%%%%%%%%%%%%%%%%%%%%%%%%%%%%%%%%%%%%%%%%%%%%%%%%%%%%%%%%%%%%%%%%%%%%%%%%%%%%%%%%%%%%%%%%%%%%%%%%%%%%%%%%%%%%%%%%%%%%%%%%%%%%%

\subsection{Mass concentration}

Since we will provide estimates of the solutions of our fractional elliptic and parabolic problems in terms of their integrals, the following
definition, taken from  \cite{Vsym82}, is of basic importance.

\begin{definition}
Let $f,g\in L^{1}_{loc}(\R^{N})$ be two nonnegative radially symmetric functions on $\R^{N}$. We say that $f$ is less concentrated than $g$, and we write
$f\prec
g$ if for
all $R>0$ we get
\[
\int_{B_{R}(0)}f(x)dx\leq \int_{B_{R}(0)}g(x)dx.
\]
\end{definition}
The partial order relationship $\prec$ is called \emph{comparison of mass concentrations}.
Of course, this definition can be suitably adapted if $f,g$ are radially symmetric and locally integrable functions on a ball $B_{R}$. Moreover, we have
that
$f\prec g$ if and only if
\[
\int_{0}^{s}f^{\ast}(\tau)d\tau\leq \int_{0}^{s}g^{\ast}(\tau)d\tau,
\]
for all $s\geq0$.

The comparison of mass concentrations enjoys a nice equivalent formulation if $f$ and $g$ are rearranged. Indeed the following result holds (for the proof we refer to
\cite{Chong}, \cite{VANS05}):

\begin{lemma}\label{lemma1}
Let $f,g\in L^{1}(\Omega)$ be two rearranged functions on a ball $\Omega=B_{R}(0)$. Then $f\prec g$ if and only if for every convex
nondecreasing
function
$\Phi:[0,\infty)\rightarrow [0,\infty)$ with $\Phi(0)=0$ we have
\begin{equation}
\int_{\Omega}\Phi(f(x))\,dx\leq \int_{\Omega}\Phi(g(x))\,dx.
\end{equation}
This result still holds if $R=\infty$ and $f,g\in L^{1}_{loc}(\R^{N})$ with $g\rightarrow0$ as $|x|\rightarrow\infty$, in the sense that $\mu_{g}(k)<\infty$ for all $k>0$.
\end{lemma}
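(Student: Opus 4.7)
\textbf{Proof plan for Lemma \ref{lemma1}.} Since $f$ and $g$ are rearranged, $f=f^{\#}$ and $g=g^{\#}$, so by equimeasurability
$$\int_{\Omega}\Phi(f)\,dx=\int_{0}^{|\Omega|}\Phi(f^{\ast}(s))\,ds,\qquad \int_{\Omega}\Phi(g)\,dx=\int_{0}^{|\Omega|}\Phi(g^{\ast}(s))\,ds,$$
and the condition $f\prec g$ reads $F(s)\leq G(s)$ for all $s\in[0,|\Omega|]$, where $F(s):=\int_{0}^{s}f^{\ast}(\tau)\,d\tau$ and $G(s):=\int_{0}^{s}g^{\ast}(\tau)\,d\tau$ are concave and nondecreasing. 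The plan is therefore to translate the lemma into a purely one-dimensional Hardy--Littlewood--P\'olya-type statement about $F$ and $G$, and to exploit the family of truncations $(t-k)_{+}$ as a generating class for all admissible $\Phi$.

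For the direction $f\prec g\Rightarrow$ the integral inequality, I would first establish the key identity
$$\int_{0}^{|\Omega|}(f^{\ast}(s)-k)_{+}\,ds=\sup_{s\in[0,|\Omega|]}\bigl[F(s)-k s\bigr]\qquad(k\geq 0),$$
which follows because the concave function $s\mapsto F(s)-ks$ has derivative $f^{\ast}(s)-k$, non-increasing in $s$, so it attains its maximum at $s=\mu_{f}(k)$, where the supremum equals $\int_{0}^{\mu_{f}(k)}(f^{\ast}-k)\,ds$. The same identity holds for $G$. From $F\leq G$ one reads off at once
$$\int_{\Omega}(f-k)_{+}\,dx\leq \int_{\Omega}(g-k)_{+}\,dx\qquad\forall\,k\geq 0.$$
Next, any convex nondecreasing $\Phi:[0,\infty)\to[0,\infty)$ with $\Phi(0)=0$ admits the representation
$$\Phi(t)=a\,t+\int_{0}^{\infty}(t-k)_{+}\,d\nu(k),$$
with $a:=\Phi'(0^{+})\geq 0$ and $\nu$ a nonnegative Borel measure on $[0,\infty)$ associated with the distributional second derivative of $\Phi$. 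Plugging $f$ and $g$ into this formula and applying Fubini then combines the trivial case $\Phi(t)=t$ (taking $s=|\Omega|$ in $F\leq G$) with the truncation inequality above to yield $\int_{\Omega}\Phi(f)\leq\int_{\Omega}\Phi(g)$.

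For the converse, I would argue by contradiction: assume the integral inequality holds for all admissible $\Phi$, but $F(s_{0})>G(s_{0})$ for some $s_{0}\in(0,|\Omega|]$ (chosen, by right-continuity, at a point where $g^{\ast}$ is continuous). Setting $k:=g^{\ast}(s_{0})$, the formula above gives $\int_{\Omega}(g-k)_{+}\,dx=G(s_{0})-k\,s_{0}$, while
$$\int_{\Omega}(f-k)_{+}\,dx=\sup_{s}[F(s)-ks]\geq F(s_{0})-k\,s_{0}>G(s_{0})-k\,s_{0},$$
contradicting the hypothesis applied to the admissible function $\Phi(t)=(t-k)_{+}$. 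For the unbounded case $R=\infty$, the hypothesis $g(x)\to 0$ (so $\mu_{g}(k)<\infty$) ensures that $\int_{\Omega}(g-k)_{+}$ and the concave envelope argument remain well-defined, and the same proof carries through verbatim.

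The only genuinely delicate step is the representation of $\Phi$ as a superposition of affine and truncation pieces: one must choose the correct version of $\Phi'$ (for instance the right-derivative, so that $\Phi'-a$ is the cumulative distribution function of $\nu$) and verify that the Fubini swap is legitimate, which is where the sign conditions $\Phi\geq 0$, $\Phi(0)=0$, and the nondecreasing/convex assumptions are all used. Once this structural lemma is in hand, the rest of the argument is one-dimensional and purely geometric, relying on concavity of $F$ and $G$.
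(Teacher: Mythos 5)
Your proposal is correct, and it reproduces the classical Hardy--Littlewood--P\'olya/Chong argument for this equivalence: the Legendre-type identity $\int_{0}^{|\Omega|}(f^{\ast}-k)_{+}\,ds=\sup_{s}[F(s)-ks]$, the representation of any convex nondecreasing $\Phi$ with $\Phi(0)=0$ as $\Phi(t)=at+\int_{0}^{\infty}(t-k)_{+}\,d\nu(k)$, and the converse via a contradiction using $\Phi(t)=(t-k)_{+}$ with $k=g^{\ast}(s_{0})$. Note that the paper does not actually supply a proof of Lemma \ref{lemma1}; it simply cites the references \cite{Chong} and \cite{VANS05}, which themselves carry out essentially this scheme, so there is no divergence of method to compare. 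One small remark: the parenthetical ``chosen, by right-continuity, at a point where $g^{\ast}$ is continuous'' is unnecessary; the key fact is that whenever $g^{\ast}(s_{0})=k$ the concave function $s\mapsto G(s)-ks$ has right-derivative $g^{\ast}(s_{0})-k=0$ at $s_{0}$, so $s_{0}$ is automatically a maximizer, with no continuity hypothesis needed. In the unbounded case you might also spell out why $\mu_{f}(k)<\infty$ (needed for $\int(f-k)_{+}$ to be finite in the forward direction): if $\mu_{f}(k)=\infty$ then $F(s)\geq ks$ for all $s$, whereas $\mu_{g}(k)<\infty$ forces $G(s)/s\to0$, contradicting $F\leq G$.
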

From this Lemma it easily follows that if $f\prec g$ and $f,g$ are rearranged, then
\begin{equation}
\|f\|_{L^{p}(\Omega)}\leq \|g\|_{L^{p}(\Omega)}\quad \forall p\in[1,\infty].
\end{equation}

%%%%%%%%%%%%%%%%%%%%%%%%%%%%%%%%%%%%%%%%%%%%%%%%%%%%%%%%%%%%%%%%%%%%%%%%%%%%%%%%%%%%%%%%%%%%%%%%%%%%%%%%%%%%%%%%%%%%%%%%%%%%%%%%%%%%%%%%%%%%%%%%%%%%%%%%%%%%%%
\section{Functional background}\label{Sec3}
In this Section we provide a self-contained description of the functional background which is necessary for the well-posedness of problems of the
type
\eqref{eq.0}. Most of the material we present here are excerpts of the papers \cite{StingaVolz}, \cite{PellacciMontef} to which we refer the interested
reader
for extra details. Furthermore, we point out that another version of a nonlocal elliptic Neumann problem is available in literature, see \emph{e.g.} \cite{ValdinoRos}.

Let us consider the homogeneous Neumann eigenvalue problem for the Laplacian on a smooth bounded domain $\Omega$ of $\R^{N}$:
\begin{equation} \label{eigen}
\left\{
\begin{array}
[c]{lll}%
-\Delta\varphi=\lambda\varphi   &  & in\text{ }%
\Omega,\\[10pt]
\dfrac{\partial \varphi}{{\partial\nu}}=0 &  & on\text{ }\partial\Omega.
\end{array}
\right. %
\end{equation}
It is well known (see for example \cite{Evans,Gilbarg-Trudinger}) that there exists a sequence of nonnegative eigenvalues
$\{\lambda_{k}\}_{k\in\N_{0}}$ corresponding to eigenfunctions $\{\varphi_{k}\}_{k\in\N_{0}}$ in $H^{1}(\Omega)$, the latter
being weak solutions to \eqref{eigen}. We have that $\lambda_{0}=0$, $\varphi_{0}=1/\sqrt{|\Omega|}$,
$\int_{\Omega}\varphi_{k}\, dx=0$ and $\varphi_k$ belongs to $C^\infty(\overline{\Omega})$ for all $k\geq1$.

In order to introduce the spectral Neumann fractional Laplacian
$(-\Delta_N)^{\sigma}$, we define its domain as
$$\mathcal{H}^{\sigma}(\Omega)\equiv\Dom((-\Delta_N)^{\sigma}):=
\Big\{u\in L^2(\Omega):\sum_{k=1}^\infty\lambda_k^{\sigma}\,|\langle u,\varphi_k
\rangle_{L^2(\Omega)}|^2<\infty\Big\},$$
which is a Hilbert space equipped with the scalar product
\[
\langle u,v\rangle_{\mathcal{H}^{\sigma}(\Omega)}:=\langle u,v\rangle_{L^{2}(\Omega)}+\sum_{k=1}^{\infty}\lambda_{k}^{\sigma}\, \langle
u,\varphi_k\rangle_{L^2(\Omega)}\langle
v,\varphi_k\rangle_{L^2(\Omega)},
\]
defining the following norm in $\mathcal{H}^{\sigma}(\Omega)$:
$$\|u\|^2_{\mathcal{H}^{\sigma}(\Omega)}=\|u\|_{L^2(\Omega)}^{2}+\sum_{k=1}^{\infty}\lambda_{k}^{\sigma}\,|\langle
u,\varphi_k\rangle_{L^2(\Omega)}|^{2}.$$
Since $\lambda_k\nearrow\infty$, it is obvious that $C^\infty(\overline{\Omega})\subset H^1(\Omega)\subset\mathcal{H}^{\sigma}(\Omega)$.
For $u\in\mathcal{H}^{\sigma}(\Omega)$, we define $(-\Delta_N)^{\sigma}u$ as {an} element in the dual space
$\mathcal{H}^{\sigma}(\Omega)'$ given by
$$(-\Delta_N)^{\sigma}u=\sum_{k=1}^\infty\lambda_k^{\sigma}\,\langle u,\varphi_k\rangle_{L^2(\Omega)}\varphi_k,
\quad\hbox{in}~\mathcal{H}^{\sigma}(\Omega)',$$
that is, for any function $v\in\mathcal{H}^{\sigma}(\Omega)$,
$$\langle(-\Delta_N)^{\sigma}u,v\rangle=\sum_{k=1}^\infty\lambda_k^{\sigma}\,\langle u,\varphi_k\rangle_{L^2(\Omega)}
\langle v,\varphi_k\rangle_{L^2(\Omega)}.$$
Notice that the set of constant functions
is the nontrivial kernel {of
$(-\Delta_N)^{\sigma}$ in $\mathcal{H}^{\sigma}(\Omega)$}.
The last identity can be rewritten as
$$\langle(-\Delta_N)^{\sigma}u,v\rangle=\int_\Omega\left[(-\Delta_N)^{\sigma/2}u\right]\left[(-\Delta_N)^{\sigma/2}v\right]\,dx,\quad\hbox{for
every}~v\in\mathcal{H}^{\sigma}(\Omega),$$
where $(-\Delta_{{N}})^{\sigma/2}$ is defined by taking the power $\sigma/2$ of the eigenvalues $\lambda_k$.\\
Actually it is possible to identify (see \cite[Theorem 2.4]{StingaVolz} and the generalization in \cite{Caffarelli-Stinga}) the domain $\mathcal{H}^{\sigma}(\Omega)$ of $(-\Delta_N)^{\sigma}$ with the fractional
Sobolev space $H^{\sigma}(\Omega)$.

Now let us consider problem \eqref{eq.0} with $c=0$, that is the problem
\begin{equation} \label{eq.0.1}
\left\{
\begin{array}
[c]{lll}%
\left(  -\Delta\right)^{\sigma}u=f\left(  x\right)   &  & in\text{ }%
\Omega,\\[10pt]
\dfrac{\partial u}{{\partial\nu}}=0 &  & on\text{ }\partial\Omega.
\end{array}
\right. %
\end{equation}
which in our notations can be written in the form
\[
(-\Delta_{N})^{\sigma}u=f.
\]
For a function $u\in L^{1}(\Omega)$ we set
\[
u_{\Omega}:=\frac{1}{|\Omega|}\int_{\Omega}u\,dx.
\]
We define the functional spaces
\[
\mathfrak{H}^{\sigma}(\Omega):=\left\{u\in H^{\sigma}(\Omega):u_{\Omega}=0\right\}
\]
and
\[
\mathscr{H}^{1}(\mathcal{C}_{\Omega},y^{1-2\sigma}):=\left\{w\in H^{1}(\mathcal{C}_{\Omega},y^{1-2\sigma}):(w(\cdot,y))_{\Omega}=0,\,\forall y\geq0\right\},
\]
where $H^{1}(\mathcal{C}_{\Omega},y^{1-2\sigma})$ is the weighted Sobolev space with respect to the weight $y^{1-2\sigma}$.
By \cite[Lemma 2.2]{PellacciMontef}) the space $\mathscr{H}^{1}(\mathcal{C}_{\Omega},y^{1-2\sigma})$ can be equipped with the norm
\[
\|w\|_{\mathscr{H}^{1}(\mathcal{C}_{\Omega},y^{1-2\sigma})}=\left(\iint_{\mathcal{C}_{\Omega}}y^{1-2\sigma}|\nabla_{x,y}w|^{2}dx\,dy\right)^{1/2}.
\]
It is possible to provide the following useful characterization of $\mathfrak{H}^{\sigma}(\Omega)$ (see \cite[Proposition 2]{PellacciMontef}):
\begin{align*}
\mathfrak{H}^{\sigma}(\Omega)&=\left\{u=\tr w:w\in \mathscr{H}^{1}(\mathcal{C}_{\Omega},y^{1-2\sigma}) \right\}\\
&=\Big\{u\in L^2(\Omega):u_{\Omega}=0\,and\,\sum_{k=1}^\infty\lambda_k^{\sigma}|\langle u,\varphi_k
\rangle_{L^2(\Omega)}|^2<\infty\Big\};
\end{align*}
moreover, $\mathfrak{H}^{\sigma}(\Omega)$ is an Hilbert space equipped with the Gagliardo seminorm
\[
\|u\|_{\mathfrak{H}^{\sigma}(\Omega)}:=[u]_{H^{\sigma}(\Omega)}:=\left(\int_{\Omega}\int_{\Omega}\frac{|u(x)-u(y)|^{2}}{|x-y|^{n+2\sigma}}\,dx\,dy\)^{1/2}.
\]
Now we wish to particularize the general extension problem proved in
\cite{Stinga-Torrea} for the operator $(-\Delta_N)^{\sigma}$ restricted to $\mathfrak{H}^{\sigma}(\Omega)$ .

\begin{theorem}[Extension problem for $(-\Delta_N)^{\sigma}$]\label{extensth}
Let $u\in \mathfrak{H}^{\sigma}(\Omega)$. Define
\begin{equation}w(x,y):=\sum_{k=1}^{\infty}\rho(\lambda_{k}^{1/2}y)\,
\langle u,\varphi_k\rangle_{L^2(\Omega)}\varphi_{k}(x),\label{trueextens}\end{equation}
where the function $\rho(t)$ solves the problem
\begin{equation} \label{ODE}
\left\{
\begin{array}
[c]{lll}%
\rho^{\prime\prime}(t)+\frac{1-2\sigma}{t}\rho^{\prime}(t)=\rho(t)   &  & t>0,\\[5pt]
-\lim_{t\rightarrow 0^{+}}t^{1-2\sigma}\rho^{\prime}(t)=\kappa_{\sigma},\\[5pt]
\rho(0)=1,
\end{array}
\right. %
\end{equation}
where
\[
\kappa_{\sigma}:=\frac{2^{1-2\sigma}\,\Gamma(1-\sigma)}{\Gamma(\sigma)}.
\]
Then $w\in \mathscr{H}^{1}(\mathcal{C}_{\Omega},y^{1-2\sigma})$ and it is the unique weak solution to the extension problem
\begin{equation}\label{extens1}
\begin{cases}
\dfrac{\partial^{2}w}{\partial y^{2}}+\dfrac{1-2\sigma}{y}\dfrac{\partial w}{\partial y}+\Delta_{x}w=0,&\hbox{in}~\mathcal{C}_{\Omega},\\
\\
 \dfrac{\partial w}{\partial \nu_{x}}=0,&\hbox{on}~\partial_L\mathcal{C}_{\Omega},\\
\\
 w(x,0)=u(x),&\hbox{on}~\Omega,
\end{cases}
\end{equation}
where $\nu$ is the outward normal
to the lateral boundary $\partial_L\mathcal{C}_{\Omega}$ of $\mathcal{C}_{\Omega}$.
More precisely,
$$\iint_{\mathcal{C}_{\Omega}}y^{1-2\sigma}\big(\nabla_{x}w\cdot\nabla_{x}\psi+ w_y\psi_y\big) \,dx\,dy=0,$$
for all test functions $\psi\in H^1(\mathcal{C}_{\Omega},y^{1-2\sigma})$ with zero trace over $\Omega$, i.e. $\tr_\Omega\psi=0$,
and $\lim_{y\to0^+}w(x,y)=u(x)$ in $L^2(\Omega)$. Furthermore, the function $w$ is the unique minimizer of the energy functional
\begin{equation}\mathcal{F}(w)=\frac{1}{2}\iint_{\mathcal{C}_{\Omega}}y^{1-2\sigma}\big(|\nabla_{x}w|^{2}+|w_{y}|^{2}\big)\,dx\,dy,\label{energufunct}\end{equation}
over the set  $\mathcal{U}=\left\{w\in H^{1}(\mathcal{C}_{\Omega},y^{1-2\sigma}):\,\tr_{\Omega}w=u\right\}$. We can also write
\begin{equation}\label{eq.5}
w(x,y)=\frac{y^{2\sigma}}{4^{\sigma}\Gamma(\sigma)}\int_{0}^{\infty}e^{- y^2/(4t)}e^{t\Delta_N}u(x)\,\frac{dt}{t^{1+\sigma}}
\end{equation}
where $e^{t\Delta_N}u(x)$ is the heat diffusion semigroup generated by the Neumann Laplacian acting on $u$.
An equivalent formula for $w$ is
$$w(\cdot,y)=\frac{1}{\Gamma(\sigma)}\int_0^\infty e^{- y^2/(4t)}e^{t\Delta_N}((-\Delta_N)^{\sigma}u)\,\frac{dt}{t^{1-\sigma}}.$$
Moreover,
\begin{equation}\label{DirchNeuma}
-\frac{1}{\kappa_{\sigma}}\lim_{y\to 0^+}y^{1-2\sigma}w_y=(-\Delta_N)^{\sigma}u,\quad\hbox{in}~\mathcal{H}^{\sigma}(\Omega)'.\end{equation}
\end{theorem}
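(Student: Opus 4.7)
The plan is to specialize the general Caffarelli--Silvestre/Stinga--Torrea extension procedure to the Neumann setting by using the spectral expansion in terms of the Neumann eigenfunctions $\{\varphi_k\}$. The candidate $w$ is given in \eqref{trueextens}, so the task splits naturally into: (i) checking the regularity $w\in \mathscr{H}^{1}(\mathcal{C}_{\Omega},y^{1-2\sigma})$; (ii) verifying the PDE and the boundary conditions; (iii) establishing the integral representations; (iv) identifying the Dirichlet-to-Neumann map; (v) proving uniqueness via energy minimization.

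First I would analyze the ODE \eqref{ODE}. After the change of variable $\rho(t)=t^{\sigma}\widetilde\rho(t)$ it becomes a modified Bessel equation, with solution
\[
\rho(t)=\frac{2^{1-\sigma}}{\Gamma(\sigma)}\,t^{\sigma}K_{\sigma}(t),
\]
where $K_{\sigma}$ is the modified Bessel function of the second kind; the boundary data $\rho(0)=1$ and the normalization $-\lim_{t\to 0^+}t^{1-2\sigma}\rho'(t)=\kappa_{\sigma}$ are then direct consequences of the classical asymptotics $K_{\sigma}(t)\sim \tfrac12\Gamma(\sigma)(t/2)^{-\sigma}$ and $K_{\sigma}(t)\sim \sqrt{\pi/(2t)}e^{-t}$ at $\infty$. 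These asymptotics also give quantitative bounds on $\rho$ and $\rho'$ needed to control the series term by term. Since each summand $\rho(\lambda_k^{1/2}y)\varphi_k(x)$ satisfies \eqref{extens1} by a direct scaling of the ODE together with $-\Delta\varphi_k=\lambda_k\varphi_k$ and the Neumann condition $\partial_\nu\varphi_k=0$, summing gives a formal solution; the condition $u_{\Omega}=0$ kills the $k=0$ mode and guarantees that $(w(\cdot,y))_{\Omega}=0$ for every $y\ge 0$.

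Next I would turn to the energy estimate. By Parseval's identity,
\[
\iint_{\mathcal{C}_\Omega} y^{1-2\sigma}\bigl(|\nabla_x w|^2+|w_y|^2\bigr)\,dx\,dy
=\sum_{k=1}^\infty |\langle u,\varphi_k\rangle_{L^2(\Omega)}|^2\int_0^\infty y^{1-2\sigma}\bigl(\lambda_k\rho^2+\rho'^2\bigr)(\lambda_k^{1/2}y)\,dy,
\]
and rescaling $\tau=\lambda_k^{1/2}y$ reduces the $y$-integral to a universal constant $c_\sigma$ times $\lambda_k^{\sigma}$. Hence the energy equals $c_\sigma\|u\|_{\mathfrak{H}^\sigma(\Omega)}^2$, proving membership in $\mathscr{H}^{1}(\mathcal{C}_{\Omega},y^{1-2\sigma})$ and, together with termwise verification, the weak formulation against any $\psi\in H^{1}(\mathcal{C}_{\Omega},y^{1-2\sigma})$ with $\tr_\Omega\psi=0$. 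Uniqueness in the class $\mathcal{U}$ and the minimization property both follow by the standard Dirichlet principle: the quadratic functional $\mathcal{F}$ is strictly convex on $\mathcal{U}$ and its Euler--Lagrange equation is precisely \eqref{extens1}, so a minimizer exists and is unique, and must coincide with $w$.

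To obtain the subordination formula \eqref{eq.5} I would apply the integral identity
\[
t^{\sigma}K_{\sigma}(t)=\frac{t^{2\sigma}}{2^{1+\sigma}}\int_0^\infty e^{-s-t^2/(4s)}\,\frac{ds}{s^{1+\sigma}},
\]
substitute $t=\lambda_k^{1/2}y$ and change variables $s=\lambda_k\tau$; this turns $\rho(\lambda_k^{1/2}y)$ into $\frac{y^{2\sigma}}{4^\sigma\Gamma(\sigma)}\int_0^\infty e^{-y^2/(4\tau)}e^{-\tau\lambda_k}\frac{d\tau}{\tau^{1+\sigma}}$, and summing over $k$ against $\langle u,\varphi_k\rangle\varphi_k$ reproduces \eqref{eq.5}, with the companion formula obtained by the analogous identity for $t^{-\sigma}K_\sigma(t)$ after writing $(-\Delta_N)^\sigma u=\sum_k \lambda_k^\sigma\langle u,\varphi_k\rangle\varphi_k$. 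Finally, the boundary identity \eqref{DirchNeuma} is obtained termwise from the second condition in \eqref{ODE}:
\[
-y^{1-2\sigma}w_y(x,y)=-\sum_{k=1}^{\infty}\lambda_k^{1/2}y^{1-2\sigma}\rho'(\lambda_k^{1/2}y)\langle u,\varphi_k\rangle\varphi_k(x)
=\sum_{k=1}^{\infty}\lambda_k^{\sigma}(\lambda_k^{1/2}y)^{1-2\sigma}\cdot\bigl(-\rho'/\,\cdot\,\bigr)\langle u,\varphi_k\rangle\varphi_k,
\]
and passing to the limit $y\to 0^+$ gives $\kappa_\sigma(-\Delta_N)^\sigma u$ in $\mathcal{H}^{\sigma}(\Omega)'$.

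The main technical obstacle I foresee is justifying the termwise passage to the limit in \eqref{DirchNeuma}: one must show that the tail of the series converges in the dual norm $\|\cdot\|_{\mathcal{H}^\sigma(\Omega)'}$ uniformly in $y$ near $0$, which requires sharp control of $t^{1-2\sigma}\rho'(t)+\kappa_\sigma$ as $t\to 0^+$. This is handled by a quantitative refinement of the Bessel asymptotics, showing the remainder is $O(t^{2-2\sigma})$ uniformly, which combined with Parseval gives the required convergence. All other steps are by-now standard manipulations once the spectral decomposition and the Bessel representation of $\rho$ are in place.
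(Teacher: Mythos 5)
The paper does not prove Theorem~\ref{extensth}; it cites \cite[Theorem 2.1]{StingaVolz} and \cite[Theorem 1.1]{Stinga-Torrea}, and your outline reproduces precisely the strategy of those references: spectral expansion in the Neumann eigenbasis, identification of $\rho(t)=\frac{2^{1-\sigma}}{\Gamma(\sigma)}\,t^{\sigma}K_{\sigma}(t)$ with its small- and large-argument asymptotics, Parseval's identity to compute the weighted energy as a constant times $\|u\|^{2}_{\mathfrak{H}^{\sigma}(\Omega)}$, the Bessel subordination integral to pass to the heat-semigroup formula, and the termwise Dirichlet-to-Neumann limit. Your identification of the delicate step (the uniform $O(t^{2-2\sigma})$ control of $t^{1-2\sigma}\rho'(t)+\kappa_{\sigma}$ needed to pass to the limit in $\mathcal{H}^{\sigma}(\Omega)'$) is exactly where the cited proofs invest their technical work, so the proposal is correct and matches the cited approach.
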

For the proof of Theorem \ref{extensth} see \cite[Theorem 2.1]{StingaVolz} and \cite[Theorem 1.1]{Stinga-Torrea}. We also notice that the solution $\rho(y)$ to problem \eqref{ODE} is explicit and given in terms of modified Bessel of the third kind (see \cite[Section 3.1]{Stinga-Torrea}. \\
For any $u\in \mathfrak{H}^{\sigma}(\Omega)$, we will call the solution $w$ to problem \eqref{extens1}
the \emph{Neumann harmonic extension of $u$} and we write $w=E(u)$.

With this definition at hand, we assume that $f\in \mathfrak{H}^{-\sigma}(\Omega):=\left\{g\in H^{\sigma}(\Omega)^{\prime}:\langle g,1\rangle=0\right\}$: if $f\in
L^{2}(\Omega)$, this condition imposes $f$ to satisfy the compatibility condition \eqref{compatib}; furthermore  it can be proved (see \cite[Proposition
3]{PellacciMontef}) that $\mathfrak{H}^{-\sigma}(\Omega)$ is actually isomorphic to the dual space $(\mathfrak{H}^{\sigma}(\Omega))^{\prime}$ of
$\mathfrak{H}^{\sigma}(\Omega)$. Let us now consider problem \eqref{eq.0.1}, to which we associate the following extension problem
\begin{equation}
\left\{
\begin{array}
[c]{lll}%
\dfrac{\partial^{2}w}{\partial y^{2}}+\dfrac{1-2\sigma}{y}\dfrac{\partial w}{\partial y}+\Delta_{x}w=0  &  & in\text{
}\mathcal{C}_{\Omega},\\[15pt]
\ \dfrac{\partial w}{\partial \nu_{x}}=0 &  & on\text{ }\partial_{L}\mathcal{C}_{\Omega},\\[15pt]
\displaystyle{-\frac{1}{\kappa_{\sigma}}\lim_{y\rightarrow0^{+}}y^{1-2\sigma}\dfrac{\partial w}{\partial y}(x,y)}=f\left(  x\right)   &
&
in\text{ }\Omega.
\end{array}
\right.  \label{extens}%
\end{equation}
We give now the following, suitable definition of weak solution of problem \eqref{extens}:
\begin{definition}\label{weakdef1}
We say that a function $w\in
\mathscr{H}^{1}(\mathcal{C}_{\Omega},y^{1-2\sigma})$ is a weak solution to \eqref{extens} if
\begin{equation}\iint_{\mathcal{C}_{\Omega}}y^{1-2\sigma}\big(\nabla_{x} w\cdot\nabla_{x} \psi+
 w_{y}\psi_{y}\big)\,dx\,dy=\kappa_{\sigma}\langle f,\tr_\Omega \psi\rangle,\label{weakform1}\end{equation}
 for every $\psi\in \mathscr{H}^{1}(\mathcal{C}_{\Omega},y^{1-2\sigma})$.
\end{definition}
By the classical Lax-Milgram Theorem, the existence and uniqueness of weak solution to \eqref{extens} in the sense of definition \ref{weakdef1} is
immediate and the solution $w$ is explicit (see \cite{StingaVolz}).

As a direct consequence, we have that
\begin{lemma}
Let $f\in\mathfrak{H}^{-\sigma}(\Omega)$ and assume that $w \in
\mathscr{H}^{1}(\mathcal{C}_{\Omega},y^{1-2\sigma})$ is the weak solution to \eqref{extens}. Then $w$ takes the form \eqref{trueextens} and $u:=\tr w\in \mathfrak{H}^{\sigma}(\Omega)$ is the unique (weak) solution
in $\mathfrak{H}^{\sigma}(\Omega)$ to the linear problem \eqref{eq.0.1}.
\end{lemma}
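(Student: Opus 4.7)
\medskip

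\textbf{Proof plan.} The plan is to expand the weak solution $w$ in the Neumann eigenbasis in the $x$-variable, reduce the weak formulation to a family of one-dimensional ODE problems for the Fourier coefficients, and match them against the ODE \eqref{ODE} to recover formula \eqref{trueextens}. Once this representation is at hand, identifying $u=\tr w$ as the unique weak solution of \eqref{eq.0.1} is a direct algebraic reading of the resulting coefficients.

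\medskip

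\emph{Step 1: spectral decomposition of $w$.} For each $y\geq0$, since $w(\cdot,y)\in L^{2}(\Omega)$ and $(w(\cdot,y))_{\Omega}=0$ by the definition of $\mathscr{H}^{1}(\mathcal{C}_{\Omega},y^{1-2\sigma})$, I write
\[
w(x,y)=\sum_{k=1}^{\infty}c_{k}(y)\,\varphi_{k}(x),\qquad c_{k}(y):=\langle w(\cdot,y),\varphi_{k}\rangle_{L^{2}(\Omega)},
\]
(the $k=0$ term vanishes because $\varphi_{0}\equiv 1/\sqrt{|\Omega|}$ and the zero-mean condition holds). The goal is to show that $c_{k}(y)=\lambda_{k}^{-\sigma}\langle f,\varphi_{k}\rangle\,\rho(\lambda_{k}^{1/2}y)$.

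\medskip

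\emph{Step 2: test against $\eta(y)\varphi_{k}(x)$.} For each $k\geq 1$ and each $\eta\in C^{\infty}_{c}([0,\infty))$, the test function $\psi(x,y)=\eta(y)\varphi_{k}(x)$ lies in $\mathscr{H}^{1}(\mathcal{C}_{\Omega},y^{1-2\sigma})$ because $\int_{\Omega}\varphi_{k}\,dx=0$. Plugging $\psi$ into \eqref{weakform1}, using the $L^{2}(\Omega)$-orthonormality of $\{\varphi_{k}\}$ and the identity $\int_{\Omega}\nabla_{x}\varphi_{j}\cdot\nabla_{x}\varphi_{k}\,dx=\lambda_{k}\delta_{jk}$ (which follows from \eqref{eigen} together with the Neumann condition), I obtain
\[
\int_{0}^{\infty}y^{1-2\sigma}\bigl[\lambda_{k}c_{k}(y)\eta(y)+c_{k}'(y)\eta'(y)\bigr]\,dy=\kappa_{\sigma}\,\langle f,\varphi_{k}\rangle\,\eta(0).
\]
This is the weak formulation of the boundary-value problem
\[
-\bigl(y^{1-2\sigma}c_{k}'(y)\bigr)'+\lambda_{k}\,y^{1-2\sigma}c_{k}(y)=0\quad (y>0),\qquad -\lim_{y\to 0^{+}}y^{1-2\sigma}c_{k}'(y)=\kappa_{\sigma}\,\langle f,\varphi_{k}\rangle.
\]

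\medskip

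\emph{Step 3: matching with the ODE \eqref{ODE}.} The rescaling $c_{k}(y)=A_{k}\,\rho(\lambda_{k}^{1/2}y)$ turns the differential equation into \eqref{ODE}, and a direct computation using $-\lim_{t\to 0^{+}}t^{1-2\sigma}\rho'(t)=\kappa_{\sigma}$ gives
\[
-\lim_{y\to 0^{+}}y^{1-2\sigma}c_{k}'(y)=A_{k}\,\lambda_{k}^{\sigma}\,\kappa_{\sigma},
\]
hence $A_{k}=\lambda_{k}^{-\sigma}\langle f,\varphi_{k}\rangle$. Evaluating at $y=0$ yields $\langle u,\varphi_{k}\rangle_{L^{2}(\Omega)}=A_{k}=\lambda_{k}^{-\sigma}\langle f,\varphi_{k}\rangle$, and substituting back gives exactly \eqref{trueextens}. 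Uniqueness of $c_{k}(y)$ with finite $\mathscr{H}^{1}(\mathcal{C}_{\Omega},y^{1-2\sigma})$-norm selects the decaying branch $\rho$ automatically, which is why no second constant appears.

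\medskip

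\emph{Step 4: identification of $u=\tr w$.} With $\langle u,\varphi_{k}\rangle=\lambda_{k}^{-\sigma}\langle f,\varphi_{k}\rangle$ for $k\geq 1$ and $\langle u,\varphi_{0}\rangle=0$,
\[
\sum_{k=1}^{\infty}\lambda_{k}^{\sigma}\,|\langle u,\varphi_{k}\rangle|^{2}=\sum_{k=1}^{\infty}\lambda_{k}^{-\sigma}\,|\langle f,\varphi_{k}\rangle|^{2}=\|f\|_{\mathfrak{H}^{-\sigma}(\Omega)}^{2}<\infty,
\]
so $u\in\mathfrak{H}^{\sigma}(\Omega)$. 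Applying $(-\Delta_{N})^{\sigma}$ in $\mathfrak{H}^{\sigma}(\Omega)'$,
\[
(-\Delta_{N})^{\sigma}u=\sum_{k=1}^{\infty}\lambda_{k}^{\sigma}\,\langle u,\varphi_{k}\rangle\,\varphi_{k}=\sum_{k=1}^{\infty}\langle f,\varphi_{k}\rangle\,\varphi_{k}=f,
\]
the last equality being the Fourier expansion of $f\in\mathfrak{H}^{-\sigma}(\Omega)$ (note $\langle f,1\rangle=0$). Uniqueness in $\mathfrak{H}^{\sigma}(\Omega)$ is immediate since $(-\Delta_{N})^{\sigma}$ has trivial kernel on the mean-free subspace.

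\medskip

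The only delicate point is Step 2: verifying that the test functions $\eta(y)\varphi_{k}(x)$ are genuinely admissible in $\mathscr{H}^{1}(\mathcal{C}_{\Omega},y^{1-2\sigma})$ and that one can pass to the limit $y\to 0^{+}$ in the boundary term against the weight $y^{1-2\sigma}$. Both are handled by the zero-mean property of $\varphi_{k}$ for $k\geq 1$ and by a density argument approximating general $\eta$ with smooth compactly supported test functions, after which the boundary trace $\eta(0)$ is recovered through the standard trace theory in weighted Sobolev spaces (as developed in \cite{Stinga-Torrea, StingaVolz}).
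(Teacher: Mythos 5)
Your proof is correct. The paper states this Lemma without proof, merely calling it ``a direct consequence'' of the Lax--Milgram existence and the explicit form of the solution established in \cite{StingaVolz}; your spectral-decomposition argument (expanding $w$ in the Neumann eigenbasis, testing against $\eta(y)\varphi_k(x)$ to reduce to the Bessel-type ODE \eqref{ODE}, and reading off $\langle u,\varphi_k\rangle=\lambda_k^{-\sigma}\langle f,\varphi_k\rangle$) is exactly the computation that Theorem \ref{extensth} and the cited reference rely on, so it fills the gap along the paper's intended route. The handling of the admissibility of the test functions and the selection of the decaying branch of $\rho$ through the finite-energy requirement are both correctly flagged.
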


Moreover, the space $\mathscr{H}^{1}(\mathcal{C}_{\Omega},y^{1-2\sigma})$ for test functions in
definition \ref{weakdef1} can be actually replaced by the whole space $H^{1}(\mathcal{C}_{\Omega},y^{1-2\sigma})$, as the following result shows (see \cite{PellacciMontef}):
\begin{proposition}\label{enlargementtestspace}
Let $f\in \mathfrak{H}^{-\sigma}(\Omega)$ and $w$ be the weak solution to problem \eqref{extens}. Then:
\smallskip

\noindent {\rm (i)} there exist positive constants $C,\,k$ such that for all $y>0$
\[
\int_{\Omega}|\nabla w(x,y)|^{2}dx\leq C\,e^{-ky};
\]
\noindent {\rm (ii)} equation \eqref{weakform1} holds for any  function $\psi\in H^{1}_{loc}(\mathcal{C}_{\Omega},y^{1-2\sigma})$, such that there is a positive constant $C$, uniform w.r. to $y>0$, for which
\[
\|\psi(\cdot,y)\|_{L^{2}(\Omega)}\leq C.
\]
\end{proposition}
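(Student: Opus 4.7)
The plan is to prove \textbf{(i)} by a direct spectral decomposition of $w$ and then to deduce \textbf{(ii)} from \textbf{(i)} via a cutoff argument in the variable $y$.

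For part \textbf{(i)}, I would invoke the spectral representation provided by Theorem \ref{extensth},
\[
w(x,y)=\sum_{k=1}^{\infty}c_{k}\,\rho(\lambda_{k}^{1/2}y)\,\varphi_{k}(x),\qquad c_{k}=\langle u,\varphi_{k}\rangle_{L^{2}(\Omega)},
\]
with the sum starting at $k=1$ because $u=\mathrm{tr}\,w\in\mathfrak{H}^{\sigma}(\Omega)$ has zero mean. Orthonormality of $\{\varphi_{k}\}$ in $L^{2}(\Omega)$ and $\int_{\Omega}\nabla\varphi_{j}\cdot\nabla\varphi_{k}\,dx=\lambda_{k}\delta_{jk}$ give, by Parseval,
\[
\int_{\Omega}|\nabla_{x}w(x,y)|^{2}dx=\sum_{k=1}^{\infty}\lambda_{k}c_{k}^{2}\rho(\lambda_{k}^{1/2}y)^{2},\quad\int_{\Omega}|w_{y}(x,y)|^{2}dx=\sum_{k=1}^{\infty}\lambda_{k}c_{k}^{2}\rho'(\lambda_{k}^{1/2}y)^{2}.
\]
Since $\rho$ is an explicit rescaled modified Bessel function of the third kind (see \cite[Section~3.1]{Stinga-Torrea}), one has $\rho(t),|\rho'(t)|\leq Ce^{-t}$ for $t\geq 1$. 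Using the spectral gap $\lambda_{1}>0$ and the elementary fact that $s\mapsto s^{2}e^{-2sy}$ is decreasing for $s\geq 1/y$, one obtains $\lambda_{k}e^{-2\lambda_{k}^{1/2}y}\leq\lambda_{1}e^{-2\lambda_{1}^{1/2}y}$ for every $k\geq 1$ and every $y\geq 1/\lambda_{1}^{1/2}$. Summation yields
\[
\int_{\Omega}|\nabla w(x,y)|^{2}dx\leq C\lambda_{1}\|u\|_{L^{2}(\Omega)}^{2}\,e^{-2\lambda_{1}^{1/2}y}
\]
for $y\geq 1/\lambda_{1}^{1/2}$; the behaviour on $(0,1/\lambda_{1}^{1/2}]$ is absorbed into the constant thanks to the interior smoothness of $w$ away from $\{y=0\}$.

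For part \textbf{(ii)}, given $\psi\in H^{1}_{loc}(\mathcal{C}_{\Omega},y^{1-2\sigma})$ with $\|\psi(\cdot,y)\|_{L^{2}(\Omega)}\leq C$ uniform in $y$, I first subtract the $x$-mean: $\tilde\psi(x,y):=\psi(x,y)-(\psi(\cdot,y))_{\Omega}$. Because $\langle f,1\rangle=0$ (as $f\in\mathfrak{H}^{-\sigma}(\Omega)$) and $w(\cdot,y)$ has zero mean in $x$ for every $y$, both sides of \eqref{weakform1} are invariant under this replacement, so it suffices to prove the identity for $\tilde\psi$. I then introduce a smooth cutoff $\eta_{n}\in C_{c}^{\infty}([0,\infty))$ with $\eta_{n}\equiv 1$ on $[0,n]$, $\eta_{n}\equiv 0$ on $[2n,\infty)$, and $|\eta_{n}'|\leq C/n$, so that $\eta_{n}\tilde\psi\in\mathscr{H}^{1}(\mathcal{C}_{\Omega},y^{1-2\sigma})$ by virtue of its compact $y$-support. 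Testing Definition \ref{weakdef1} against $\eta_{n}\tilde\psi$, using $\mathrm{tr}_{\Omega}(\eta_{n}\tilde\psi)=\mathrm{tr}_{\Omega}\tilde\psi$ since $\eta_{n}(0)=1$, and expanding by Leibniz, we get
\[
\iint_{\mathcal{C}_{\Omega}}y^{1-2\sigma}\eta_{n}\bigl(\nabla_{x}w\cdot\nabla_{x}\tilde\psi+w_{y}\tilde\psi_{y}\bigr)dx\,dy+\iint_{\mathcal{C}_{\Omega}}y^{1-2\sigma}\eta_{n}'w_{y}\tilde\psi\,dx\,dy=\kappa_{\sigma}\langle f,\mathrm{tr}_{\Omega}\tilde\psi\rangle.
\]
Letting $n\to\infty$, the error term vanishes thanks to part \textbf{(i)} and the uniform $L^{2}$-bound on $\tilde\psi$:
\[
\Bigl|\iint y^{1-2\sigma}\eta_{n}'w_{y}\tilde\psi\,dx\,dy\Bigr|\leq\frac{C}{n}\int_{n}^{2n}y^{1-2\sigma}\|w_{y}(\cdot,y)\|_{L^{2}(\Omega)}\,dy\leq\frac{C'}{n}\int_{n}^{2n}y^{1-2\sigma}e^{-ky/2}\,dy\,\longrightarrow\,0,
\]
so the bulk integral converges to $\iint y^{1-2\sigma}(\nabla_{x}w\cdot\nabla_{x}\tilde\psi+w_{y}\tilde\psi_{y})dx\,dy$, yielding the desired identity.

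The main technical obstacle is justifying the convergence of the bulk term as $n\to\infty$, since $\nabla_{x,y}\tilde\psi$ is only locally in the weighted $L^{2}$-space and the integrand is not a priori absolutely integrable on $\mathcal{C}_{\Omega}$. This is where part \textbf{(i)} plays its second decisive role: splitting the $y$-integral at some $T>0$, the compact part $[0,T]$ is controlled via Cauchy-Schwarz using $w\in\mathscr{H}^{1}(\mathcal{C}_{\Omega},y^{1-2\sigma})$ and the local weighted $H^{1}$-regularity of $\tilde\psi$, while on $[T,\infty)$ the exponential decay of $\|\nabla_{x,y}w(\cdot,y)\|_{L^{2}(\Omega)}$ supplied by \textbf{(i)} dominates the (at most polynomially growing) $\|\nabla_{x,y}\tilde\psi(\cdot,y)\|_{L^{2}(\Omega)}$ coming from the local bounds, producing the integrable majorant needed for dominated convergence.
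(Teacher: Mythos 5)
Your plan (spectral computation for (i), a cutoff in $y$ for (ii)) is natural, but both halves contain a genuine gap, and neither is filled by the steps you describe.

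In (i), the Parseval identity and the decay of $\rho$ give the exponential bound cleanly for $y\ge\lambda_1^{-1/2}$, but the claim that the behaviour on $(0,\lambda_1^{-1/2}]$ is ``absorbed into the constant thanks to the interior smoothness of $w$ away from $\{y=0\}$'' does not stand. Interior regularity controls $w$ on compact subsets of $\mathcal C_\Omega$, i.e.\ \emph{away} from the hyperplane $\{y=0\}$; it gives no uniform bound as $y\to0^+$, which is exactly the region you need. And in fact, with only $f\in\mathfrak H^{-\sigma}(\Omega)$ one merely has $u=\tr w\in\mathfrak H^{\sigma}(\Omega)$, so the series $\sum_k\lambda_k|\langle u,\varphi_k\rangle|^2$ may diverge. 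Since $\rho$ is decreasing with $\rho(0)=1$, monotone convergence then yields
\[
\int_\Omega|\nabla_x w(x,y)|^2\,dx=\sum_{k\ge1}\lambda_k\,|\langle u,\varphi_k\rangle_{L^2(\Omega)}|^2\,\rho(\lambda_k^{1/2}y)^2\;\longrightarrow\;+\infty\qquad\text{as }y\to0^+,
\]
so the quantity you want to bound by $Ce^{-ky}$ can be unbounded near $y=0$. This cannot be fixed by choosing $C$ larger; you either need more regularity on $f$ (hence on $u$) or must restrict the range of $y$, and your write-up should say so explicitly rather than invoke interior smoothness.

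In (ii), the mean-subtraction step and the estimate of the commutator term $\iint y^{1-2\sigma}\eta_n' w_y\tilde\psi$ are fine, granting (i). The real difficulty, which you flag but do not resolve, is the passage to the limit in the bulk term $\iint y^{1-2\sigma}\eta_n\big(\nabla_x w\cdot\nabla_x\tilde\psi+w_y\tilde\psi_y\big)$. You assert that $\|\nabla_{x,y}\tilde\psi(\cdot,y)\|_{L^2(\Omega)}$ grows at most polynomially ``coming from the local bounds'', but the hypotheses give no such information: $\psi\in H^1_{\mathrm{loc}}(\mathcal C_\Omega,y^{1-2\sigma})$ together with a uniform $L^2$-bound on the slices of $\psi$ says nothing at all about the slicewise size of $\nabla\psi$ for large $y$, which may grow arbitrarily fast. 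Consequently the left-hand side of \eqref{weakform1} need not be absolutely convergent for such $\psi$, there is no integrable majorant, and dominated convergence is not available. The idea you are missing is to integrate by parts (in $x$ and in $y$) so that all derivatives land on $w$ rather than on $\psi$: using that $w$ is smooth in the interior, solves $w_{yy}+\tfrac{1-2\sigma}{y}w_y+\Delta_x w=0$ there, and has exponentially decaying slices, the resulting identity pairs $\psi$ only against exponentially decaying quantities, and the uniform bound $\|\psi(\cdot,y)\|_{L^2(\Omega)}\le C$ is then precisely what is needed. This is also the form in which the statement is actually used in the remark following the proposition, where the test identity is rewritten with $w_{yy}$ and $w_y$ appearing explicitly. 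Without this reformulation, the argument does not close.
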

\begin{remark}
\emph{Remark \ref{enlargementtestspace} allows to choose $\psi(x,y)=\eta(y)\theta(x)$, with $\eta\in C_{0}^{\infty}(0,\infty)$ and $\theta\in H^{1}(\Omega)$ as a test function in Definition \eqref{weakdef1}: therefore integrating by parts and using Fubini's Theorem we can conclude that (see also \cite{Caffarelli-Stinga})    \begin{equation}
\int_{\Omega}\nabla_{x} w(x,y)\cdot\nabla
\theta(x)\,dx=\int_{\Omega}\left(w_{yy}+\frac{1-2\sigma}{y}w_{y}\right)\theta(x)dx\label{weakformlevelbylevel}
\end{equation}
for all $\theta\in H_{1}(\Omega)$ and a.e. $y>0$. Moreover, since $w$ is defined by means of formula \eqref{trueextens} (or equivalently by \eqref{eq.5}), we also notice that $w$  is smooth in $\mathcal{C}_{\Omega}$ (see \cite{Stinga-Torrea}).}
\end{remark}

It is clear that if $u\in H^{\sigma}(\Omega)$ solves \eqref{eq.0.1}, then $u-u_{\Omega}$ is the unique solution to the same problem in the smaller space
$\mathfrak{H}^{\sigma}(\Omega)$: hence if $\overline{u}$ is the unique weak solution to \eqref{eq.0.1} in the space $\mathfrak{H}^{\sigma}(\Omega)$, all the
solutions in $H^{\sigma}(\Omega)$ to \eqref{eq.0.1} are of the form $u=\overline{u}+c$ for $c\in\R$.

As far as problem \eqref{eq.0} for $c>0$ is concerned, the space where to look for solutions of the extension problems is a direct generalization of the full description given in
\cite{StingaVolz} (see \cite{Caffarelli-Stinga}).
Indeed, if $u_{\Omega}\neq 0$ then the function
\begin{equation}\label{extnonzeraver}
w(x,y)=\sum_{k=0}^{\infty}\rho(\lambda_{k}^{1/2}y)\,
\langle u,\varphi_k\rangle_{L^2(\Omega)}\varphi_{k}(x)
\end{equation}
is in general not in $L^2(\mathcal{C}_{\Omega},y^{1-2\sigma})$ but only its gradient is (see for instance the computations in \cite[Lemma 4.3]{ColoradoBrandle}). Therefore, in order to give a suitable definition of the Neumann
harmonic extension of $u$, we first solve the extension problem \eqref{extens1}
with initial data $\tilde{u}=u-u_\Omega$, in order to find a function $\tilde{w}=E(\tilde{u})$. Then we define
$$w=E(u):=\tilde{w}+u_\Omega,$$
which clearly coincides with \eqref{extnonzeraver}.\\ Using the fact that the fractional Neumann Laplacian
does not see constants, we have $$(-\Delta_N)^{\sigma}u=(-\Delta_N)^{\sigma}\tilde{u}=
-\frac{1}{\kappa_{\sigma}}\lim_{y\rightarrow0^{+}}y^{1-2\sigma}\dfrac{\partial \tilde{w}}{\partial y}(x,y)=
--\frac{1}{\kappa_{\sigma}}\lim_{y\rightarrow0^{+}}y^{1-2\sigma}\dfrac{\partial w}{\partial y}(x,y),\quad\hbox{in}~\mathcal{H}^{\sigma}(\Omega)',$$
thus we recover the local interpretation \eqref{DirchNeuma} of the fractional Neumann Laplacian. \\
\noindent Since \eqref{extnonzeraver} is \emph{formally} a solution to \eqref{extens}, we have to define the right functional space where this extension
belongs to. Following \cite{StingaVolz}, we introduce the space $\mathsf{H}^{\sigma,c}(\mathcal{C}_{\Omega})$ as the completion of
$H^{1}(\mathcal{C}_{\Omega},y^{1-2\sigma})$ under the scalar product
\begin{equation*}\label{eq:dos estrellas}
(w,\psi)_{\sigma,c}=\iint_{\mathcal{C}_{\Omega}} y^{1-2\sigma}\nabla_{x,y} w\cdot\nabla_{x,y} \psi\,dx\,dy+c\kappa_{\sigma}\int_{\Omega}(\tr_{\Omega}w)(\tr_{\Omega}\psi)\,dx.
\end{equation*}
We denote by $\|\cdot\|_{\sigma,c}$ the associated norm:
\begin{equation*}
\|w\|_{\sigma,c}^2=\iint_{\mathcal{C}_{\Omega}}y^{1-2\sigma}|\nabla_{x,y} w|^2\,dx\,dy+c\kappa_{\sigma}\int_{\Omega}(\tr_{\Omega}w)^2\,dx.
 \end{equation*}
Notice that, for each $c>0$,
\[
H^{1}(\mathcal{C}_{\Omega},y^{1-2\sigma})\subset\mathsf{H}^{\sigma,c}(\mathcal{C}_{\Omega}),
\]
as Hilbert spaces, where the inclusion is strict, since constant functions belong to
$\mathsf{H}^{\sigma,c}(\mathcal{C})$ but not to $H^{1}(\mathcal{C}_{\Omega},y^{1-2\sigma})$. \\[0.5pt]
From \cite[Theorem 2.4, Lemma 2.5 ]{StingaVolz} it follows that a unique trace embedding from $\mathsf{H}^{\sigma,c}(\mathcal{C})$ to $H^{\sigma}(\Omega)$ can be
defined. Then we can give the following definition of weak solution for linear problems of the following form:
\begin{equation}
\left\{
\begin{array}
[c]{lll}%
\dfrac{\partial^{2}w}{\partial y^{2}}+\dfrac{1-2\sigma}{y}\dfrac{\partial w}{\partial y}+\Delta_{x}w=0  &  & in\text{
}\mathcal{C}_{\Omega},\\[15pt]
\ \dfrac{\partial w}{\partial \nu_{x}}=0 &  & on\text{ }\partial_{L}\mathcal{C}_{\Omega},\\[15pt]
\displaystyle{-\frac{1}{\kappa_{\sigma}}\lim_{y\rightarrow0^{+}}y^{1-2\sigma}\dfrac{\partial w}{\partial y}(x,y)}+c\,w(x,0)=f\left(  x\right)   &
&
in\text{ }\Omega:
\end{array}
\right.  \label{extenszero}%
\end{equation}

\begin{definition}
Let $f\in H^{\sigma}(\Omega)^{\prime}$. We say that a function $w\in
\mathsf{H}^{\sigma,c}(\mathcal{C})$ is a weak solution to \eqref{extenszero} if
\begin{equation*}(w,\psi)_{c,\sigma}=\kappa_{\sigma}\langle f,\tr_\Omega \psi\rangle,\label{eq.33}\end{equation*}
 for every $\psi\in \mathsf{H}^{\sigma,c}(\mathcal{C})$.
 \end{definition}
By the Lax-Milgram Theorem again we easily infer that a unique, explicit weak solution to \eqref{extenszero} exists (see \cite[Lemma 3.3]{StingaVolz}), and its trace $u=\tr w$ is the unique solution in
$H^{\sigma}(\Omega)$ to the linear problem \eqref{eq.0}. For further regularity properties concerning solutions to problems of the type \eqref{eq.0} we refer
the reader to \cite{StingaVolz} for the case $s=1/2$ and \cite{Caffarelli-Stinga} for a general exponent $\sigma\in(0,1)$.
\begin{remark}\emph{
In the radial Dirichlet problems appearing in our comparison theorems, namely the problems
\begin{equation*} \label{symmetriz}
\left\{
\begin{array}
[c]{lll}%
\left(  -\gamma\Delta\right)^{\sigma}v+cv=g(x)   &  & in\text{ }%
B,\\ [10pt]
v=0  &  & on\text{ }\partial B
\end{array}
\right. %
\end{equation*}
for some radial function $g$ and a positive constant $\gamma$, the operator $\left(  -\gamma\Delta\right)^{\sigma}$ is understood as the spectral
\emph{Dirichlet} fractional Laplacian $(-\gamma\Delta_{D})^{\sigma}$: for all the most useful properties of such operator we refer the interested reader to
the \cite{Cabre-Tan}, \cite{ColoradoBrandle}. \\ Finally, from now on we will always omit the subscripts in the powers of the Laplacian, since it will be always clear by the context which boundary conditions are chosen, so that the spectral definition of the operator changes accordingly.}
\end{remark}

%%%%%%%%%%%%%%%%%%%%%%%%%%%%%%%%%%%%%%%%%%%%%%%%%%%%%%%%%%%%%%%%%%%%%%%%%%%%%%%%%%%%%%%%%%%%%%%%%%%%%%%%%%%%%%%%%%%%%%%%%%%%%%%%%%%%%%%%%%%%%%%%%%%%%%%%%%%%%%%
\section{The main result}\label{Sec4}
The aim of this Section is to derive sharp estimates via symmetrization for solutions to fractional Neumann problems of the type \eqref{eq.0.1}. According to what
explained in the introduction, we will compare problem \eqref{eq.0.1} with the following fractional radially symmetric problem
\begin{equation} \label{symmetrizz}
\left\{
\begin{array}
[c]{lll}%
\left(  -\gamma\Delta\right)^{\sigma}v=f_{1}^{\#}(x)+f_{2}^{\#}\left(  x\right)   &  & in\text{ }%
B,\\ [10pt]
v=0  &  & on\text{ }\partial B,
\end{array}
\right. %
\end{equation}
where $f_{1}$, $f_{2}$ are the positive and negative part of $f$ respectively, $B$ is the ball centered at the origin with Lebesgue measure $|\Omega|/2$
and
$\gamma=1/(N\omega_{N}^{1/N}Q)^{2}$, being $N\omega_{N}^{1/N}$
and $Q$ the best constants in the isoperimetric and relative isoperimetric (see \eqref{relisop}) inequality respectively .
Then we associate to the solution $v$ to \eqref{symmetrizz} its \emph{Dirichlet} harmonic extension $\xi$, solving (see \cite[Theorem 1.1]{Stinga-Torrea})
\begin{equation}
\left\{
\begin{array}
[c]{lll}%
\dfrac{\partial^{2}\xi}{\partial y^{2}}+\dfrac{1-2\sigma}{y}\dfrac{\partial\xi}{\partial y}+\gamma\Delta_{x}\xi=0  &  & in\text{
}\mathcal{C}_{B},\\[15pt]
\xi=0 &  & on\text{ }\partial_{L}\mathcal{C}_{B},\\[15pt]
\displaystyle{-\frac{1}{\kappa_{\sigma}}\lim_{y\rightarrow0^{+}}y^{1-2\sigma}\dfrac{\partial\xi}{\partial y}(x,y)}=f_{1}^{\#}+f_{2}^{\#}&
&
in\text{ }B.
\end{array}
\right.  \label{extensionsymm}%
\end{equation}

According to \cite{Cabre-Tan},\cite{ColoradoBrandle} (see also the nice Appendix in \cite{SirBonfVaz})  we have that $v\in H(B)$, where
\begin{equation*}
H(B)=\left\{
\begin{array}
[c]{lll}%
H^{\sigma}(\Omega)&& if\,\sigma\in(0,1/2)\\
\\
H_{00}^{1/2}(\Omega) && if\,\sigma=1/2\\
\\
H_{0}^{\sigma}(\Omega) && if\,\sigma\in(1/2,1);
\end{array}
\right.
\end{equation*}
moreover the solution $\xi$ belongs to the energy space $X_{0}^{\sigma}(\mathcal{C}_{\Omega})$
defined as the completion of $C_{0}^{\infty}(\Omega\times[0,\infty))$ with respect to the norm
\begin{equation*}
\Vert \psi\Vert_{X_{0}^{\sigma}(\mathcal{C}_{\Omega})}:=\left(  \iint_{\mathcal{C}_{\Omega}}y^{1-2\sigma}|\nabla \psi(x,y)|^{2}\,dxdy\right)  ^{1/2}.
\end{equation*}

Our main goal is to compare any solution $u$ to \eqref{eq.0.1} with the solution $v$ to
\eqref{symmetrizz}. The most direct (and natural) way to proceed is to compare the Neumann extension $w$ of $u$, that is the
solution to \eqref{extens}, with the solution
$\xi$ to the extension problem \eqref{extensionsymm}. Before stating our main result, for all $y>0$ we define the function
\begin{equation}
\lambda(y)=\mathsf{m}((w(\cdot,y)))\label{functiont(y)}
\end{equation}
where $\mathsf{m}((w(\cdot,y)))$ is the median of the function $w(\cdot,y)$ (see \eqref{median}). Moreover, we set
\[
w_{1}(x,y)=[w(x,y)-\lambda(y)]^{+}\,\quad w_{2}(x,y)=[w(x,y)-\lambda(y)]^{-}.
\]
It is clear that, for all fixed $y\geq0$,
\[
|\left\{x\in\Omega:w(x,y)>\lambda(y)\right\}|\leq\frac{|\Omega|}{2}.
\]
and
\[
|\left\{x\in\Omega:w(x,y)\geq \lambda(y)\right\}|\geq\frac{|\Omega|}{2}
\]
thus
\begin{equation}
|\text{sprt}\,w_{i}(\cdot,y)|\leq |\Omega|/2,\label{support}
\end{equation}
for all $i=1,2$.\\
Then we can prove the following result
\begin{theorem}\label{comparisontheorem}
Let us choose a source term $f\in \mathfrak{H}^{-\sigma}(\Omega)$ and let $u\in H^{\sigma}(\Omega)$ be any solution to \eqref{eq.0.1}.
Assume that $w$ is the Neumann harmonic extension of $u$, namely the solution to the extension problem \eqref{extens} associated to \eqref{eq.0.1}. Let $v$ be the solution to \eqref{symmetrizz} and $\xi$ its Dirichlet harmonic extension, solving \eqref{extensionsymm}. Then for all
$y\geq0$, we have
\[
w_{1}^{\#}(\cdot,y)+w_{2}^{\#}(\cdot,y)\prec \xi(\cdot,y)
\]
that is
\begin{equation}
\int_{0}^{s}(w^{\ast}_{1}(\tau,y)+w^{\ast}_{2}(\tau,y))d\tau\leq\int_{0}^{s}\xi^{\ast}(\tau,y)d\tau\label{concentrestim}
\end{equation}
for all $s\in[0,|\Omega|/2]$.
\end{theorem}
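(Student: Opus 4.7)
The strategy is to adapt the classical Maderna--Salsa symmetrization argument for Neumann problems to the local extension framework, carrying out the rearrangement in the $x$--variable with $y$ frozen on the Neumann harmonic extension $w=E(u)$. The key geometric point is that the support bound \eqref{support} for the shifted parts $w_{1},w_{2}$ makes the relative isoperimetric inequality \eqref{relisop} applicable to the level sets $\{w_{i}(\cdot,y)>t\}$ with the minimum attained by the level set itself. Concretely, I would start from the level-by-level weak formulation stated in the Remark following Proposition \ref{enlargementtestspace},
$$\int_{\Omega}\nabla_{x}w(x,y)\cdot\nabla\theta(x)\,dx=\int_{\Omega}\Big(w_{yy}+\frac{1-2\sigma}{y}w_{y}\Big)\theta(x)\,dx,\qquad\theta\in H^{1}(\Omega),$$
and test it with the Talenti truncation $\theta=T_{t,h}(w_{i}(\cdot,y))$. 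Passing to the limit $h\to 0^{+}$ and combining the coarea formula, the Cauchy--Schwarz inequality, and \eqref{relisop} produces, for $\Phi_{i}(s,y):=\int_{0}^{s}w_{i}^{\ast}(\tau,y)\,d\tau$ and $s\in(0,|\Omega|/2)$, a differential inequality of the form
$$\frac{s^{2(1-1/N)}}{Q^{2}}\bigl(-\Phi_{i,ss}(s,y)\bigr)\leq (-1)^{i+1}\int_{\{w_{i}(\cdot,y)>w_{i}^{\ast}(s,y)\}}\Big(w_{yy}+\frac{1-2\sigma}{y}w_{y}\Big)dx.$$

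To convert the right-hand side into $y$--derivatives of $\Phi_{i}$ I would apply Bandle's formula \eqref{Rakotoson} for the first-order $y$--derivative and the Ferone--Mercaldo formula \eqref{Ferone-Mercaldo} for the second-order one, both applied to $w_{i}$; the quadratic correction in \eqref{Ferone-Mercaldo} is nonpositive by Cauchy--Schwarz and can be discarded while preserving the direction of the inequality. The $y$--dependent median $\lambda(y)$ produces extra terms proportional to $s\,\lambda'(y)$ and $s\,\lambda''(y)$, but by construction of the median these enter with opposite signs on the $w_{1}$-- and $w_{2}$--sides, so summing the two inequalities makes them cancel exactly. Setting $W:=\Phi_{1}+\Phi_{2}$ and using $\gamma(N\omega_{N}^{1/N})^{2}=1/Q^{2}$, the summed bound becomes the degenerate-elliptic inequality
$$-\gamma(N\omega_{N}^{1/N})^{2}s^{2-2/N}\,W_{ss}(s,y)\leq W_{yy}(s,y)+\frac{1-2\sigma}{y}W_{y}(s,y)\qquad\text{on }(0,|\Omega|/2)\times(0,\infty).$$

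The exact same chain of identities, applied to the radial Dirichlet extension $\xi$ of \eqref{extensionsymm}, yields the analogous \emph{equation} satisfied by $\Psi(s,y):=\int_{0}^{s}\xi^{\ast}(\tau,y)\,d\tau$: the relative isoperimetric step is replaced by the classical one (with constant $N\omega_{N}^{1/N}$), and every Cauchy--Schwarz becomes an equality because $\xi(\cdot,y)$ is already radially decreasing. The proof is then completed by showing $W\leq\Psi$ via a weighted maximum principle for the resulting degenerate-elliptic operator in $(s,y)$, in the spirit of the comparison arguments employed in \cite{VazVol2,VazVolSire}. The boundary data are matched as follows: at $y=0$ the Neumann trace relation \eqref{DirchNeuma} combined with the Hardy--Littlewood inequality \eqref{HardyLit} gives the correct inequality between the conormal traces of $W$ and $\Psi$ with right-hand side $\int_{0}^{s}(f_{1}^{\ast}(\tau)+f_{2}^{\ast}(\tau))\,d\tau$; at $s=0$ both sides vanish; at $s=|\Omega|/2$ the compatibility condition \eqref{compatib} is used; and as $y\to\infty$ the exponential decay granted by Proposition \ref{enlargementtestspace}(i) forces both $W$ and $\Psi$ to vanish.

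The main technical obstacle is expected in the transport step above: the rigorous application of the Ferone--Mercaldo formula \eqref{Ferone-Mercaldo} to the shifted parts $w_{i}=(w\mp\lambda)^{\pm}$ is delicate because the $W^{2,\infty}$ hypothesis of that formula is granted for $w$ (which is smooth in $\mathcal{C}_{\Omega}$) but not a priori for $w_{i}$, so an approximation argument together with a careful analysis of the singular contributions coming from differentiating the indicator of $\{w\gtrless\lambda(y)\}$ in $y$ is required. In particular, one needs to verify that the function $\lambda(y)$ is regular enough for $\lambda'(y),\lambda''(y)$ to make sense in the arguments above, and that the promised cancellation of the median corrections between $w_{1}$ and $w_{2}$ really takes place at the distributional level.
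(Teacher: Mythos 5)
Your proposal reproduces the paper's argument almost verbatim: after (a cosmetic) change of variables $z=(y/2\sigma)^{2\sigma}$, the paper tests the level-by-level weak formulation with Talenti truncations of $w_1,w_2$, applies the relative isoperimetric inequality plus coarea and Cauchy--Schwarz, invokes the Ferone--Mercaldo second-order formula (discarding the nonpositive quadratic correction), observes that the $\lambda''$-terms cancel upon adding the two inequalities, uses Bandle/Mossino--Rakotoson plus Hardy--Littlewood for the conormal trace at $z=0$, and concludes with a maximum principle for $U-V$ on $(0,|\Omega|/2)\times(0,\infty)$. The only small slip is attributing $\partial_s W(|\Omega|/2,\cdot)=0$ to \eqref{compatib} rather than to the support bound \eqref{support}; otherwise the approach, and even the technical caveats you flag about applying \eqref{Ferone-Mercaldo} to $w_i=(w\mp\lambda)^{\pm}$, match the paper's (the latter being left implicit there).
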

\begin{proof} We will borrow some ideas from \cite{Bramanti}.
To start with, we first notice that one can always reduce to consider smooth source data $f$, since in the less regular case we can obtain the estimate \eqref{concentrestim} through an approximation argument. According to \cite{dBVol}, using the change of variables
\[
z=\left(\frac{y}{2\sigma}\right)  ^{2\sigma},
\]
problems \eqref{eq.0.1} and \eqref{extensionsymm} become respectively

\begin{equation}
\left\{
\begin{array}
[c]{lll}%
-z^{\nu}\dfrac{\partial^{2}w}{\partial z^{2}}-\Delta_{x}w=0 &  & in\text{
}\mathcal{C}_{\Omega}\\
&  & \\
\dfrac{\partial w}{\partial \nu_{x}}=0 &  & on\text{ }\partial_{L}\mathcal{C}_{\Omega}\\
&  & \\
-\dfrac{\partial w}{\partial z}\left(  x,0\right)  =%
\beta_{\sigma}f\left(  x\right)  &  & in\text{ }\Omega,
\end{array}
\right.  \label{eq.0.1bis}%
\end{equation}

and

\begin{equation}
\left\{
\begin{array}
[c]{lll}%
-z^{\nu}\dfrac{\partial^{2}\xi}{\partial z^{2}}-\gamma\Delta_{x}\xi=0 &  & in\text{
}\mathcal{C}_{B}\\
&  & \\
\xi=0 &  & on\text{ }\partial_{L}\mathcal{C}_{B}\\
&  & \\
-\dfrac{\partial \xi}{\partial z}\left(  x,0\right)  =%
\beta_{\sigma}\left(f_{1}^{\#}\left(  x\right)+f_{2}^{\#}\left(  x\right)\right)  &  & in\text{ }B.
\end{array}
\right.  \label{extensionsymmbis}%
\end{equation}
where
$$\nu:=\left(  2\sigma-1\right)  /\sigma$$
and
\[
\beta_{\sigma}:=(2\sigma)^{2\sigma-1}\kappa_{\sigma}.
\]
Then, the problem reduces to prove the concentration comparison between the solutions $w(x,z)$ and $\xi(x,z)$ to \eqref{eq.0.1bis}-\eqref{extensionsymmbis} respectively.

Notice that by the weak formulation \eqref{weakformlevelbylevel} we have
\begin{equation}
\int_{\Omega}\nabla_{x}w(x,z)\cdot\nabla\theta(x)\,dx=z^{\nu}\int_{\Omega}\theta(x)\,w_{zz}(x,z)\,dx\label{weakformy}
\end{equation}
for all $\theta=\theta(x)\in H^{1}(\Omega)$ and a.e. $z>0$. Then,
let us fix $z>0,\,h>0$, $t\geq0$ and plug in \eqref{weakformy} the test function
\begin{equation*}
\varphi_{h,1}^{z}\left(  x\right)  =\left\{
\begin{array}
[c]{lll}%
1 &  & if\text{ \ }w_{1}(x,z) \geq t+h\\
&  & \\
\dfrac{ w_{1}(x,z) -t}{h}\, &  & if\text{
\ }t< w_{1}(x,z) <t+h\\
&  & \\
0 &  & if\text{ \ } w_{1}(x,z) \leq t.\text{ }%
\end{array}
\right.
\end{equation*}
Therefore, passing to the limit as $h\rightarrow0$
\begin{equation}
-z^{\nu}\int_{\left\{x:\,w_{1}(x,z)>t\right\}}\frac{\partial^{2}w}{\partial z^{2}}dx-\frac{d}{dt}\int_{\left\{x:\,w_{1}(x,z)>t\right\}}|\nabla_{x}w_{1}(x,z)|^{2}dx=0.\label{wekform}
\end{equation}
Using the relative isoperimetric inequality \eqref{relisop}, the coarea formula and the bounds \eqref{support} we have
\begin{align}
-\frac{d}{dt}\int_{\left\{x:\,w_{1}(x,z)>t\right\}}|\nabla_{x}w_{1}(x,z)|dx&=P(\left\{x:\,w_{1}(x,z)>t\right\};\Omega)\nonumber\\&\geq
Q^{-1}[\min\left\{\mu_{w_{1}}(t,z),|\Omega|-\mu_{w_{1}}(t,z)\right\}]^{1-1/N}\nonumber\\&=Q^{-1}[\mu_{w_{1}}(t,z)]^{1-1/N}\label{isop}.
\end{align}

Then inserting \eqref{isop} into \eqref{wekform} and using the Cauchy-Schwartz inequality,
\[
Q^{-2}[\mu_{w_{1}}(t,z)]^{2-2/N}\leq\ z^{\nu}\left(\int_{w_{1}(\cdot,z)>t}\frac{\partial^{2}w}{\partial z^{2}}dx\right)\left(-\frac{\partial\mu_{w_{1}}}{\partial
t}\right)
\]
hence a change of variables leads to
\[
-z^{\nu}\int_{\left\{x:\,w_{1}(x,z)>w_{1}^{\ast}(s,z)\right\}}\frac{\partial^{2}w}{\partial z^{2}}dx-Q^{-2}s^{2-2/N}\frac{\partial^{2}w_{1}^{\ast}}{\partial s^{2}}\leq0.
\]
Now, observe that on the set $\left\{x:w_{1}(x,z)>w_{1}^{\ast}(s,z)\right\}$ we have
\[
\frac{\partial^{2}w}{\partial z^{2}}=\frac{\partial^{2}w_{1}}{\partial z^{2}}+\lambda^{\prime\prime}(z)
\]
thus
\[
-z^{\nu}\int_{\left\{x:\,w_{1}(x,z)>w_{1}^{\ast}(s,z)\right\}}\left(\frac{\partial^{2}w_{1}}{\partial
z^{2}}+\lambda^{\prime\prime}(z)\right)dx-Q^{-2}s^{2-2/N}\frac{\partial^{2}w_{1}^{\ast}}{\partial s^{2}}\leq0,
\]
so the second order derivation formula \eqref{Ferone-Mercaldo} by Ferone-Mercaldo shows that
\begin{equation}
-z^{\nu}\int_{0}^{s}\left(\frac{\partial^{2}w_{1}^{\ast}}{\partial
z^{2}}+\lambda^{\prime\prime}(z)\right)d\tau-Q^{-2}s^{2-2/N}\frac{\partial^{2}w_{1}^{\ast}}{\partial s^{2}}\leq0,\label{firstestim}
\end{equation}
for a.e. $s\in (0,|\Omega|/2)$ and $z>0$.\\
Now we use in \eqref{weakformy} the test function
\begin{equation*}
\varphi_{h,2}^{z}\left(  x\right)  =\left\{
\begin{array}
[c]{lll}%
1 &  & if\text{ \ }w_{2}(x,z) \geq t+h\\
&  & \\
\dfrac{ w_{2}(x,z) -t}{h}\, &  & if\text{
\ }t< w_{2}(x,z) <t+h\\
&  & \\
0 &  & if\text{ \ } w_{2}(x,z) \leq t,\text{ }%
\end{array}
\right.
\end{equation*}
in order to obtain
\begin{equation}
-z^{\nu}\int_{\left\{x:\,w_{2}(x,z)>t\right\}}\frac{\partial^{2}w}{\partial z^{2}}dx+\frac{d}{dt}\int_{\left\{x:\,w_{2}(x,z)>t\right\}}|\nabla_{x}w_{2}(\cdot,z)|^{2}dx=0\label{wekform2}.
\end{equation}
The coarea formula and relative isoperimetric inequality  applied to $w_{2}(\cdot,z)$ give
\begin{align*}
-\frac{d}{dt}\int_{\left\{x:\,w_{2}(x,z)>t\right\}}|\nabla_{x}w_{2}(x,z)|dx\geq Q^{-1}[\mu_{w_{2}}(t,z)]^{1-1/N}
\end{align*}
then from \eqref{wekform2}
\[
\(-z^{\nu}\int_{\left\{x:\,w_{2}(x,z)>t\right\}}\frac{\partial^{2}w}{\partial z^{2}}dx\)\left(-\frac{\partial \mu_{w_{2}}}{\partial t}\right)\geq
Q^{-2}[\mu_{w_{2}}(t,z)]^{2-2/N}
\]
which yields
\begin{equation}
-Q^{-2}s^{2-2/N}\frac{\partial^{2}w_{2}^{\ast}}{\partial s^{2}}\leq -z^{\nu}\int_{\left\{x:\,w_{2}(x,z)>w_{2}^{\ast}(s,z)\right\}}\frac{\partial^{2}w}{\partial z^{2}}dx.\label{intermediate}
\end{equation}
Now, observe that on the set $\left\{x:w_{2}(x,z)>w_{2}^{\ast}(s,z)\right\}$ we have
\[
\frac{\partial^{2}w}{\partial z^{2}}=-\frac{\partial^{2}w_{2}}{\partial z^{2}}+\lambda^{\prime\prime}(z)
\]
hence by \eqref{intermediate}
\begin{equation*}
-Q^{-2}s^{2-2/N}\frac{\partial^{2}w_{2}^{\ast}}{\partial s^{2}}\leq z^{\nu}\int_{\left\{x:\,w_{2}(x,z)>w_{2}^{\ast}(s,z)\right\}}\left(\frac{\partial^{2}w_{2}}{\partial
z^{2}}-\lambda^{\prime\prime}(z)\right)dx
\end{equation*}
and by \eqref{Ferone-Mercaldo}
\begin{equation}
-Q^{-2}s^{2-2/N}\frac{\partial^{2}w_{2}^{\ast}}{\partial s^{2}}\leq z^{\nu}\int_{0}^{s}\left(\frac{\partial^{2}w_{2}^{\ast}}{\partial
z^{2}}-\lambda^{\prime\prime}(z)\right)d\tau.\label{secondestim}
\end{equation}
Then, adding \eqref{firstestim} to \eqref{secondestim} we have
\begin{equation}
-z^{\nu}\int_{0}^{s}\frac{\partial^{2}}{\partial z^{2}}(w_{1}^{\ast}+w_{2}^{\ast})d\tau-Q^{-2}s^{2-2/N}\frac{\partial^{2}}{\partial
s^{2}}(w_{1}^{\ast}+w_{2}^{\ast})\leq 0\label{thirdestimate}
\end{equation}
for a.e. $s\in (0,|\Omega|/2)$ and $z>0$.\\
Next, we set
\[
U(s,z)=\int_{0}^{s}(w_{1}^{\ast}(\tau,z)+w_{2}^{\ast}(\tau,z))d\tau.
\]
Now we observe that by the main result in \cite{Stinga-Torrea},
\[
-\lim_{z\rightarrow0^{+}}\frac{\partial w}{\partial z}(\cdot,y)=\beta_{\sigma}f\,\quad \,in\,\,L^{2}(\Omega)
\]
then
\[
-\frac{\partial w}{\partial z}(\cdot,z)=\beta_{\sigma}f+\mathcal{R}(\cdot,z)\,\quad \,in\,\,L^{2}(\Omega)\,, z\rightarrow0^{+}
\]
where the remaining term $\mathcal{R}=\mathcal{R}(x,z)$ is such that
\[
\lim_{z\rightarrow0^+}\mathcal{R}(\cdot,z)=0\quad \,in\,\,L^{2}(\Omega).
\]
Therefore using the first order derivation formula \eqref{Rakotoson} and the Hardy-Littlewood inequality \eqref{HardyLit} we have, for small $z>0$,
\begin{align}
-\frac{\partial U}{\partial z}(s,z)
&=-\int_{\left\{x:\,w_{1}(x,z)>w_{1}^{\ast}(s,z)\right\}}\frac{\partial w_{1}}{\partial
z}(x,z)dx-\int_{\left\{x:\,w_{2}(x,z)>w_{2}^{\ast}(s,z)\right\}}\frac{\partial w_{2}}{\partial z}(x,z)dx
\nonumber\\
&
=-\int_{\left\{x:\,w_{1}(x,z)>w_{1}^{\ast}(s,z)\right\}}\(\frac{\partial w}{\partial z}(x,z)-\lambda^{\prime}(z)\)dx-\int_{\left\{x:\,w_{2}(x,z)>w_{2}^{\ast}(s,z)\right\}}\(-\frac{\partial
w}{\partial z}(x,z)+\lambda^{\prime}(z)\)dx\nonumber\\
&=-\int_{\left\{x:\,w_{1}(x,z)>w_{1}^{\ast}(s,z)\right\}}\frac{\partial w}{\partial z}(x,z)\,dx+\int_{\left\{x:\,w_{2}(x,z)>w_{2}^{\ast}(s,z)\right\}}\frac{\partial
w}{\partial z}(x,z)dx\nonumber\\
&=\beta_{\sigma}\int_{\left\{x:\,w_{1}(x,z)>w_{1}^{\ast}(s,z)\right\}}f(x)\,dx-\beta_{\sigma}\int_{\left\{x:\,w_{2}(x,z)>w_{2}^{\ast}(s,z)\right\}}f(x)dx\nonumber\\
&+\int_{\left\{x:\,w_{1}(x,z)>w_{1}^{\ast}(s,z)\right\}}\mathcal{R}(x,z)\,dx-\int_{\left\{x:\,w_{2}(x,z)>w_{2}^{\ast}(s,z)\right\}}\mathcal{R}(x,z)\,dx\nonumber\\
&\leq \beta_{\sigma}\left(\int_{0}^{s}f_{1}^{\ast}(\tau)d\tau+\int_{0}^{s}f_{2}^{\ast}(\tau)d\tau\right)
+\int_{\left\{x:w_{1}(x,z)>w_{1}^{\ast}(s,z)\right\}}\mathcal{R}(x,z)\,dx-\int_{\left\{x: w_{2}(x,z)>w_{2}^{\ast}(s,z)\right\}}\mathcal{R}(x,z)\,dx
\label{mossinorak}
\end{align}
then passing to the limit as $z\rightarrow 0^{+}$ yields
\begin{align}
-\frac{\partial U}{\partial z}(s,0)
\leq\beta_{\sigma}\left(\int_{0}^{s}f_{1}^{\ast}(\tau)d\tau+\int_{0}^{s}f_{2}^{\ast}(\tau)d\tau\right).\label{mossinoraky}
\end{align}
Hence by inequalities \eqref{thirdestimate}, \eqref{mossinoraky} we find that $U$ satisfies
\begin{equation} \label{principsystem}
\left\{
\begin{array}
[c]{lll}%
-z^{\nu}\dfrac{\partial^{2}U}{\partial z^{2}}-Q^{-2}s^{2-2/N}\dfrac{\partial^{2}U}{\partial s^{2}}\leq0, \quad for\text{
}a.e.\,(s,z)\in (0,|\Omega|/2)\times(0,\infty)\\[15pt]
U(0,z)=\dfrac{\partial U}{\partial s}(|\Omega|/2,z)=0 \quad \forall z>0\\[15pt]
\dfrac{\partial U}{\partial{z}}(s,0)\geq -\beta_{\sigma}{\displaystyle\int_{0}^{s}(f_{1}^{\ast}(\tau)+f_{2}^{\ast}(\tau))d\tau},\quad\text{for a.e. } s\in
(0,|\Omega|/2).
\end{array}
\right. %
\end{equation}
Now, since the solution $\xi$ to \eqref{extensionsymm} is radially decreasing w.r. to $x$, all the inequalities used above become equalities, hence the function
\[
V(s,z)=\int_{0}^{s}\xi^{\ast}(\tau,z)d\tau.
\]
solves the problem
\begin{equation*}
\left\{
\begin{array}
[c]{lll}%
-z^{\nu}\dfrac{\partial^{2}V}{\partial z^{2}}-Q^{-2}s^{2-2/N}\dfrac{\partial^{2}V}{\partial s^{2}}=0, \quad for\text{
}a.e.\,(s,z)\in (0,|\Omega|/2)\times(0,\infty)\\[15pt]
V(0,z)=\dfrac{\partial V}{\partial s}(|\Omega|/2,z)=0 \quad \forall z>0\\[15pt]
\dfrac{\partial V}{\partial{z}}(s,0)= -\beta_{\sigma}{\displaystyle\int_{0}^{s}(f_{1}^{\ast}(\tau)+f_{2}^{\ast}(\tau))d\tau},\quad\text{for a.e. } s\in
(0,|\Omega|/2),
\end{array}
\right. %
\end{equation*}
thus the function
\[
\chi=U-V
\]
verifies
\begin{equation*}
\left\{
\begin{array}
[c]{lll}%
-z^{\nu}\dfrac{\partial^{2}\chi}{\partial z^{2}}-Q^{-2}s^{2-2/N}\dfrac{\partial^{2}\chi}{\partial s^{2}}\leq0, \quad for\text{
}a.e.\,(s,z)\in (0,|\Omega|/2)\times(0,\infty)\\[15pt]
\chi(0,z)=\dfrac{\partial \chi}{\partial s}(|\Omega|/2,z)=0 \quad \forall z>0\\[15pt]
\dfrac{\partial \chi}{\partial{y}}(s,0)\geq0,\quad\text{for a.e. } s\in
(0,|\Omega|/2).
\end{array}
\right. %
\end{equation*}
Then a classical maximum principle argument allows to conclude that
\[
\chi\leq0\quad\forall(s,z)\in[0,|\Omega|/2]\times[0,\infty)
\]
that is
\begin{equation*}
\int_{0}^{s}(w_{1}^{\ast}(\tau,z)+w_{2}^{\ast}(\tau,z))d\tau\leq
\int_{0}^{s}\xi^{\ast}(\tau,z)d\tau\quad\forall(s,z)\in[0,|\Omega|/2]\times[0,\infty).\label{mainestimate}
\end{equation*}
\end{proof}
\begin{remark}
\emph{
If the function $\lambda(z)$ in \eqref{functiont(y)} is constant, then Theorem \ref{comparisontheorem} can be actually strengthened. Indeed, in such a case the
second derivative of $\lambda(z)$ disappears in estimates \eqref{firstestim}, \eqref{secondestim}, so we have that the concentration function
\[
U_{i}(s,z)=\int_{0}^{s}w^{\ast}_{i}(\tau,y)d\tau\quad\forall i=1,2
\]
satisfies the system
\begin{equation*}
\left\{
\begin{array}
[c]{lll}%
-z^{\nu}\dfrac{\partial^{2}U_{i}}{\partial z^{2}}-Q^{-2}s^{2-2/N}\dfrac{\partial^{2}U_{i}}{\partial s^{2}}\leq0, \quad for\text{
}a.e.\,(s,z)\in (0,|\Omega|/2)\times(0,\infty)\\[15pt]
U_{i}(0,z)=\dfrac{\partial U_i}{\partial s}(|\Omega|/2,z)=0 \quad \forall z>0\\[15pt]
\dfrac{\partial U_{i}}{\partial{z}}(s,0)\geq -\beta_{\sigma}{\displaystyle\int_{0}^{s}f_{i}^{\ast}(\sigma)d\sigma},\quad\text{for a.e. } s\in
(0,|\Omega|/2).
\end{array}
\right. %
\end{equation*}
Then if for $i=1,2$ we call $v_{i}$ the solution to the problem
\begin{equation*} \label{symmetriz}
\left\{
\begin{array}
[c]{lll}%
\left(  -\gamma\Delta\right)^{\sigma}v_{i}=f_{i}^{\#}(x)   &  & in\text{ }%
B,\\ [10pt]
v_{i}=0  &  & on\text{ }\partial B
\end{array}
\right. %
\end{equation*}
and $\xi_{i}$ the Dirichlet extension of $v_{i}$, we have that
\[
V_{i}=\int_{0}^{s}\xi_{i}^{\ast}(\tau,z)d\tau
\]
solves
\begin{equation*}
\left\{
\begin{array}
[c]{lll}%
-z^{\nu}\dfrac{\partial^{2}V_{i}}{\partial z^{2}}-Q^{-2}s^{2-2/N}\dfrac{\partial^{2}V_{i}}{\partial s^{2}}=0, \quad for\text{
}a.e.\,(s,z)\in (0,|\Omega|/2)\times(0,\infty)\\[15pt]
V_{i}(0,z)=\dfrac{\partial V_i}{\partial s}(|\Omega|/2,z)=0 \quad \forall z>0\\[15pt]
\dfrac{\partial V_{i}}{\partial{y}}(s,0)= -\beta_{\sigma}{\displaystyle\int_{0}^{s}f_{i}^{\ast}(\tau)d\tau},\quad\text{for a.e. } s\in
(0,|\Omega|/2),
\end{array}
\right. %
\end{equation*}
therefore the function
\[
\chi_{i}=U_{i}-V_{i}
\]
is a solution to
\begin{equation*}
\left\{
\begin{array}
[c]{lll}%
-z^{\nu}\dfrac{\partial^{2}\chi_{i}}{\partial z^{2}}-Q^{-2}s^{2-2/N}\dfrac{\partial^{2}\chi_{i}}{\partial s^{2}}\leq0, \quad for\text{
}a.e.\,(s,z)\in (0,|\Omega|/2)\times(0,\infty)\\[15pt]
\chi_{i}(0,z)=\dfrac{\partial \chi_{i}}{\partial s}(|\Omega|/2,z)=0 \quad \forall z>0\\[15pt]
\dfrac{\partial \chi_{i}}{\partial{y}}(s,0)\geq0,\quad\text{for a.e. } s\in
(0,|\Omega|/2).
\end{array}
\right. %
\end{equation*}
By the maximum principle again we have
\[
\chi_{i}\leq0 \quad\forall(s,z)\in[0,|\Omega|/2]\times[0,\infty)
\]
that is
\[
\int_{0}^{s}w_{i}^{\ast}(\tau,z)d\tau\leq \int_{0}^{s}\xi_{i}^{\ast}(\tau,z)d\tau \quad\forall(s,z)\in[0,|\Omega|/2]\times[0,\infty).
\]
This can be interpreted as the mass concentration comparison version of the Maderna-Salsa result \cite{MadSalsa}, for the nonlocal operator $(-\Delta)^{\sigma}$.}
\end{remark}
Finally, a natural extension of Theorem \ref{comparisontheorem} is the following
\begin{corollary}\label{Corollary}
Assume that $g$ is a radially decreasing function on the ball $B$, such that
\[
f_{1}^{\#}+f_{2}^{\#}\prec g\,\,in\,\,B,
\]
and let $v$ be the solution to problem \eqref{symmetrizz} with $f_{1}^{\#}+f_{2}^{\#}$ replaced by $g$. If $\xi$ is the harmonic Neumann extension
of $v$ (namely the solution to \eqref{extensionsymm} with $f_{1}^{\#}+f_{2}^{\#}$ replaced by $g$), then the concentration inequality
\eqref{concentrestim} still holds.
\end{corollary}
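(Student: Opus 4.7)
The plan is to reuse the machinery built in the proof of Theorem \ref{comparisontheorem}, changing only the treatment of the boundary term at $z=0$ so as to exploit the concentration hypothesis $f_{1}^{\#}+f_{2}^{\#}\prec g$. The idea is that the entire derivation leading to the differential system \eqref{principsystem} for the concentration function
\[
U(s,z)=\int_{0}^{s}(w_{1}^{\ast}(\tau,z)+w_{2}^{\ast}(\tau,z))\,d\tau
\]
depends on the source $f$ only through the third relation, namely the Neumann-type condition $\partial_{z}U(s,0)\geq-\beta_{\sigma}\int_{0}^{s}(f_{1}^{\ast}+f_{2}^{\ast})\,d\tau$. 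The PDE inequality and the Dirichlet/Neumann conditions in $s$ are intrinsic to $U$ and do not change.

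First I would invoke Theorem \ref{comparisontheorem} in the form already established: the solution $w$ of the extension problem \eqref{extens} for the original Neumann problem produces a concentration function $U(s,z)$ satisfying \eqref{principsystem}. Next, I would analyze the radial Dirichlet problem obtained by replacing $f_{1}^{\#}+f_{2}^{\#}$ with $g$. Since $g$ is already rearranged, the harmonic extension $\xi$ is spherically decreasing in $x$ at every level $z$, so the coarea/isoperimetric inequalities used in the proof of Theorem \ref{comparisontheorem} become equalities, and moreover the analog of $\lambda(z)$ is identically zero. This means $V_{g}(s,z):=\int_{0}^{s}\xi^{\ast}(\tau,z)\,d\tau$ satisfies the \emph{equality} version of \eqref{principsystem} with boundary datum $\partial_{z}V_{g}(s,0)=-\beta_{\sigma}\int_{0}^{s}g^{\ast}(\tau)\,d\tau$.

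I would then form $\chi:=U-V_{g}$ and examine the system it satisfies. The PDE inequality and the conditions $\chi(0,z)=\partial_{s}\chi(|\Omega|/2,z)=0$ pass through immediately by linearity. The essential point is the initial condition:
\[
\frac{\partial\chi}{\partial z}(s,0)\geq-\beta_{\sigma}\int_{0}^{s}\big(f_{1}^{\ast}+f_{2}^{\ast}-g^{\ast}\big)d\tau=\beta_{\sigma}\int_{0}^{s}\big(g^{\ast}-f_{1}^{\ast}-f_{2}^{\ast}\big)d\tau\geq 0,
\]
where the final inequality is exactly the translation of the hypothesis $f_{1}^{\#}+f_{2}^{\#}\prec g$ into inequalities between the running integrals of the one-dimensional rearrangements. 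The same maximum-principle argument used at the end of the proof of Theorem \ref{comparisontheorem} then yields $\chi\leq 0$ on $[0,|\Omega|/2]\times[0,\infty)$, which is precisely the desired concentration inequality.

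The main obstacle here is not analytic but bookkeeping: one must be sure that the derivation leading to \eqref{principsystem} did not anywhere use the specific form $f_{1}^{\#}+f_{2}^{\#}$ of the radial source beyond the single Hardy--Littlewood step in \eqref{mossinorak}. A quick inspection confirms that this is the case, since the PDE inequality in $U$ comes exclusively from the relative isoperimetric inequality and the Ferone--Mercaldo formula \eqref{Ferone-Mercaldo}, both of which are insensitive to the data. Thus the only change needed to upgrade Theorem \ref{comparisontheorem} to Corollary \ref{Corollary} is the comparison of integrals of rearrangements at $z=0$, which is immediate from the definition of $\prec$.
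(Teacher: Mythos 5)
Your argument is correct and matches the approach the paper intends: Corollary \ref{Corollary} is stated without an explicit proof, but the analogous Corollary \ref{corollimplicitime} (for $c>0$) is proved by exactly this route, namely observing that the symmetrized source enters the derivation of the system \eqref{principsystem} only through the Hardy--Littlewood step at $z=0$, and that replacing $f_{1}^{\#}+f_{2}^{\#}$ by a more concentrated radial $g$ only reinforces the sign of $\partial_{z}\chi(s,0)$. The one point worth tightening is your remark that ``the analog of $\lambda(z)$ is identically zero'': what is actually used is that, for the Dirichlet problem on $B$ with $|B|=|\Omega|/2$, the level sets of $\xi(\cdot,z)$ already have measure at most $|B|$ so no median subtraction is needed, and the relative isoperimetric inequality reduces to an identity by the choice of $\gamma$; this is what makes $V_{g}$ solve the equality version of the system.
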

\subsection{Consequences}
The following remarkable properties can be easily deduced from Theorem \ref{comparisontheorem}.\\[2pt]
\noindent {\sl 1. Oscillation estimate.} From the mass concentration comparison \eqref{concentrestim} we have
\begin{equation}
\int_{0}^{s}(u_{1}^{\ast}(\tau)+u_{2}^{\ast}(\tau))d\tau\leq
\int_{0}^{s}v^{\ast}(\tau)d\tau\quad\forall s\in[0,|\Omega|/2],\label{mainestimate2}
\end{equation}
where $u_{1}=(u-\mathsf{m}(u))^{+}$ and $u_{1}=(u-\mathsf{m}(u))^{-}$.
Then inequality \eqref{mainestimate2} can be rewritten as
\[
u_{1}^{\#}+u_{2}^{\#}\prec v,
\]
which implies, in particular, the meaningful oscillation estimate
\[
\|v\|_{L^{\infty}(B)}\geq\|u_{1}^{\#}+u_{2}^{\#}\|_{L^{\infty}(B)}=\sup_{\Omega}(u-\mathsf{m}(u))-\inf_{\Omega}(u-\mathsf{m}(u))=\sup_{\Omega} u-\inf_{\Omega} u.
\]
\noindent {\sl 2. $L^{p}$-estimates.} From \eqref{mainestimate2} we also have
\[
\int_{0}^{s}(u-\mathsf{m}(u))^{\ast}d\tau\leq\int_{0}^{s}(u_{1}^{\ast}(\tau)+u_{2}^{\ast}(\tau))d\tau\leq\int_{0}^{s}v^{\ast}(\tau)d\tau
\]
hence in particular
\[
\|u-\mathsf{m}(u)\|_{L^{p}(\Omega)}\leq\|v\|_{L^{p}(B)},
\]
for all $p\in [1,\infty)$. Then, making use of the fractional Dirichlet regularity estimates derived in \cite{dBVol}, we can obtain the whole sharp
$L^{p,r}$-scale
of regularity estimates for Neumann problems of the type \eqref{eq.0.1}, generalizing some of the assertions in \cite[Theorem 3.5]{StingaVolz} (see also \cite{Caffarelli-Stinga} for an important treatment of $C^{\alpha}$ regularity estimates up to the boundary). Therefore we can state the following result (for basic properties of Lorentz and Orlicz spaces see \emph{e.g} \cite{Bennett}):
\begin{theorem}\label{Thm:regularity}
Let $u\in H^{\sigma}(\Omega)$, $\sigma\in(0,1)$ be a solution to \eqref{eq.0.1} and $$f\in L^{p,r}(\Omega)$$ with
\[
p\geq\frac{2N}{N+2\sigma}, \quad r\geq1,
\]
where $L^{p,r}(\Omega)$ is the Lorentz space on $\Omega$ of exponents $p,\,r$. Suppose that $f$
verifies the compatibility condition \eqref{compatib}.
Then, for some positive constants $C$, the following assertions hold:
\begin{enumerate}
 \item if $p<N/2\sigma$ then $u\in L^{q,r}(\Omega)$ with
 \[
 q=\frac{Np}{N-2\sigma p}
 \]
 and
 \[
 \|u-\mathsf{m}(u)\|_{L^{q,r}(\Omega)}\leq C\|f\|_{L^{p,r}(\Omega)};
 \]
 \item if $p=N/2\sigma$ and $r=1$, then $u\in L^{\infty}(\Omega)$
and
 \[
 \|u-\mathsf{m}(u)\|_{L^{\infty}(\Omega)}\leq C\|f\|_{L^{N/2\sigma,1}(\Omega)};
 \]
\item if $p=N/2\sigma$ and $r\in(1,\infty]$, then $u\in L_{\Phi_{r}}(\Omega)$ and
\[
\|u-\mathsf{m}(u)\|_{L_{\Phi_{r}}(\Omega)}\leq C \|f\|_{L^{N/2\sigma,r}(\Omega)},
\]
where $L_{\Phi_{r}%
}(\Omega)$ is the Orlicz space generated by the $N$-function
\[
\Phi_{r}(t)=\exp(|t|^{r^{\prime}})-1
\]
being $r^{\prime}$ the conjugate exponent of $r$.
\end{enumerate}
\end{theorem}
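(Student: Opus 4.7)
The plan is to bootstrap the $L^{p}$ estimate $\|u-\mathsf{m}(u)\|_{L^{p}(\Omega)}\leq \|v\|_{L^{p}(B)}$ obtained in the preceding \emph{Consequences} subsection to the full Lorentz/Orlicz scale, by combining the mass concentration comparison $u_1^{\#}+u_2^{\#}\prec v$ from Theorem \ref{comparisontheorem} (evaluated at $y=0$) with the sharp \emph{Dirichlet} fractional regularity estimates proved in \cite{dBVol}. The argument would be organized as the chain
\[
\|u-\mathsf{m}(u)\|_{Y(\Omega)}\leq C\|u_1^{\#}+u_2^{\#}\|_{Y(B)}\leq C\|v\|_{Y(B)}\leq C\|f_1^{\#}+f_2^{\#}\|_{L^{p,r}(B)}\leq C\|f\|_{L^{p,r}(\Omega)},
\]
where $Y=L^{q,r}$ in case $(1)$, $Y=L^{\infty}$ in case $(2)$, and $Y=L_{\Phi_r}$ in case $(3)$.

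For the first link I would exploit that $u_1=(u-\mathsf{m}(u))^{+}$ and $u_2=(u-\mathsf{m}(u))^{-}$ have disjoint supports, so $|u-\mathsf{m}(u)|=u_1+u_2$. Since Lorentz and Orlicz norms are rearrangement-invariant, $\|u_i\|_{Y(\Omega)}=\|u_i^{\#}\|_{Y(B)}\leq\|u_1^{\#}+u_2^{\#}\|_{Y(B)}$, and a quasi-triangle inequality in $Y$ yields the estimate with a universal constant. The second link rests on Lemma \ref{lemma1}: since $u_1^{\#}+u_2^{\#}$ and $v$ are both rearranged on $B$ with $u_1^{\#}+u_2^{\#}\prec v$, we get $\int_{B}\Phi(u_1^{\#}+u_2^{\#})\leq \int_{B}\Phi(v)$ for every convex nondecreasing $\Phi$ with $\Phi(0)=0$, hence every rearrangement-invariant norm (in particular $L^{q,r}$, $L^{\infty}$, and $L_{\Phi_r}$) respects the concentration ordering. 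The third link is the core regularity input: since $v$ solves the radial Dirichlet problem \eqref{symmetrizz} with datum $f_1^{\#}+f_2^{\#}$, applying the Dirichlet fractional Laplacian Lorentz/Orlicz estimates of \cite{dBVol} produces precisely the three target bounds, with the Sobolev exponent $q=Np/(N-2\sigma p)$ in case $(1)$, the endpoint $L^{\infty}$ embedding in case $(2)$, and the $N$-function $\Phi_r(t)=\exp(|t|^{r'})-1$ in case $(3)$. The final link is once more rearrangement invariance: $\|f_i^{\#}\|_{L^{p,r}(B)}=\|f_i\|_{L^{p,r}(\Omega)}\leq \|f\|_{L^{p,r}(\Omega)}$, combined with the Lorentz quasi-triangle inequality.

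The main obstacle is the first link, where the median decomposition forces the estimate to live on the half-volume ball $B$ while $u-\mathsf{m}(u)$ lives on $\Omega$: for $Y=L^{p}$ the disjoint-support identity $(u_1+u_2)^{p}=u_1^{p}+u_2^{p}$ closes the estimate directly, but for $Y=L^{p,r}$ and especially $Y=L_{\Phi_r}$ one must handle the non-additivity of these quasi-norms with care and track the absolute constant through the quasi-triangle inequality. The delicate case is the endpoint $p=N/2\sigma$ with $r>1$, where the Orlicz structure carried by $\Phi_r$ is most sensitive to the concentration ordering; once the monotonicity $u_1^{\#}+u_2^{\#}\prec v\Longrightarrow \|u_1^{\#}+u_2^{\#}\|_{L_{\Phi_r}}\leq \|v\|_{L_{\Phi_r}}$ is secured via Lemma \ref{lemma1} with $\Phi=\Phi_r$, the three cases become parallel and reduce to direct application of the Dirichlet estimates of \cite{dBVol}.
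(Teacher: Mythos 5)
Your proof strategy is essentially the one the paper has in mind: the theorem is stated without an explicit proof, but the preceding \emph{Consequences} discussion already sets up the chain
\[
\int_{0}^{s}(u-\mathsf{m}(u))^{\ast}\,d\tau\;\leq\;\int_{0}^{s}\bigl(u_{1}^{\ast}(\tau)+u_{2}^{\ast}(\tau)\bigr)\,d\tau\;\leq\;\int_{0}^{s}v^{\ast}(\tau)\,d\tau,
\]
and the closing sentence explicitly refers to the Dirichlet Lorentz/Orlicz estimates of \cite{dBVol}, which is exactly your third link. The only place where you diverge from the paper's intended route is the first link: you pass through the quasi-triangle inequality for $Y$, whereas the paper uses the subadditivity of the concentration functional, i.e.\ $(u_1+u_2)^{\#}\prec u_1^{\#}+u_2^{\#}$, so that the composite relation $(u-\mathsf{m}(u))^{\#}\prec v$ follows with no constants and then a single application of the Hardy--Littlewood--P\'olya monotonicity of rearrangement-invariant norms (Lemma \ref{lemma1} for the Orlicz case, the $f^{**}$-monotonicity for the Lorentz case, and the limit $s\to 0^{+}$ of the concentration functional for $L^{\infty}$) gives the first two links at once. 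Your route reaches the same conclusion but introduces a spurious multiplicative constant through the quasi-triangle inequality and requires you to interpret $\|u_{i}\|_{Y(\Omega)}=\|u_i^{\#}\|_{Y(B)}$ with care (for the $f^{**}$ version of the Lorentz quasi-norm this is an equivalence, not an equality, because the averaged rearrangement $u_i^{**}$ does not vanish past $|\Omega|/2$). Both routes are correct; the concentration-chain route is sharper and is the one the paper's own text suggests.

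One small point worth making precise if you write this up: when you invoke Lemma \ref{lemma1} to assert that Lorentz norms respect the concentration ordering, you should pass to the $f^{**}$ (maximal function) form of the Lorentz quasi-norm — the plain $f^{\ast}$ form is not monotone under $\prec$ — and note that for $q>1$ (which holds here since $p\geq 2N/(N+2\sigma)>1$ implies $q>1$) the two forms are equivalent, so the constant is harmless.
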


\section{Extensions to operators with constant zero order coefficient}\label{zeroordersec}

If $c>0$ is a constant, we wish to generalize Theorem \ref{comparisontheorem} for fractional linear Neumann problems of the
type \eqref{eq.0}.
Of course, in this setting we will not require the compatibility condition \eqref{compatib}. According to what explained in Section \ref{Sec3}, the unique weak solution $u\in H^{\sigma}(\Omega)$ is the trace over $\Omega$ of the unique weak solution $w\in \mathsf{H}^{\sigma,c}(\mathcal{C}_{\Omega})$ to the extension problem \eqref{extenszero}.
In this case, we compare problem \eqref{eq.0} with the radial Dirichlet problem
\begin{equation} \label{symmetriz2}
\left\{
\begin{array}
[c]{lll}%
\left(  -\gamma\Delta\right)^{\sigma}v+cv=f_{1}^{\#}(x)+f_{2}^{\#}\left(  x\right)   &  & in\text{ }%
B,\\ [10pt]
v=0  &  & on\text{ }\partial B,
\end{array}
\right. %
\end{equation}
being
\[
f_{1}=(f-\mathsf{m}(f))^{+},\,f_{2}=(f-\mathsf{m}(f))^{-}.
\]
The extension problem associated to \eqref{symmetriz2} is given by
\begin{equation}
\left\{
\begin{array}
[c]{lll}%
\dfrac{\partial^{2}\xi}{\partial y^{2}}+\dfrac{1-2\sigma}{y}\dfrac{\partial\xi}{\partial y}+\gamma\Delta_{x}\xi=0 &  & in\text{
}\mathcal{C}_{B},\\[15pt]
\xi=0 &  & on\text{ }\partial_{L}\mathcal{C}_{B},\\[15pt]
\displaystyle{-\frac{1}{\kappa_{\sigma}}\lim_{y\rightarrow0^{+}}y^{1-2\sigma}\dfrac{\partial\xi}{\partial y}(x,y)}+c\,\xi(x,0) =f_{1}^{\#}(x)+f_{2}^{\#}(x)   &
&
in\text{ }B.
\end{array}
\right.\label{extensionsymme2}
\end{equation}
In this respect, we will prove the following result:
\begin{theorem}\label{comparisonc}
Let $c>0$,
assume that $f\in H^{\sigma}(\Omega)^{\prime}$ and let $u,\,w$ be the solutions to \eqref{eq.0} and the extension problem \eqref{extenszero} respectively. Let $v,\,\xi$ be the solutions to the symmetrized problem \eqref{symmetriz2} and the extension problem \eqref{extensionsymme2} respectively. Then inequality
\eqref{concentrestim} still holds.
\end{theorem}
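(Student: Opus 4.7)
The plan is to reduce Theorem \ref{comparisonc} to the scheme already developed for Theorem \ref{comparisontheorem} via a constant translation that normalizes the median of the source. Since the equation $(-\Delta)^{\sigma}u+cu=f$ with $c>0$ is uniquely solvable we cannot freely shift $u$, but setting $\tilde u:=u-K$ with $K:=\mathsf{m}(f)/c$ gives a companion $\tilde u$ satisfying $(-\Delta)^{\sigma}\tilde u+c\tilde u=f-\mathsf{m}(f)=:\tilde f$. By construction $\tilde f$ splits as $\tilde f=f_{1}-f_{2}$ with $|\text{sprt}\, f_{1}|,|\text{sprt}\, f_{2}|\leq|\Omega|/2$, which is exactly the structural property that unlocked the truncation argument in the proof of Theorem \ref{comparisontheorem}. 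As in that proof, one first assumes $f$ smooth so that $\mathsf{m}(f)$ makes sense, and recovers the general $f\in H^{\sigma}(\Omega)'$ case by approximation at the end.

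Let $\tilde w:=w-K$, which is the Neumann harmonic extension of $\tilde u$. Mimicking the construction preceding Theorem \ref{comparisontheorem}, define $\lambda(z):=\mathsf{m}(\tilde w(\cdot,z))$, $w_{1}:=[\tilde w-\lambda]^{+}$, $w_{2}:=[\tilde w-\lambda]^{-}$, so that $|\text{sprt}\, w_{i}(\cdot,z)|\leq|\Omega|/2$ for every $z\geq0$ and $i=1,2$. After the change of variable $z=(y/2\sigma)^{2\sigma}$, the interior PDE solved by $\tilde w$ is identical to that in \eqref{eq.0.1bis}, because the coefficient $c$ enters only through the boundary condition at $z=0$. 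Consequently the entire interior chain of the proof of Theorem \ref{comparisontheorem} --- plugging the truncations $\varphi_{h,i}^{z}$ into \eqref{weakformy}, using the relative isoperimetric inequality \eqref{relisop}, the coarea formula and the Ferone--Mercaldo second-order formula \eqref{Ferone-Mercaldo}, with the $\lambda'(z)$-terms cancelling because $|\{w_{1}>w_{1}^{\ast}(s,z)\}|=|\{w_{2}>w_{2}^{\ast}(s,z)\}|=s$ --- goes through \emph{verbatim}, yielding the differential inequality \eqref{thirdestimate} for $U(s,z):=\int_{0}^{s}(w_{1}^{\ast}(\tau,z)+w_{2}^{\ast}(\tau,z))\, d\tau$.

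The only genuinely new piece is the boundary contribution at $z=0$, where the transformed equation reads $-\partial_{z}\tilde w(x,0)=\beta_{\sigma}\bigl[\tilde f(x)-c\tilde u(x)\bigr]$. Adapting \eqref{mossinorak} to this datum via the first-order formula \eqref{Rakotoson} and letting $z\to 0^{+}$ produces
\[
-\frac{\partial U}{\partial z}(s,0)=\beta_{\sigma}\!\left[\int_{A_{1}}\tilde f\, dx-\int_{A_{2}}\tilde f\, dx\right]-c\beta_{\sigma}\!\left[\int_{A_{1}}\tilde u\, dx-\int_{A_{2}}\tilde u\, dx\right],
\]
with $A_{i}:=\{x:w_{i}(x,0)>w_{i}^{\ast}(s,0)\}$. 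On $A_{1}$ one has $\tilde u\geq\mathsf{m}(\tilde u)$, on $A_{2}$ one has $\tilde u\leq\mathsf{m}(\tilde u)$, and $|A_{1}|=|A_{2}|=s$, so the bracket multiplying $-c\beta_{\sigma}$ is nonnegative and the entire $c$-contribution is $\leq 0$ and may be discarded. The residual $\tilde f$-term is dominated by Hardy--Littlewood \eqref{HardyLit} exactly as in \eqref{mossinorak} (using $|A_{i}|=s\leq|\Omega|/2$ and $f_{i}=\tilde f^{\pm}$), giving
\[
-\frac{\partial U}{\partial z}(s,0)\leq\beta_{\sigma}\!\int_{0}^{s}\bigl(f_{1}^{\ast}(\tau)+f_{2}^{\ast}(\tau)\bigr)\, d\tau.
\]

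Since the solution $\xi$ of the radial problem \eqref{extensionsymme2} is Steiner-symmetric in $x$, the symmetrization inequalities above collapse to equalities, so $V(s,z):=\int_{0}^{s}\xi^{\ast}(\tau,z)\, d\tau$ solves the equality version of the same degenerate-elliptic system satisfied by $U$. The maximum principle for the operator $-z^{\nu}\partial_{z}^{2}-Q^{-2}s^{2-2/N}\partial_{s}^{2}$ employed at the end of the proof of Theorem \ref{comparisontheorem} then forces $U\leq V$ on $[0,|\Omega|/2]\times[0,\infty)$, which is precisely \eqref{concentrestim}. The step I expect to be the delicate one is securing the correct sign of the $c$-contribution at the boundary: the shift $K=\mathsf{m}(f)/c$, matched to the definition of $f_{1},f_{2}$ in the statement, is exactly what arranges $|A_{1}|=|A_{2}|=s$ together with the median control $\tilde u\gtrless\mathsf{m}(\tilde u)$ on $A_{1}\cup A_{2}$ that makes the cancellation work.
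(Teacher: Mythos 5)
Your interior argument is fine, and the translation $\tilde u = u - \mathsf{m}(f)/c$ is a clean way to make the source term $f-\mathsf{m}(f)$ appear (though it is purely cosmetic: since $w_{1},w_{2}$ are defined by subtracting the median of $w(\cdot,z)$, shifting $w$ by a constant does not change $w_{1},w_{2}$, hence $U$ is unchanged; the paper gets the same effect by inserting and cancelling $\mathsf{m}(f)$ directly, using $|A_1|=|A_2|=s$). But there is a genuine gap in how you treat the $c$-contribution at the boundary, and it is fatal to the maximum-principle step.

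You observe, correctly, that the bracket $\int_{A_{1}}\tilde u\,dx-\int_{A_{2}}\tilde u\,dx$ is nonnegative, and you throw away the corresponding term, arriving at the Neumann-type inequality
$-\partial_{z}U(s,0)\leq \beta_{\sigma}\int_{0}^{s}(f_{1}^{\ast}+f_{2}^{\ast})\,d\tau$.
However, the comparison function $V(s,z)=\int_{0}^{s}\xi^{\ast}(\tau,z)\,d\tau$ built from the radial extension $\xi$ of \eqref{extensionsymme2} does \emph{not} satisfy the equality version of this Neumann condition: the boundary datum in \eqref{extensionsymme2} contains $c\,\xi(x,0)$, so after the change of variables $V$ satisfies the \emph{Robin} equality
$-\partial_{z}V(s,0)=-\beta_{\sigma}c\,V(s,0)+\beta_{\sigma}\int_{0}^{s}(f_{1}^{\ast}+f_{2}^{\ast})\,d\tau$.
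Subtracting, $\chi=U-V$ only inherits $\partial_{z}\chi(s,0)\geq -\beta_{\sigma}c\,V(s,0)$. Since $V\geq0$ the right-hand side is $\leq 0$, so this gives no sign information on $\partial_z\chi$ at the boundary, and Hopf's lemma cannot be invoked: a positive interior-type maximum of $\chi$ on $\{z=0\}$ is not excluded.

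The repair, which is exactly what the paper does, is not to discard but to \emph{identify} the bracket. By the same level-set decomposition you already used to cancel the $\lambda(z)$ and $\mathsf{m}(f)$ terms (namely $\tilde u = w_{1}+\text{const}$ on $A_{1}$, $\tilde u=-w_{2}+\text{const}$ on $A_{2}$, with $|A_{1}|=|A_{2}|=s$), one has the exact identity
$\int_{A_{1}}\tilde u\,dx-\int_{A_{2}}\tilde u\,dx=\int_{A_{1}}w_{1}\,dx+\int_{A_{2}}w_{2}\,dx=U(s,0)$.
Keeping this term then yields the Robin inequality
$-\partial_{z}U(s,0)\leq -\beta_{\sigma}c\,U(s,0)+\beta_{\sigma}\int_{0}^{s}(f_{1}^{\ast}+f_{2}^{\ast})\,d\tau$,
which matches the Robin equality for $V$ and gives $\partial_{z}\chi(s,0)\geq \beta_{\sigma}c\,\chi(s,0)$. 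That is the boundary condition Hopf's boundary maximum principle actually needs to conclude $\chi\leq0$.
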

\begin{proof}
It is crystal clear that the first estimate in \eqref{principsystem} with the first two boundary conditions still hold. As for the Neumann condition satisfied by $U$, we first observe that since (see \cite{Stinga-Torrea} again)
\[
\lim_{z\rightarrow0^{+}}\left[-\frac{\partial w}{\partial z}(\cdot,z)+\beta_{\sigma}cw(\cdot,z)\right]=\beta_{\sigma}f\,\quad \,in\,\,L^{2}(\Omega)
\]
we have
\[
-\frac{\partial w}{\partial z}(\cdot,z)+\beta_{\sigma}cw(\cdot,z)=\beta_{\sigma}f+\mathcal{R}(\cdot,z)\,\quad \,in\,\,L^{2}(\Omega)\,, z\rightarrow0^{+}
\]
for a certain remaining term $\mathcal{R}=\mathcal{R}(x,z)$ tending to zero in $L^{2}(\Omega)$, uniformly in $z$.
Then using the same notations of the proof of Theorem \ref{comparisontheorem} and arguing as in inequality \eqref{mossinorak} we have, for small $z>0$,
\begin{align}
-\frac{\partial U}{\partial z}(s,z)&=-\int_{\left\{x:\,w_{1}(x,z)>w_{1}^{\ast}(s,z)\right\}}\frac{\partial w}{\partial
z}(x,z)\,dx+\int_{\left\{x:\,w_{2}(x,z)>w_{2}^{\ast}(s,z)\right\}}\frac{\partial
w}{\partial z}(x,z)dx \nonumber\\
&=-\beta_{\sigma}c\int_{\left\{x:\,w_{1}(x,z)>w_{1}^{\ast}(s,z)\right\}}w(x,z)dx+\beta_{\sigma}c\int_{\left\{x:\,w_{2}(x,z)>w_{2}^{\ast}(s,z)\right\}}w(x,z)dx \nonumber\\&+\beta_{\sigma}\int_{\left\{x:\,w_{1}(x,z)>w_{1}^{\ast}(s,z)\right\}}f(x)dx
-\beta_{\sigma}\int_{\left\{x:\,w_{2}(x,z)>w_{2}^{\ast}(s,z)\right\}}f(x)dx\nonumber\\
&+\int_{\left\{x:\,w_{1}(x,z)>w_{1}^{\ast}(s,z)\right\}}\mathcal{R}(x,z)dx-\int_{\left\{x:\,w_{2}(x,z)>w_{2}^{\ast}(s,z)\right\}}\mathcal{R}(x,z)dx\nonumber\\
&=-\beta_{\sigma}c\int_{\left\{x:\,w_{1}(x,z)>w_{1}^{\ast}(s,z)\right\}}w_{1}(x,z)dx-\beta_{\sigma}c\int_{\left\{x:\,w_{2}(x,z)>w_{2}^{\ast}(s,z)\right\}}w_{2}(x,z)dx\nonumber
\\&+\beta_{\sigma}\int_{\left\{x:\,w_{1}(x,z)>w_{1}^{\ast}(s,z)\right\}}(f(x)-\mathsf{m}(f))dx
-\beta_{\sigma}\int_{\left\{x:\,w_{2}(x,z)>w_{2}^{\ast}(s,z)\right\}}(f(x)-\mathsf{m}(f))dx\nonumber\\
&+\int_{\left\{x:\,w_{1}(x,z)>w_{1}^{\ast}(s,z)\right\}}\mathcal{R}(x,z)dx-\int_{\left\{x:\,w_{2}(x,z)>w_{2}^{\ast}(s,z)\right\}}\mathcal{R}(x,z)dx\nonumber\\
&=-\beta_{\sigma}c\,U(s,z)+\beta_{\sigma}\int_{\left\{x:\,w_{1}(x,z)>w_{1}^{\ast}(s,z)\right\}}(f(x)-\mathsf{m}(f))dx
-\beta_{\sigma}\int_{\left\{x:\,w_{2}(x,z)>w_{2}^{\ast}(s,z)\right\}}(f(x)-\mathsf{m}(f))dx\label{zeroorder}\\
&+\int_{\left\{x:\,w_{1}(x,z)>w_{1}^{\ast}(s,z)\right\}}\mathcal{R}(x,z)dx-\int_{\left\{x:\,w_{2}(x,z)>w_{2}^{\ast}(s,z)\right\}}\mathcal{R}(x,z)dx\nonumber\
\end{align}
therefore using the Hardy-Littlewood inequality \eqref{HardyLit} and passing to the limit as $z\rightarrow0^{+}$ we find
\begin{equation}
-\frac{\partial U}{\partial z}(s,0)\leq-\beta_{\sigma}c\,U(s,0)+\beta_{\sigma}\left(\int_{0}^{s}f_{1}^{\ast}(\tau)d\tau+\int_{0}^{s}f_{2}^{\ast}(\tau)d\tau\right).\label{eq.3}
\end{equation}
Taking into account the symmetry of the solution $\xi$ to \label{extensionsymm2} with respect to $x$, the first inequality in \eqref{principsystem} and inequality \eqref{eq.3} become equalities, up to
replacing $U$ by the concentration $V$ of $\xi$, that is
\[
V(s,z)=\int_{0}^{s}\xi^{\ast}(\tau,z)d\tau.
\]
Then we finally obtain
\begin{equation*} \label{princsystem}
\left\{
\begin{array}
[c]{lll}%
-z^{\nu}\dfrac{\partial^{2}\chi}{\partial z^{2}}-Q^{-2}s^{2-2/N}\dfrac{\partial^{2}\chi}{\partial s^{2}}\leq0, \quad for\text{
}a.e.\,(s,z)\in (0,|\Omega|/2)\times(0,\infty)\\[15pt]
\chi(0,z)=\dfrac{\partial \chi}{\partial s}(|\Omega|/2,z)=0 \quad \forall z>0\\[15pt]
\dfrac{\partial \chi}{\partial{z}}(s,0)\geq \beta_{\sigma}c\chi(s,0),\quad\text{for a.e. } s\in
(0,|\Omega|/2),
\end{array}
\right. %
\end{equation*}
where have set as usual
\[
\chi=U-V.
\]
By Hopf's boundary maximum principle we easily obtain that $\chi\leq0$, which is the desired estimate.
\end{proof}

It is worth noting that an easy analogue of Corollary
\eqref{Corollary} occurs, which can be stated as follows:
\begin{corollary}\label{corollimplicitime}
Assume that $c>0$, $u$ solves the problem
\begin{equation*}
\left\{
\begin{array}
[c]{lll}%
\left(  -\Delta\right)^{\sigma}u+cu=f\left(  x\right)+h(x)   &  & in\text{ }%
\Omega,\\[10pt]
\dfrac{\partial u}{{\partial\nu}}=0 &  & on\text{ }\partial\Omega,
\end{array}
\right. %
\end{equation*}
and let $w$ its Neumann harmonic extension.
Let $g_{1},\,g_{2}$ be two radially decreasing functions on the ball $B$, such that
\[
f_{1}^{\#}+f_{2}^{\#}\prec g_{1}\,\,in\,\,B,
\]
where
\[
f_{1}=(f-\mathsf{m}(f))^{+},\,f_{2}=(f-\mathsf{m}(f))^{-}
\]
and
\[
(h^{+})^{\#}+(h^{-})^{\#}\prec g_{2}.
\]
Let $v$ be the solution to problem \eqref{symmetriz2} with $f_{1}^{\#}+f_{2}^{\#}$ replaced by $g_{1}+g_{2}$. If $\xi$ is the harmonic Neumann extension
of $v$ (namely the solution to \eqref{extensionsymme2} with $f_{1}^{\#}+f_{2}^{\#}$ replaced by $g_{1}+g_{2}$) then the concentration inequality
\eqref{concentrestim} still holds.
\end{corollary}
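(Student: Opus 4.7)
The plan is to reproduce the argument of Theorem~\ref{comparisonc}, modifying only the step where the boundary condition at $z=0$ for the concentration $U(s,z)=\int_{0}^{s}(w_{1}^{\ast}(\tau,z)+w_{2}^{\ast}(\tau,z))\,d\tau$ is derived. The interior inequality
$$-z^{\nu}\frac{\partial^{2}U}{\partial z^{2}}-Q^{-2}s^{2-2/N}\frac{\partial^{2}U}{\partial s^{2}}\leq 0$$
together with the lateral conditions $U(0,z)=\partial_{s}U(|\Omega|/2,z)=0$ rely only on the relative isoperimetric inequality, the coarea formula, the Ferone--Mercaldo formula~\eqref{Ferone-Mercaldo}, the harmonic extension PDE in $\mathcal{C}_{\Omega}$, and the homogeneous Neumann condition on $\partial_{L}\mathcal{C}_{\Omega}$. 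Since these ingredients are unaffected by replacing $f$ with $f+h$, I would carry them over verbatim.

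The new computation is the Neumann-type condition at $z=0$. I plan to repeat the chain~\eqref{zeroorder} with source $f+h$, exploiting again that $|\{w_{i}(\cdot,z)>w_{i}^{\ast}(s,z)\}|=s$ for $i=1,2$ and $s\in(0,|\Omega|/2)$, so that the $\lambda(z)$-contributions cancel. This produces
\begin{align*}
-\frac{\partial U}{\partial z}(s,z) &= -\beta_{\sigma}c\,U(s,z)+\beta_{\sigma}\!\!\int_{\{w_{1}>w_{1}^{\ast}\}}\!(f-\mathsf{m}(f))\,dx-\beta_{\sigma}\!\!\int_{\{w_{2}>w_{2}^{\ast}\}}\!(f-\mathsf{m}(f))\,dx \\
&\quad +\beta_{\sigma}\!\!\int_{\{w_{1}>w_{1}^{\ast}\}}\!h\,dx-\beta_{\sigma}\!\!\int_{\{w_{2}>w_{2}^{\ast}\}}\!h\,dx+\mathcal{R}(z),
\end{align*}
with $\mathcal{R}(z)\to 0$ in $L^{2}(\Omega)$ as $z\to 0^{+}$. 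The $f$-part would be handled just as in Theorem~\ref{comparisonc}, splitting $f-\mathsf{m}(f)=f_{1}-f_{2}$, applying Hardy--Littlewood~\eqref{HardyLit}, and using the first concentration hypothesis $f_{1}^{\#}+f_{2}^{\#}\prec g_{1}$ to dominate by $\beta_{\sigma}\int_{0}^{s}g_{1}^{\ast}(\tau)\,d\tau$. For the $h$-part I would use the pointwise bounds $h\leq h^{+}$ and $-h\leq h^{-}$ to estimate one-sidedly on each level set, then apply Hardy--Littlewood to $h^{+}$ and $h^{-}$ separately, and finally invoke $(h^{+})^{\#}+(h^{-})^{\#}\prec g_{2}$ (noting that the sum of two radially decreasing functions is itself rearranged, so its one-dimensional rearrangement is the pointwise sum of the individual rearrangements). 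Letting $z\to 0^{+}$ I would obtain
$$-\frac{\partial U}{\partial z}(s,0)+\beta_{\sigma}c\,U(s,0)\leq\beta_{\sigma}\!\int_{0}^{s}\!\bigl(g_{1}^{\ast}(\tau)+g_{2}^{\ast}(\tau)\bigr)\,d\tau.$$

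The concentration $V(s,z)=\int_{0}^{s}\xi^{\ast}(\tau,z)\,d\tau$ will satisfy equalities in both the interior equation and this last relation, because $\xi(\cdot,z)$ is radially decreasing, $g_{1}+g_{2}$ is already rearranged on $B$, and on balls all the rearrangement inequalities become equalities. Consequently $\chi:=U-V$ will satisfy the same elliptic inequality as in Theorem~\ref{comparisonc} together with $\partial_{z}\chi(s,0)\geq\beta_{\sigma}c\,\chi(s,0)$ and homogeneous lateral conditions, and Hopf's boundary maximum principle will give $\chi\leq 0$, which is~\eqref{concentrestim}. The only genuinely delicate point I foresee is the $h$-term: because it carries no median subtraction, the $\lambda(z)$-cancellation trick used for $f$ is unavailable, and one is forced to commit to one-sided bounds on each of the two level sets. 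This is precisely the reason why the hypothesis on $h$ is formulated in terms of $(h^{+})^{\#}+(h^{-})^{\#}$, and not via a median-shifted decomposition or $|h|^{\#}$.
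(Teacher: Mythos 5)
Your proposal is correct and follows essentially the same route as the paper: it carries the extra source $h$ through the boundary-condition computation of Theorem~\ref{comparisonc}, bounds the two $h$-integrals one-sidedly by $h^{+}$ and $h^{-}$ and applies Hardy--Littlewood, invokes the two concentration hypotheses, and concludes via the Hopf boundary maximum principle for $\chi=U-V$. The paper compresses exactly this into a single appeal to \eqref{zeroorder} plus ``argue as in the proof of Theorem~\ref{comparisonc},'' so you have simply made the same argument explicit (and, incidentally, supplied a $\beta_{\sigma}$ factor on the $h$-integrals that the paper's displayed inequality drops).
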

\begin{proof}
Just notice that from \eqref{zeroorder} we have
\begin{align}
-\frac{\partial U}{\partial z}(s,z)&=-\beta_{\sigma}c\,U(s,z)+\beta_{\sigma}\int_{\left\{x:w_{1}(x,z)>w_{1}^{\ast}(s,z)\right\}}(f(x)-\mathsf{m}(f))dx
-\beta_{\sigma}\int_{\left\{x:w_{2}(x,z)>w_{2}^{\ast}(s,z)\right\}}(f(x)-\mathsf{m}(f))dx\nonumber\\
&+\beta_{\sigma}\int_{\left\{x:w_{1}(x,z)>w_{1}^{\ast}(s,z)\right\}}h(x)dx-\int_{\left\{x:w_{2}(x,z)>w_{2}^{\ast}(s,z)\right\}}h(x)dx\nonumber\\
&+\int_{\left\{x:w_{1}(x,z)>w_{1}^{\ast}(s,z)\right\}}\mathcal{R}(x,z)dx-\int_{\left\{x:w_{2}(x,z)>w_{2}^{\ast}(s,z)\right\}}\mathcal{R}(x,z)dx\nonumber
\end{align}
thus Hardy-Littlewood inequality \eqref{HardyLit} yields, after passing to the limit as $z\rightarrow0^{+}$
\begin{align*}
-\frac{\partial U}{\partial z}(s,0)&\leq-\beta_{\sigma}c\,U(s,0)+\beta_{\sigma}\left(\int_{0}^{s}f_{1}^{\ast}(\tau)d\tau+\int_{0}^{s}f_{2}^{\ast}(\tau)d\tau\right)\\
&+\int_{0}^{s}(h^{+})^{\ast}(\tau)d\tau+\int_{0}^{s}(h^{-})^{\ast}(\tau)d\tau,
\end{align*}
thus we can argue as in the proof of Theorem \ref{comparisonc}
\end{proof}
\section{Symmetrization for linear fractional parabolic equations with Neumann boundary conditions}\label{parabolic}
Our goal now is to use the elliptic results shown in the previous sections in order to prove a symmetrization result for the following linear,
fractional parabolic Cauchy-Neumann problem
\begin{equation}
\left\{
\begin{array}
[c]{lll}%
u_{t}+(-\Delta)^{\sigma}u =f &  & in\text{
}\Omega\times(0,T)\\[15pt]
\dfrac{\partial u}{\partial\nu}=0 &  & on\text{ }\partial \Omega\times[0,T],\\[15pt]
u(x,0)=u_{0}(x)   & &
in\text{ }\Omega.
\end{array}
\right.  \label{parabolicpro}%
\end{equation}
In this framework, we will always assume that $u_{0}\in L^{2}(\Omega)$, $f\in L^{2}(\Omega\times(0,T))$.\\
It is easy to recast the issue of solving problem \eqref{parabolicpro} in an abstract setting. Indeed, the introduction of the linear operator
$\mathcal{A}_{N}:D(\mathcal{A})\subset H\rightarrow H$, where $H=L^{2}(\Omega)$, defined by
\[
\mathcal{A}_{N}u=(-\Delta_{N})^{\sigma}u
\]
with the fixed domain
\[
D(\mathcal{A}_{N})=H^{1}(\Omega)
\]
allows to reformulate the parabolic problem \eqref{parabolicpro} as the abstract Cauchy problem
\begin{equation}\label{abstract}
\left\{
\begin{array}
[c]{lll}%
u^{\prime}(t)+\mathcal{A}_{N}u(t)=f(t)   &  & on\,[0,T]\\
u(0)=u_{0},
\end{array}
\right.
\end{equation}
where as usual we have set $f(t)(x)=f(x,t)$. The concept of solution to problem \eqref{abstract} (or equivalently to \eqref{parabolicpro}), which suitably
adapts for the use of the elliptic symmetrization arguments proved in the previous sections, is that of \emph{mild} solution, namely a solution which is
obtained as the uniform limit of a time piecewise constant sequence of discrete approximated solutions, defined by means of an implicit time
discretization
scheme. In order to briefly introduce such definition, assume first to  divide the time interval $[0,T]$ in $n$ subintervals
$(t_{k-1},t_{k}]$, where $t_{k}=kh$ and $h=T/n$. Next we consider a time discretization  $\{f_k^{(h)}\}$ of $f$, such that the piecewise constant
interpolation of this  sequence provides a function $f^{(h)}(x,t)$ for which
$\|f-f^{(h)}\|_1\to 0$ as $h\to 0$. We construct then the function $u_{h}$ which is piecewise constant in each interval
$(t_{k-1},t_{k}]$,
by
\begin{equation}\label{approxsolut}
u_{h}(x,t)=
\left\{
\begin{array}
[c]{lll}%
u_{h,1}(x)  &  & if\,\,t\in[0,t_{1}]%
\\[6pt]
u_{h,2}(x)  &  & if\,\,t\in(t_{1},t_{2}]
\\
[6pt]
\cdots
\\
[6pt]
u_{h,n}(x)  &  & if\,\,t\in(t_{n-1},t_{n}]
\end{array}
\right. %
\end{equation}
where $u_{h,k}$ solves the elliptic equation
\begin{equation}
h\mathcal{A}_{N}(u_{h,k})+u_{h,k}=u_{h,k-1}+hf_k^{(h)}\label{discreteequat}
\end{equation}
with the initial value $u_{h,0}=u_{0}$. Then we wish to find that $u_{h}(t)$ converges as $h\rightarrow0$ to a certain function $u(t)$ uniformly in
$[0,T]$, where $u(t)$ is continuous at $t=0$ and $u(t)\rightarrow u_{0}$ as $t\rightarrow0$, namely we would like to prove that $u(t)$ is a \emph{mild
solution} to \eqref{parabolicpro}. The following Lemma gives a positive answer to such question.
\begin{lemma}\label{existence}
There exists a unique mild solution $u$ to problem \eqref{parabolicpro}.
\end{lemma}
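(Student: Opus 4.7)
The plan is to cast problem \eqref{parabolicpro} as an abstract Cauchy problem in the Hilbert space $H=L^{2}(\Omega)$ and invoke the Crandall-Liggett generation theorem, whose hypothesis is the m-accretivity of the operator $\mathcal{A}_N$. Accretivity of $\mathcal{A}_N=(-\Delta_N)^{\sigma}$ is immediate from its spectral form,
\[
\langle\mathcal{A}_N u,u\rangle_{L^{2}(\Omega)}=\sum_{k=1}^{\infty}\lambda_{k}^{\sigma}|\langle u,\varphi_k\rangle_{L^{2}(\Omega)}|^{2}\geq 0,
\]
while m-accretivity reduces to the surjectivity of $I+h\mathcal{A}_N$ from $D(\mathcal{A}_N)$ onto $L^{2}(\Omega)$ for every $h>0$: after dividing by $h$, this is precisely the well-posedness of the stationary problem \eqref{eq.0} with constant $c=1/h>0$ and right-hand side $g/h$ for arbitrary $g\in L^{2}(\Omega)$, which was established in Section \ref{Sec3} through the Lax-Milgram theorem applied to the extension problem \eqref{extenszero}.

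Second, with m-accretivity and the density of $D(\mathcal{A}_N)=H^{1}(\Omega)$ in $L^{2}(\Omega)$ in hand, the Crandall-Liggett exponential formula produces a linear contraction semigroup $\{S(t)\}_{t\geq 0}$ on $L^{2}(\Omega)$ generated by $-\mathcal{A}_N$, characterized by
\[
S(t)u_0=\lim_{n\to\infty}\bigl(I+(t/n)\mathcal{A}_N\bigr)^{-n}u_0
\]
uniformly on compact subintervals of $[0,\infty)$. To account for the forcing term, we rely on the Duhamel representation
\[
u(t)=S(t)u_0+\int_{0}^{t}S(t-s)f(\cdot,s)\,ds,
\]
which is well defined since $f\in L^{2}(\Omega\times(0,T))\subset L^{1}(0,T;L^{2}(\Omega))$, and check that the piecewise constant interpolants $u_h$ defined by \eqref{approxsolut}--\eqref{discreteequat} converge uniformly to $u$ in $C([0,T];L^{2}(\Omega))$, provided the discretizations $f_k^{(h)}$ are chosen so that the associated piecewise constant interpolant $f^{(h)}$ satisfies $\|f^{(h)}-f\|_{L^{1}(0,T;L^{2}(\Omega))}\to 0$ as $h\to 0$ (for example, $f_k^{(h)}=h^{-1}\int_{t_{k-1}}^{t_k}f(\cdot,s)\,ds$).

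Uniqueness and continuous dependence follow at once from the contraction inequality $\|S(t)(u_0-v_0)\|_{L^{2}(\Omega)}\leq\|u_0-v_0\|_{L^{2}(\Omega)}$, while strong continuity of $S(t)$ at $t=0$ yields $u(t)\to u_0$ as $t\to 0^{+}$ in $L^{2}(\Omega)$, so that $u$ qualifies as the unique mild solution to \eqref{parabolicpro}. The only nontrivial obstacle is the nonhomogeneous version of the Crandall-Liggett construction: one has to verify that the specific discrete scheme \eqref{discreteequat} produces a Cauchy sequence in $C([0,T];L^{2}(\Omega))$, which is achieved through the standard comparison between two discretizations with mesh sizes $h_1,h_2$, combined with a priori $L^{2}$ bounds for $u_{h,k}$ obtained by testing \eqref{discreteequat} against $u_{h,k}$ itself and summing over $k$, the accretivity of $\mathcal{A}_N$ being precisely what renders these estimates uniform in $h$.
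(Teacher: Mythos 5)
Your proof is correct and follows essentially the same route as the paper: both verify that $\mathcal{A}_N$ is accretive/monotone via its spectral representation, establish the range condition $R(I+h\mathcal{A}_N)=L^{2}(\Omega)$ by appealing to the elliptic solvability already proven in Section~\ref{Sec3}, and then invoke abstract semigroup theory (for a linear operator in a Hilbert space, m-accretivity and maximal monotonicity coincide) to obtain the mild solution as the uniform limit of the implicit Euler iterates. The paper simply cites \cite[Theorem 10.17]{VazPorous} for this final step, whereas you spell out the Crandall-Liggett exponential formula and the Duhamel representation explicitly, but the underlying argument is the same.
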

\begin{proof}
Since we work in the Hilbert space $H=L^{2}(\Omega)$ for the linear operator\\ $\mathcal{A}_{N}:D(\mathcal{A}_{N})\subset H\rightarrow H$, it is sufficient to show
that $\mathcal{A}_{N}$ is maximal monotone. It is clear that $\mathcal{A_{N}}$ is monotone: indeed, for any $u\in H^{1}(\Omega)$ we have
\[
(\mathcal{A}_{N}u,u)_{L^2(\Omega)}=\|(-\Delta_{N})^{\sigma/2}\|^{2}_{L^{2}(\Omega)}\geq0.
\]
It is straightforward to check that $\mathcal{A}_{N}$ is self-adjoint.
Moreover, $\mathcal{A}$ is maximal monotone, \emph{i.e.} $R(I+\mathcal{A}_{N})=H$, since for any $f\in L^{2}(\Omega)$ there is a unique $u\in H^{1}(\Omega)$
solving (see \cite{StingaVolz}) the equation
\[
u+(-\Delta_{N})^{\sigma}u=f.
\]
Then \cite[Theorem 10.17]{VazPorous} ensures the existence of a unique mild solution $u\in C([0,T];L^{2}(\Omega)$ to the abstract Cauchy problem
\eqref{abstract}, obtained exactly as the uniform limit of the approximated solutions \eqref{approxsolut}.
\end{proof}
\begin{remark}\emph{
If there is no forcing term $f$ in \eqref{parabolicpro}, then the classical Hille-Yosida Theorem implies that for any $u_{0}\in L^{2}(\Omega)$ the mild
solution $u$ to \eqref{parabolicpro} is actually classical, and $u\in C^{1}([0,T];L^{2}(\Omega))\cap  C^{0}([0,T];H^{1}(\Omega))$ (see for instance
\cite{BREZIS}).}
\end{remark}
\begin{remark}\emph{
Keeping tracks of the papers \cite{afractpor,genfrac}, one could make use of the Stinga-Torrea extension method in order to give a proper meaning of
\emph{weak} energy solution to problem \eqref{parabolicpro}, which shows to coincide, when $f\equiv0$, to the unique mild solution obtained in Lemma
\ref{existence}. Nevertheless, in this context we decided to work only with mild solutions, which are enough for our purpose: the problem about the
equivalence of the two notions of solutions, along with several questions posed in a more general nonlinear setting, will be discussed in the forthcoming
paper \cite{BVprox}.}
\end{remark}
With these preliminaries at hand, we are in position to establish the following parabolic comparison result, related to problem \eqref{parabolicpro}.
\begin{theorem}
Assume that $u_{0}\in L^{2}(\Omega)$, $f\in L^{2}(\Omega\times (0,T))$ ($T>0$) and let $u$ be the mild solution to problem \eqref{parabolicpro}; set
\[
u_{1}(\cdot,t)=[u(\cdot,t)-\mathsf{m}(u(\cdot,t))]^{+},\,u_{2}(\cdot,t)=[u(\cdot,t)-\mathsf{m}(u(\cdot,t))]^{-}.
\]
Moreover, let $v$ be the mild solution  to the following Cauchy-Dirichlet problem, which is radially symmetric with respect to $x$:
\begin{equation}
\left\{
\begin{array}
[c]{lll}%
v_{t}+(-\gamma\Delta)^{\sigma}v =(f^{+})^{\#}+(f^{-})^{\#} &  & in\text{
}B\times(0,T)\\[15pt]
v=0 &  & on\text{ }\partial B\times[0,T],\\[15pt]
v(x,0)=u_{0,1}^{\#}(x)+u_{0,2}^{\#}(x)   & &
in\text{ }B,
\end{array}
\right.  \label{symmetrparabolic}%
\end{equation}
where $B$ is the ball centered at the origin with measure $|\Omega|/2$, $(f^{\pm})^{\#}(|x|,t)$ means symmetrization of $f^{\pm}(x,t)$ w.r. to $x$ for a.e.
time
$t>0$
and
\[
u_{0,1}=(u_{0}-\mathsf{m}(u_{0}))^{+},\,u_{0,2}=(u_{0}-\mathsf{m}(u_{0}))^{-}.
\]
Then,
for all $t>0$ we have
\begin{equation}
u_{1}^\#(|x|,t)+u_{2}^\#(|x|,t)\prec v(|x|,t).\label{comparisonparab}
\end{equation}
\end{theorem}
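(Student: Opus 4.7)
The strategy is to exploit the implicit time discretization that defines the mild solution and, at each discrete time step, apply the elliptic concentration comparison of Corollary \ref{corollimplicitime}; concentration comparisons are then transferred to the limit. Fix $n\in\mathbb{N}$, set $h=T/n$, and let $\{f_k^{(h)}\}$ be a time discretization of $f$ such that the piecewise constant interpolant $f^{(h)}$ converges to $f$ in $L^1$. Denote by $\{u_{h,k}\}$ the sequence generated by \eqref{discreteequat}, and by $\{v_{h,k}\}$ the analogous sequence in the ball $B$ obtained via the Dirichlet discretization
\begin{equation*}
h(-\gamma\Delta)^{\sigma}v_{h,k}+v_{h,k}=v_{h,k-1}+h\bigl[((f_{k}^{(h)})^{+})^{\#}+((f_{k}^{(h)})^{-})^{\#}\bigr],\qquad v_{h,0}=u_{0,1}^{\#}+u_{0,2}^{\#}.
\end{equation*}
Each $v_{h,k}$ is radially decreasing thanks to the Schwarz symmetry of the data and the maximum principle for the Dirichlet operator in the ball.

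I would next prove by induction on $k$ the discrete concentration comparison
\begin{equation*}
(u_{h,k})_{1}^{\#}+(u_{h,k})_{2}^{\#}\prec v_{h,k},
\end{equation*}
where $(u_{h,k})_1 = [u_{h,k}-\mathsf{m}(u_{h,k})]^+$ and similarly for $(u_{h,k})_2$. The base case $k=0$ holds with equality. For the inductive step, rewrite \eqref{discreteequat} as the elliptic problem of type \eqref{eq.0}
\begin{equation*}
(-\Delta)^{\sigma}u_{h,k}+\tfrac{1}{h}u_{h,k}=\tfrac{1}{h}u_{h,k-1}+f_{k}^{(h)},
\end{equation*}
with constant zero order coefficient $c=1/h$. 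Then I would apply Corollary \ref{corollimplicitime} with the splitting of the right hand side into a median-treated part $(1/h)u_{h,k-1}$ and a positive/negative-treated part $f_k^{(h)}$, taking $g_1 = v_{h,k-1}/h$ and $g_2 = ((f_k^{(h)})^{+})^{\#}+((f_k^{(h)})^{-})^{\#}$. The hypothesis $f_1^{\#}+f_2^{\#}\prec g_1$ of the corollary is exactly the inductive hypothesis rescaled by $1/h$, using that the median commutes with multiplication by a positive scalar. Rescaling back and invoking uniqueness of the weak solution, the comparison solution produced by the corollary coincides with $v_{h,k}$, closing the induction.

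Finally, I would pass to the limit $h\to 0$. By Lemma \ref{existence} and its analogue on the ball, $u_{h}\to u$ and $v_{h}\to v$ in $C([0,T];L^{2})$. Writing the discrete concentration as
\begin{equation*}
\int_{0}^{s}\bigl[(u_{h,k})_{1}^{\ast}+(u_{h,k})_{2}^{\ast}\bigr]d\tau\leq\int_{0}^{s}v_{h,k}^{\ast}\,d\tau,
\end{equation*}
and using the representation $\int_{0}^{s}g^{\ast}\,d\sigma=\sup_{|E|=s}\int_{E}g\,dx$, which is continuous under $L^{1}$-convergence, yields \eqref{comparisonparab} in the limit.

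The main obstacle I foresee is the inductive step, where one must verify that the auxiliary radial problem produced by Corollary \ref{corollimplicitime} is literally the $k$-th Dirichlet iterate $v_{h,k}$, rather than the solution of some other problem whose forcing would mix $(1/h)u_{h,k-1}$ and $f_k^{(h)}$ through a single common median; this requires the freedom in the splitting allowed by the Corollary and the fact that the median of $(1/h)u_{h,k-1}$ cancels between the two sides of the comparison. A secondary technical point is the continuity of $u\mapsto(u-\mathsf{m}(u))^{\pm}$ along $L^{2}$-convergent sequences, which is delicate when $u$ has a level set of measure $|\Omega|/2$; this can be bypassed by testing $\prec$ through convex functionals via Lemma \ref{lemma1}, or by passing first to a subsequence along which the medians converge.
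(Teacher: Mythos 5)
Your proposal is correct and follows essentially the same route as the paper: implicit time discretization, induction on the discrete steps via Corollary \ref{corollimplicitime}, and passage to the limit $h\to0$ using the uniform $L^2$ convergence of the discrete iterates to the mild solutions. The one place where you add genuinely useful content is the explicit identification of how Corollary \ref{corollimplicitime} is applied at each step: the paper simply asserts that the corollary applies, while you spell out the splitting of the right-hand side $\tfrac1h u_{h,k-1}+f_k^{(h)}$ into a median-treated part and a positive/negative-treated part, with $c=1/h$, $g_1=v_{h,k-1}/h$, $g_2=((f_k^{(h)})^+)^\#+((f_k^{(h)})^-)^\#$, and the observation that the median is homogeneous of degree one in the scaling by $1/h$. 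This is exactly the calculation the paper leaves implicit, and it is the right one. Your final remarks also correctly flag the two technical points glossed over in the paper: the need for $v_{h,k-1}$ to be radially decreasing (which holds because the radial Dirichlet iterate with rearranged data is itself rearranged, by the Talenti-type fractional Dirichlet comparison) and the potential discontinuity of $u\mapsto(u-\mathsf{m}(u))^\pm$ at the limit when a level set of $u$ has measure $|\Omega|/2$; the subsequence argument for the medians is the natural fix. In short, the approach and its execution match the paper, with somewhat more care given to the two delicate verifications.
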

\begin{proof}
Let us consider the sequence of discrete approximated solutions \eqref{approxsolut}. Then applying the implicit time discretization scheme to the
symmetrized
problem \eqref{symmetrparabolic} produces the sequence of discrete solutions $v_{h}$ defined as
\[
v_{h}(x,t)=
\left\{
\begin{array}
[c]{lll}%
v_{h,1}(x)  &  & if\,t\in[0,t_{1}]%
\\
[6pt]
v_{h,2}(x)  &  & if\,t\in(t_{1},t_{2}]
\\
[6pt]
\cdots
\\
[6pt]
v_{h,n}(x)  &  & if\,t\in(t_{n-1},t_{n}],
\end{array}
\right. %
\]
where $v_{h,k}(x)$ solves the equation
\begin{equation}
h\gamma^{1/2}\mathcal{A}_{D}(v_{h,k})+v_{h,k}=v_{h,k-1}+hf_{k,1}^{(h)}+hf_{k,2}^{(h)}\label{discreteequsym}
\end{equation}
with the initial value $v_{h,0}=u_{0,1}^{\#}+u_{0,2}^{\#}$. The operator $\mathcal{A}_{D}:H_{0}^{1}(B)\subset L^{2}(\Omega)\rightarrow L^{2}(\Omega)$ is defined through $\mathcal{A}_{D}=(-\Delta_{D})^{\sigma}$ and we have posed $f_{k,1}^{(h)}=((f_{k}^{(h)})^{+})^{\#},\,f_{k,2}^{(h)}=((f_{k}^{(h)})^{-})^{\#}$.
Then
$v_{h}\rightarrow v$ uniformly, where $v$ is the unique mild solution to \eqref{symmetrparabolic}.\\
In order to compare $u$ with $v$, our aim is now to compare the solution $u_{h,k}$ to \eqref{discreteequat} with the solution $v_{h,k}$ to
\eqref{discreteequsym}. By induction we shall prove that
\begin{equation}
([u_{h,k}-\mathsf{m}(u_{h,k})]^{+})^{\#}+([u_{h,k}-\mathsf{m}(u_{h,k})]^{-})^{\#}\prec v_{h,k},\label{discretecomparison}
\end{equation}
for each $k=1,\ldots,n$. Indeed by Corollary \ref{corollimplicitime} we have
\[
([u_{h,1}-\mathsf{m}(u_{h,1})]^{+})^{\#}+([u_{h,1}-\mathsf{m}(u_{h,1})]^{-})^{\#}\prec v_{h,1}.
\]
If we suppose by induction that \eqref{discretecomparison} holds for $k-1$, we can use Corollary \ref{corollimplicitime} again and get
\eqref{discretecomparison}, which in turns implies
\begin{equation}
([u_{h}-\mathsf{m}(u_{h})]^{+})^{\#}+([u_{h}-\mathsf{m}(u_{h})]^{-})^{\#}\prec v_{h}.\label{discretecomparisoninterp}
\end{equation}
Then passing to the limit in \eqref{discretecomparisoninterp} as $h\rightarrow0^{+}$ the result follows.
\end{proof}
%%%%%%%%%%%%%%%%%%%%%%%%%%%%%%%%%%%%%%%%%%%%%%%%%%%%%

\section{Comments, extensions and open problems}

- It is worth noticing that all the results contained in this paper can be extended to a more general context, for example when the fractional Laplacian operator $\mathcal{L}=(-\Delta)^{\sigma}$ is replaced by a $\sigma$-power of a linear, second order elliptic operator in divergence form, namely defined by
\[
\mathcal{L}=L^{\sigma},
\]
where $\sigma\in(0,1)$, $L$ is defined by \eqref{ellipticop} and its coefficients $a_{ij}$ satisfy the ellipticity condition \eqref{elliptcond}. Indeed, \cite[Theorem 1.1]{Stinga-Torrea} allows to identify $\mathcal{L}$ with the Dirichlet-to-Neumann map defined by the extension problem
\begin{equation*}
\begin{cases}
\dfrac{\partial^{2}w}{\partial y^{2}}+\dfrac{1-2\sigma}{y}\dfrac{\partial\xi}{\partial y}+L_{x}w=0,&\hbox{in}~\mathcal{C}_{\Omega},\\
 \\
 \dfrac{\partial w}{\partial \nu_{x}}=0,&\hbox{on}~\partial_L\mathcal{C}_{\Omega},\\
 \\
 w(x,0)=u(x),&\hbox{on}~\Omega;
\end{cases}
\end{equation*}
moreover, the whole functional setting is carefully detailed in \cite{Caffarelli-Stinga}. Concerning the symmetrization techniques we just notice that, due to \eqref{elliptcond}, the equal sign in \eqref{wekform} is replaced by $\leq$. This allows to interpret Theorem \eqref{comparisontheorem}  as a full nonlocal version of the classical result of Maderna-Salsa \cite{MadSalsa}.

- It would be interesting to consider problems of the form \eqref{eq.0}, where $c$ is \emph{not} constant. Indeed, it is well known that in the local case this simple variation leads to further nontrivial issues, which can be solved by subtle, nonstandard modifications of the main results (see \emph{e.g.} \cite{AlTrMat}). Moreover, it would make sense to adapt our arguments when extra first order terms are added to the left-hand side of the same equation in \eqref{eq.0}. This will be an object of future studies. \\[0.5pt]

%%%%%%%%%%%%%%%%%%%%%%%%%%%%%%%%%%%%%%%%%%%%%%%%%%%%%%%%%%%%%%%%%%%
%%%%%%%%%%%%%%%%%%%%%%%%%%%%%%%%%%%%%%%%%%%%%%%%%%%%%%%%%%%%%%%%%%%

\noindent {\large\bf Acknowledgments}

\noindent B.V. is partially supported by the INDAM-GNAMPA project 2015 ``\textit{Proprietà qualitative di soluzioni di equazioni ellittiche e paraboliche}'' (ITALY).

%%%%%%%%%%%%%%%%%%%%%%%%%%%%%%%%%%%%%%%%%%%%%%%%%%%%%%%%%%%%%%%%%%%%%%%%%%%%%%%%%%%%%%%%%%%%%%%%%%%%%%%%%%%%%%%%%%%%%%%%%%%%%%%%%%%%%%%%%%%%%%%%%%%%%%%%%%%%%%%

%\bibliographystyle{siam}\small
%\bibliography{SymFractNeumbib}

\begin{thebibliography}{10}

\bibitem{AlCianMAz}
{\sc A.~Alvino, A.~Cianchi, V.~G. Maz'ya, and A.~Mercaldo}, {\em Well-posed
  elliptic {N}eumann problems involving irregular data and domains}, Ann. Inst.
  H. Poincar\'e Anal. Non Lin\'eaire, 27 (2010), pp.~1017--1054.

\bibitem{AlLionTrom}
{\sc A.~Alvino, G.~Trombetti, J.~I. Diaz, and P.~L. Lions}, {\em Elliptic
  equations and {S}teiner symmetrization}, Comm. Pure Appl. Math., 49 (1996),
  pp.~217--236.

\bibitem{AlTrMat}
{\sc A.~Alvino, G.~Trombetti, and S.~Matarasso}, {\em Elliptic boundary value
  problems: comparison results via symmetrization}, Ricerche Mat., 51 (2002),
  pp.~341--355 (2003).

\bibitem{Band2}
{\sc C.~Bandle}, {\em On symmetrizations in parabolic equations}, J. Analyse
  Math., 30 (1976), pp.~98--112.

\bibitem{Bandle}
\leavevmode\vrule height 2pt depth -1.6pt width 23pt, {\em Isoperimetric
  inequalities and applications}, vol.~7 of Monographs and Studies in
  Mathematics, Pitman (Advanced Publishing Program), Boston, Mass.-London,
  1980.

\bibitem{Bennett}
{\sc C.~Bennett and R.~Sharpley}, {\em Interpolation of operators}, vol.~129 of
  Pure and Applied Mathematics, Academic Press Inc., Boston, MA, 1988.

\bibitem{BettaNeum}
{\sc M.~F. Betta}, {\em Neumann problem: comparison results}, Rend. Accad. Sci.
  Fis. Mat. Napoli (4), 57 (1990), pp.~41--58 (1991).

\bibitem{SirBonfVaz}
{\sc M.~Bonforte, Y.~Sire, and J.~L. V{\'a}zquez}, {\em Existence, uniqueness
  and asymptotic behaviour for fractional porous medium equations on bounded
  domains}, Discrete Contin. Dyn. Syst., 35 (2015), pp.~5725--5767.

\bibitem{Bramanti}
{\sc M.~Bramanti}, {\em Symmetrization in parabolic {N}eumann problems}, Appl.
  Anal., 40 (1991), pp.~21--39.

\bibitem{ColoradoBrandle}
{\sc C.~Br{\"a}ndle, E.~Colorado, A.~de~Pablo, and U.~S{\'a}nchez}, {\em A
  concave-convex elliptic problem involving the fractional {L}aplacian}, Proc.
  Roy. Soc. Edinburgh Sect. A, 143 (2013), pp.~39--71.

\bibitem{BREZIS}
{\sc H.~Brezis}, {\em Functional analysis, {S}obolev spaces and partial
  differential equations}, Universitext, Springer, New York, 2011.

\bibitem{Cabre-Tan}
{\sc X.~Cabr\'e and J.~Tan}, {\em Positive solutions of nonlinear problems
  involving the square root of the laplacian}, Adv. Math., 224 (2010),
  pp.~2052--2093.

\bibitem{Caffarelli-Silvestre}
{\sc L.~Caffarelli and L.~Silvestre}, {\em An extension problem related to the
  fractional {L}aplacian}, Comm. Partial Differential Equations, 32 (2007),
  pp.~1245--1260.

\bibitem{Caffarelli-Stinga}
{\sc L.~A. Caffarelli and P.~R. Stinga}, {\em Fractional elliptic equations,
  {C}accioppoli estimates and regularity}, Ann. Inst. H. Poincar\'e Anal. Non
  Lin\'eaire, 33 (2016), pp.~767--807.

\bibitem{Chong}
{\sc K.~M. Chong}, {\em Some extensions of a theorem of {H}ardy, {L}ittlewood
  and {P}\'olya and their applications}, Canad. J. Math., 26 (1974),
  pp.~1321--1340.

\bibitem{afractpor}
{\sc A.~de~Pablo, F.~Quir{\'o}s, A.~Rodr{\'{\i}}guez, and J.~L. V{\'a}zquez},
  {\em A fractional porous medium equation}, Adv. Math., 226 (2011),
  pp.~1378--1409.

\bibitem{genfrac}
\leavevmode\vrule height 2pt depth -1.6pt width 23pt, {\em A general fractional
  porous medium equation}, Comm. Pure Appl. Math., 65 (2012), pp.~1242--1284.

\bibitem{dBVol}
{\sc G.~Di~Blasio and B.~Volzone}, {\em Comparison and regularity results for
  the fractional {L}aplacian via symmetrization methods}, J. Differential
  Equations, 253 (2012), pp.~2593--2615.

\bibitem{ValdinoRos}
{\sc E.~Dipierro, S and.~Valdinoci and X.~Ros-Oton}, {\em Nonlocal problems
  with neumann boundary conditions}, preprint arXiv.

\bibitem{Evans}
{\sc L.~C. Evans}, {\em Partial Differential Equations}, vol.~19 of Graduate
  Studies in Mathematics, American Mathematical Society, Providence, RI,
  second~ed., 2010.

\bibitem{Ferone}
{\sc V.~Ferone}, {\em Symmetrization in a {N}eumann problem}, Matematiche
  (Catania), 41 (1986), pp.~67--78 (1989).

\bibitem{FeroneMercaldoStein}
{\sc V.~Ferone and A.~Mercaldo}, {\em Neumann problems and {S}teiner
  symmetrization}, Comm. Partial Differential Equations, 30 (2005),
  pp.~1537--1553.

\bibitem{Gale-Miana-Stinga}
{\sc J.~E. Gal\'e, P.~J. Miana, and P.~R. Stinga}, {\em Extension problem and
  fractional operators: semigroups and wave equations}, J. Evol. Equ., 13
  (2013), pp.~343--368.

\bibitem{Gilbarg-Trudinger}
{\sc D.~Gilbarg and N.~S. Trudinger}, {\em Elliptic Partial Differential
  Equations of Second Order}, Classics in Mathematics, Springer-Verlag, Berlin,
  2001.
\newblock Reprint of the 1998 edition.

\bibitem{Hardy}
{\sc G.~H. Hardy, J.~E. Littlewood, and G.~P{\'o}lya}, {\em Inequalities},
  Cambridge, at the University Press, 1952.
\newblock 2d ed.

\bibitem{Kesavan}
{\sc S.~Kesavan}, {\em Symmetrization \& applications}, vol.~3 of Series in
  Analysis, World Scientific Publishing Co. Pte. Ltd., Hackensack, NJ, 2006.

\bibitem{MadSalsa}
{\sc C.~Maderna and S.~Salsa}, {\em Symmetrization in {N}eumann problems},
  Applicable Anal., 9 (1979), pp.~247--256.

\bibitem{PellacciMontef}
{\sc E.~Montefusco, B.~Pellacci, and G.~Verzini}, {\em Fractional diffusion
  with {N}eumann boundary conditions: the logistic equation}, Discrete Contin.
  Dyn. Syst. Ser. B, 18 (2013), pp.~2175--2202.

\bibitem{Mossino}
{\sc J.~Mossino and J.-M. Rakotoson}, {\em Isoperimetric inequalities in
  parabolic equations}, Ann. Scuola Norm. Sup. Pisa Cl. Sci. (4), 13 (1986),
  pp.~51--73.

\bibitem{VazVolSire}
{\sc Y.~Sire, J.~L. V{\'a}zquez, and B.~Volzone}, {\em \it symmetrization for
  fractional elliptic and parabolic equations and an isoperimetric
  application}, In press.

\bibitem{Stinga-Torrea}
{\sc P.~R. Stinga and J.~L. Torrea}, {\em Extension problem and {H}arnack's
  inequality for some fractional operators}, Comm. Partial Differential
  Equations, 35 (2010), pp.~2092--2122.

\bibitem{StingaVolz}
{\sc P.~R. Stinga and B.~Volzone}, {\em Fractional semilinear neumann problems
  arising from a fractional {K}eller-{S}egel model}, Calc. Var. Partial
  Differential Equations, in press (2015).

\bibitem{Talenti1}
{\sc G.~Talenti}, {\em Elliptic equations and rearrangements}, Ann. Scuola
  Norm. Sup. Pisa Cl. Sci. (4), 3 (1976), pp.~697--718.

\bibitem{Talentirearrinv}
\leavevmode\vrule height 2pt depth -1.6pt width 23pt, {\em Inequalities in
  rearrangement invariant function spaces}, in Nonlinear analysis, function
  spaces and applications, {V}ol.\ 5 ({P}rague, 1994), Prometheus, Prague,
  1994, pp.~177--230.

\bibitem{Vsym82}
{\sc J.~L. V{\'a}zquez}, {\em Sym\'etrisation pour {$u_{t}=\Delta \varphi (u)$}
  et applications}, C. R. Acad. Sci. Paris S\'er. I Math., 295 (1982),
  pp.~71--74.

\bibitem{VANS05}
\leavevmode\vrule height 2pt depth -1.6pt width 23pt, {\em Symmetrization and
  mass comparison for degenerate nonlinear parabolic and related elliptic
  equations}, Adv. Nonlinear Stud., 5 (2005), pp.~87--131.

\bibitem{VazPorous}
\leavevmode\vrule height 2pt depth -1.6pt width 23pt, {\em The porous medium
  equation}, Oxford Mathematical Monographs, The Clarendon Press, Oxford
  University Press, Oxford, 2007.
\newblock Mathematical theory.

\bibitem{VazVol1}
{\sc J.~L. V{\'a}zquez and B.~Volzone}, {\em Symmetrization for linear and
  nonlinear fractional parabolic equations of porous medium type}, J. Math.
  Pures Appl. (9), 101 (2014), pp.~553--582.

\bibitem{VazVol2}
\leavevmode\vrule height 2pt depth -1.6pt width 23pt, {\em Optimal estimates
  for fractional fast diffusion equations}, J. Math. Pures Appl. (9), 103
  (2015), pp.~535--556.

\bibitem{BVprox}
{\sc B.~Volzone}, {\em On {N}eumann problems for nonlinear fractional parabolic
  equations of porous medium type}, in preparation.

\end{thebibliography}

%
2000 \textit{Mathematics Subject Classification.}
35B45,  % 	A priori estimates
35R11,   	% Fractional partial differential equations
35K20. %Initial-boundary value problems for second-order parabolic equations

%%%%%%%%%%%%%%%%%%%%%%%%%%%%%%%%%%%%%%%%%%%%%%%%%%%%%%%%%%%%%%%%%

%
\textit{Keywords and phrases.} Symmetrization, fractional Laplacian,
Neumann problems, nonlocal elliptic and parabolic equations.

\end{document}